\documentclass[A4paper,11pt]{amsart}
\usepackage[utf8]{inputenc}
\usepackage[english]{babel}
\usepackage{multirow}
\usepackage{anysize}
\usepackage{color}
\usepackage{multicol}
\usepackage[colorlinks=true,
            linkcolor=blue,
            citecolor=blue,
            urlcolor=blue]{hyperref}

\usepackage{array} 
\usepackage{lscape}
\usepackage{tikz-cd}
\usepackage{float}
\usepackage{standalone}
\usepackage{subcaption}
\usepackage{tikz}
\usepackage{pgfplots}
\usetikzlibrary{decorations.markings, calc, intersections, external, backgrounds, cd, fillbetween, arrows.meta}
\tikzset{math to/.tip={Glyph[glyph math command=rightarrow]}}
\tikzcdset{arrow style=tikz,
           diagrams={>=stealth}
           }
\newcommand{\Z}{\mathbb{Z}} %Numeros Enteros
\usepackage{multirow, array}
\usepackage{amsmath}

\DeclareMathOperator{\Sym}{Sym}

\DeclareMathOperator{\Pic}{Pic}

\DeclareMathOperator{\mult}{mult}

\DeclareMathOperator{\Nm}{Nm}
\DeclareMathOperator{\ima}{Im}

\DeclareMathOperator{\lcm}{lcm}

\DeclareMathOperator{\type}{type}
\DeclareMathOperator{\Hom}{Hom}

\DeclareMathOperator{\ptype}{type}

\usepackage{enumitem}
\usepackage{amsfonts}
\usepackage{amsmath,calligra}
\usepackage{amssymb}
\usepackage[arrow, matrix, curve, xdvi, dvips]{xy}

\usepackage{blkarray}
\usepackage{booktabs}
\usepackage{mathtools}
\usepackage{mathrsfs}
\usepackage{times}

\usepackage{graphicx}
\usepackage[left=2cm,right=2cm, top=3cm, bottom=3cm]{geometry}
\usepackage[usenames,dvipsnames]{pstricks}
 \usepackage{pstricks-add}
 \usepackage{epsfig}

\usepackage{amsthm}
\newtheorem{theorem}{Theorem}[section]

\newtheorem{corollary}[theorem]{Corollary}

\newtheorem{lemma}[theorem]{Lemma}

\newtheorem{proposition}[theorem]{Proposition}
\newtheorem{remark}[theorem]{Remark}

\title{Prym--Tyurin varieties coming from intermediate coverings of curves}

\author{V\'{i}ctor Valdebenito Sep\'ulveda}

\address{Departamento de Matem\'atica y Estad\'istica, Universidad de La Frontera, Av. Francisco Salazar 01145 Temuco, Chile.}

\subjclass[2020]{14H40, 14H30, 14K10, 14K05.}

\thanks{This work has been supported by ANID BECAS/DOCTORADO NACIONAL 21221502}

\keywords{Abelian variety, Prym-Tyurin variety}

\email{victor.valdebenito@ufrontera.cl}

\begin{document}
\maketitle

\begin{abstract}
We introduce a new geometric construction of Prym--Tyurin varieties of odd prime exponent $p$ arising from intermediate coverings of $\mathbb Z_p\times\mathbb Z_p$--Galois covers of smooth curves. We determine precisely when these coverings give rise to Prym--Tyurin varieties, namely when the base curve is elliptic and in the case of isotropic \'etale covers over curves of genus $2$. The explicit nature of the construction makes it possible to analyze the geometry of the associated Prym--Tyurin varieties and to prove that the corresponding Prym--Tyurin maps, indexed by the choice of a generating vector, are generically finite onto their images.
\end{abstract}

\section{Introduction}

A classical approach to the study of the moduli space $\mathcal A_g$ of principally polarized abelian varieties (ppavs) is through the geometry of curves. Two fundamental examples arising in this way are Jacobian varieties and classical Prym varieties. This perspective is particularly effective in low dimensions, where the rich theory of curves allows one to describe large portions of $\mathcal A_g$ in geometric terms. Indeed, it is well known that for $g\le 3$ the Torelli map $t:\mathcal M_g\to \mathcal A_g$ is dominant, so that the Jacobian locus is dense in $\mathcal A_g$. Moreover, for $g\le 5$ a general ppav is a classical Prym variety, as the Prym map $\mathcal P:\mathcal R_{g+1}\to \mathcal A_g$ is dominant.

Motivated by this picture, Beauville \cite{beauville_survey} raised the problem of finding a stratification
$$
\mathcal J_g \subset \overline{\mathcal P}_g \subset \cdots \subset \mathcal A_g
$$
by geometrically defined subvarieties, extending the role played by Jacobians and Prym varieties. This led to the introduction of Prym--Tyurin varieties, first studied by Tyurin \cite{tyurin}, as abelian subvarieties of Jacobians whose induced polarization is a multiple of a principal polarization.

More precisely, a principally polarized abelian variety $(P,\Xi)$ is called a Prym--Tyurin variety of exponent $\varepsilon$ if there exists a smooth projective curve $C$ and an embedding $P\hookrightarrow JC$ such that the restriction of the canonical polarization satisfies $\Theta_P \equiv \varepsilon\,\Xi$.

Prym--Tyurin varieties provide a natural extension of Jacobians and classical Pryms. By the Matsusaka--Ran theorem \cite[11.8.1]{BL}, exponent $1$ corresponds exactly to Jacobians, while Welters showed in \cite{Welters} that exponent $2$ recovers classical Prym varieties. In the same work, Welters proved that every ppav is a Prym--Tyurin variety of some exponent. Moreover, combining Welters’ criterion with the geometry of curves on the Kummer variety, one obtains the general bound that any ppav of dimension $g$ is a Prym--Tyurin variety of exponent at most $2^{g-1}(g-1)!$.

Despite this general existence result, a central problem remains to construct Prym--Tyurin varieties of small exponent, and in particular of exponent at least $3$, far below the bound given by Welters. Only a limited number of explicit examples are known. Constructions due to Kanev \cite{kanev}, based on correspondences, as well as subsequent generalizations \cite{LRR,Salomon,CLRR}, produce important families, but remain restricted in scope. More recently, examples of Prym--Tyurin varieties of small exponent have been obtained as isotypical components of Jacobians of curves with group action \cite{LR_isotypical}, using in part computational techniques.

In this work, we introduce a new geometric construction of Prym--Tyurin varieties based on intermediate coverings. This approach does not rely on correspondences and allows us to produce explicit families of Prym--Tyurin varieties of odd prime exponent $p$, far below the general bound predicted by Welters.

More precisely, let $p$ be an odd prime and let $f:\widetilde C \to C$ be a Galois covering with group $G_p\simeq \mathbb Z_p \times \mathbb Z_p$. Fix a subgroup $\langle g \rangle \subset G_p$ of order $p$, set $C_g=\widetilde C/\langle g \rangle$, and denote by $k_g:\widetilde C \to C_g$ and $h_g:C_g \to C$ the induced intermediate coverings, so that $f = h_g \circ k_g$. We consider the Prym variety $P(h_g)$ and its pullback
$$
P = k_g^*P(h_g) \subset J\widetilde C.
$$

Our first main result gives a geometric criterion ensuring that the associated abelian subvariety $P$ is a Prym--Tyurin variety of exponent $p$.

\begin{theorem}
Let $p$ be an odd prime. For a $G_p$--Galois covering $f:\widetilde C \to C$ satisfying the assumptions of Section \ref{section4}, the associated abelian subvariety $P = k_g^*P(h_g)$ is a Prym--Tyurin variety of exponent $p$.
\end{theorem}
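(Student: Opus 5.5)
The plan is to use the standard Kanev-type criterion. An abelian subvariety $P\subset J\widetilde C$ is a Prym--Tyurin variety of exponent $e$ exactly when $P$ is the image of an endomorphism $u$ (a correspondence) of $J\widetilde C$ that is symmetric for the Rosati involution and satisfies $u^2=e\,u$. Equivalently, the restriction of the canonical polarization $\Theta$ of $J\widetilde C$ to $P$ has type $(e,\dots,e)$, which means the exponent of the induced polarization equals $e$ and it is a multiple of a principal one.

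So I will compute the type of $\Theta|_P$ directly. I will use the norm endomorphism of $P$ together with the factorization $f=h_g\circ k_g$. I will also use the group algebra decomposition of $J\widetilde C$ under $G_p$: up to isogeny,
\begin{equation*}
J\widetilde C \sim JC\times\prod_{H} P(\widetilde C/H\to C),
\end{equation*}
where $H$ runs over the $p+1$ subgroups of order $p$.

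First step: describe $P(h_g)$ and its polarization. Since $h_g:C_g\to C$ is cyclic of degree $p$, the restriction of $\Theta_{C_g}$ to $P(h_g)$ has type determined by the ramification. By the result recalled for cyclic covers, under the assumptions of Section \ref{section4} it should be of type $(1,\dots,1,p,\dots,p)$, with the number of $1$'s controlled by the branch data; when $h_g$ is étale it is $(1,p,\dots,p)$. Second step: pull back by $k_g$. The restriction satisfies $(k_g^*)^*\Theta_{\widetilde C}\equiv p\,\Theta_{C_g}$. The kernel of $k_g^*$ on $JC_g$ is trivial if $k_g$ is ramified, and has order $p$ if $k_g$ is étale. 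Restricted to $P(h_g)$, this gives that $\Theta_{\widetilde C}|_P$ pulls back to $p\cdot\Theta_{C_g}|_{P(h_g)}$ via the isogeny $k_g^*|_{P(h_g)}:P(h_g)\to P$.

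Third step, which I expect to be the main obstacle: show that the kernel $K=\ker(k_g^*)\cap P(h_g)$ exactly cancels the non-principal part. We have $p\,\Theta_{C_g}|_{P(h_g)}$ of type $(p,\dots,p,p^2,\dots,p^2)$, and descending along an isogeny with kernel $K$ that is isotropic for the corresponding Weil pairing divides the type. To end with $(p,\dots,p)$, the following must hold:
\begin{itemize}
\item $K$ has order $p^{m}$, where $m$ is the number of $p$'s in the type of $\Theta_{C_g}|_{P(h_g)}$;
\item $K$ sits inside the $p$-part of $K(\Theta_{C_g}|_{P(h_g)})$ as a maximal isotropic piece.
\end{itemize}
This is exactly where the hypotheses (elliptic base, or isotropic étale over genus $2$) enter. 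In the elliptic case $P(h_g)=\ker\Nm$, the polarization has type $(1,\dots,1,p)$, and the relevant $K$ comes from $h_g^*JC[p]$-type contributions.

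To control $K$, I will identify $\ker k_g^*$ through the covering data. For an étale $k_g$ it is generated by the line bundle defining $k_g$. I will compare it with $K(\Theta|_{P(h_g)})=P(h_g)\cap h_g^*JC$, which is $h_g^*(JC[p])$ modulo the kernel of $h_g^*$. In the isotropic genus $2$ case, the Weil-pairing isotropy of the subgroup of $JC[p]$ defining $f$ gives the needed isotropy.

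Finally, I will check the type by counting degrees, comparing $\deg(\Theta|_P)=\chi^2$ with $p^{2\dim P}$. Alternatively, I can exhibit $u=k_g^*\circ\Nm_{k_g}$ restricted suitably, namely $u=\sum_{i}\sigma^i-\ldots$ in the group algebra, and verify $u^2=pu$ together with $\ima u=P$. This yields the exponent $p$.
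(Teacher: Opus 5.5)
Your main line is the paper's: pull back along $u_g=k_g^*|_{P(h_g)}:P(h_g)\to P$, use $u_g^*\widetilde\Theta_P\equiv p\,\Theta'_{P(h_g)}$, and show that $\ker u_g$ has order $p$ and is maximal isotropic in $K(\Theta'_{P(h_g)})\simeq(\mathbb Z/p)^2$. Then $\Theta'_{P(h_g)}$ descends to a principal $M_g$ and $\widetilde\Theta_P=M_g^{\otimes p}$.

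\textbf{Main gap.} You stop exactly where the content of the theorem lies. You never prove that the class $\eta_g$ defining $k_g$ lies in $h_g^*JC\cap P(h_g)=K(\Theta'_{P(h_g)})$. Saying it ``comes from $h_g^*JC[p]$-type contributions'' is an assertion, not an argument. The step is not a formality:
\begin{itemize}
\item if $\eta_g\notin P(h_g)$, then $u_g$ is an isomorphism and $\widetilde\Theta_P$ has type $(p,\dots,p,p^2)$;
\item if $\eta_g\in P(h_g)\setminus h_g^*JC$, then $\Theta'_{P(h_g)}$ does not descend and $\widetilde\Theta_P$ has exponent $p^2$.
\end{itemize}
In the elliptic case the missing idea is to use the \emph{other} intermediate cover. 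All local monodromies lie in $\langle a\rangle$, so for $g\neq a$ the cover $k_g$ is \'etale and $h_g$ is ramified. Hence $h_g^*$ is injective and $\ker\Nm_{h_g}$ is connected. On the other hand, $f$ also factors through the \'etale cover $h_a:C_a\to E$, so $\ker f^*\neq 0$. Since $h_g^*(\ker f^*)=\ker k_g^*\cap h_g^*JE$, this forces $\eta_g=h_g^*\xi$ with $\xi\in JE[p]$. Then $\Nm_{h_g}(\eta_g)=p\xi=0$ puts $\eta_g$ in $P(h_g)$. Only after this is maximal isotropy automatic: any order-$p$ subgroup of $(\mathbb Z/p)^2$ is Lagrangian for a non-degenerate alternating pairing. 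Also, the order count in your last step cannot by itself separate $(p,\dots,p)$ from, e.g., $(1,p,\dots,p,p^2)$. You need the descent, or the exponent bound combined with Lemma~\ref{lemma:kernel} as in Theorem~\ref{lemma:PTlemma}.

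\textbf{Genus-$2$ case.} Two of your statements are wrong, and they hide where the hypothesis enters.
\begin{itemize}
\item For $h$ \'etale cyclic of degree $p$ over a genus-$2$ curve, $\Theta'_{P(h)}$ has type $(1,\dots,1,p)$ with a single $p$, not $(1,p,\dots,p)$; these differ for $p\ge 5$. A single $p$ is exactly what lets a cyclic kernel be maximal isotropic.
\item By Proposition~\ref{prop:kernel_cover}, $P(h)\cap h^*JC=h^*(\langle\xi_0\rangle^{\perp})$, which has order $p^2$, where $\xi_0$ generates $\ker h^*$. It is not $h^*(JC[p])$ modulo $\ker h^*$, which has order $p^3$.
\end{itemize}
Because $h$ is \'etale, $\ker\Nm_h$ is disconnected and the norm argument above fails. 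The isotropy of $\ker f^*=\langle\xi,\xi_0\rangle$ is not used to make $\langle\eta\rangle$ isotropic, which is automatic for a cyclic group. It is used to guarantee that $\eta=h^*\xi$ lies in $P(h)$ at all, i.e.\ $\xi\in\langle\xi_0\rangle^{\perp}$. The paper takes this from \cite[Lemma 2.10]{BoSh25}.

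\textbf{Opening criterion.} The criterion you start from is false without Kanev's hypotheses (an effective fixed-point-free correspondence). For instance, $u=e\cdot\mathrm{id}_{J\widetilde C}$ is symmetric with $u^2=eu$, yet its image carries a principal polarization. So the alternative ending via $u^2=pu$ proves nothing as stated. Moreover, $k_g^*\circ\Nm_{k_g}=\sum_{i=0}^{p-1}g^i$ has image $k_g^*JC_g$, not $P$.
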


In Section \ref{section4} we determine the geometric situations in which this construction produces Prym--Tyurin varieties of exponent $p$. More precisely, we prove that this happens for $G_p$--coverings over elliptic curves with prescribed ramification, and also for isotropic étale coverings over curves of genus $2$. By contrast, the obstruction established in Section \ref{section3} shows that, when the base curve has genus at least $3$, this construction never produces a Prym--Tyurin variety, whether $h$ is étale or ramified.

In particular, this construction produces families of Prym--Tyurin varieties parametrized by the corresponding Hurwitz spaces of $\mathbb Z_p \times \mathbb Z_p$--covers.

A key feature of this construction is its explicit geometric nature, which makes it possible to analyze the geometry of the associated Prym--Tyurin varieties in detail. We focus on the case in which the base curve is an elliptic curve $E$. Let $h:C\to E$ be a cyclic covering of degree $p$ ramified at $m$ points. Fixing a $p$--torsion point $\xi\in E[p]$, we construct an \'etale covering $k:\widetilde C\to C$, which produces the Prym--Tyurin variety $P=k^*P(h)$.

For each choice of generating vector associated with the cyclic covering $h:C\to E$ of degree $p$ with the prescribed ramification data, this construction naturally defines a Prym--Tyurin map
$$
\mathcal{PT}_{p,m}:\mathcal R(p,m)^\xi\longrightarrow \mathcal A_{\frac{m}{2}(p-1)},
$$
which assigns to a pair $(h,\xi)$ the associated Prym--Tyurin variety. Our second main result concerns the geometry of these maps.

\begin{theorem}
For every odd prime $p\ge 5$, the Prym--Tyurin map $\mathcal{PT}_{p,m}$ is generically finite onto its image for $m\ge 2$; and for $p=3$, the map is generically finite for $m\ge 4$.
\end{theorem}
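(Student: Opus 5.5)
The plan is to follow the standard strategy for Prym-type maps: show that the codifferential of $\mathcal{PT}_{p,m}$ is surjective at a generic point. Equivalently, the differential is injective there, which gives generic finiteness onto the image.

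\textbf{Dimension count.} The space $\mathcal R(p,m)^\xi$ is finite over the Hurwitz space of cyclic degree $p$ covers of elliptic curves branched at $m$ points, with fixed generating vector. Its dimension is therefore $1+m$: one modulus for $E$, plus $m$ branch points, minus nothing, since translations of $E$ are absorbed by the choice of origin. The target is $\mathcal A_{g}$ with $g=\frac{m}{2}(p-1)$, whose dimension is $g(g+1)/2$. The inequality $m+1\le \frac{m(p-1)}{4}\big(\frac{m(p-1)}{2}+1\big)$ holds in the stated ranges. For $p=3$ and $m=2$ we get $g=2$, and $3\le 3$ holds but is borderline. This is consistent with the theorem excluding small $m$ for $p=3$, presumably because injectivity fails there for geometric reasons.

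\textbf{Cotangent spaces and the codifferential.} Identify $T_0P=T_0P(h)$ via $k^*$, which is an isogeny onto its image. The tangent space of $P(h)$ is the sum of the nontrivial character eigenspaces $H^0(C,\omega_C)^{\chi}$, $\chi\neq 1$, of the $\mathbb Z_p$-action. The cotangent space of $\mathcal A_g$ at $P$ is $\Sym^2 T_0P^\vee$. The cotangent space of the Hurwitz space at $h$ is the invariant part $H^0(C,\omega_C^{2}(R))^{\mathbb Z_p}$, which is $H^0(E,\omega_E^2(B))$ with $B$ the branch divisor. Since $P=k^*P(h)$ and $k$ is étale, the Prym–Tyurin polarization is $p$ times a principal polarization compatible with the one induced from $C$. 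The codifferential is then the multiplication map
\[
\mu:\ \Big(\Sym^2 \bigoplus_{\chi\ne1} H^0(C,\omega_C)^{\chi}\Big)^{\mathbb Z_p}\longrightarrow H^0(C,\omega_C^{2})^{\mathbb Z_p}\subset H^0(C,\omega_C^2(R))^{\mathbb Z_p},
\]
restricted to the invariant part, since only $\mathbb Z_p$-invariant symmetric tensors pair nontrivially with invariant deformations. The invariant part of the symmetric square is spanned by products $H^0(\omega_C)^{\chi}\cdot H^0(\omega_C)^{\chi^{-1}}$. Using the eigensheaf decomposition $h_*\omega_C=\omega_E\oplus\bigoplus_{\chi\ne 1}\omega_E\otimes L_\chi$ with $\deg L_\chi=d_\chi>0$ and $\sum_\chi d_\chi=m(p-1)/2$, these products become multiplication maps on $E$:
\[
H^0(E,L_\chi)\otimes H^0(E,L_{\chi^{-1}})\to H^0(E,L_\chi\otimes L_{\chi^{-1}}),\qquad L_\chi\otimes L_{\chi^{-1}}=\mathcal O_E(B_\chi),
\]
where $B_\chi$ is the part of the branch divisor seen by the pair $(\chi,\chi^{-1})$. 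I must also account for the twist by $\omega_E\cong\mathcal O_E$ and compare $\omega_C^2$ with $\omega_C^2(R)$ on the invariant part. It then suffices to show that the sum over pairs $\{\chi,\chi^{-1}\}$ of the images spans a space of dimension $m+1$, namely the full $H^0(E,\mathcal O_E(B))$ plus the $E$-modulus direction.

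\textbf{Main obstacle.} The hard step is this surjectivity. On an elliptic curve, multiplication $H^0(L)\otimes H^0(M)\to H^0(L\otimes M)$ is surjective once $\deg L,\deg M\ge 2$ and $L\not\cong M$, or both degrees are at least 3, by Mumford's results on elliptic normal curves and Castelnuovo-type arguments. For $p\ge5$ there are at least two pairs $\{\chi,\chi^{-1}\}$, and the degrees $d_\chi$ vary with the generating vector. I would choose a pair where both degrees are at least 2 and vary the others to cover all of $B$. If that fails, I would argue by degeneration to a boundary point of $\mathcal R(p,m)^\xi$: collide branch points, or degenerate $E$ to a nodal curve and use a compactified Prym–Tyurin map, reducing to smaller $m$ and inducting on $m$. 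For $p=3$ there is only one pair, with $d_\chi+d_{\chi^{-1}}=m$. Surjectivity onto an $(m+1)$-dimensional space then requires both degrees to be reasonably large, which explains the threshold $m\ge4$.
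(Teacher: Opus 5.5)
Your framework is the one the paper uses. The codifferential is a sum, over complementary pairs $\{i,p-i\}$, of multiplication maps $H^0(E,M_i)\otimes H^0(E,N_i)\to H^0(E,M_i\otimes N_i)$ on $E$, and one must show that the images span. However, the setup has an error that makes your stated goal unreachable.

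The space $\mathcal R(p,m)^\xi$ has dimension $m$, not $m+1$. The base is a genus-one curve with no distinguished origin, so translations of $E$ must be quotiented out (equivalently, put $x_1$ at the origin), and $\mathcal R(p,m)\to\mathcal M_{1,m}$ is finite. Your own identification of the cotangent space with $H^0(E,\omega_E^{2}(D))\cong H^0(E,\mathcal O_E(D))$, where $D=x_1+\dots+x_m$, already gives dimension $m$. That space includes the modulus of $E$; there is no extra ``$E$-modulus direction''. Since every multiplication map lands in the $m$-dimensional space $H^0(E,\mathcal O_E(D))$, asking the images to span an $(m+1)$-dimensional space is impossible.

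Relatedly, $M_i\otimes N_i\cong\mathcal O_E(D)$ for \emph{every} $i$, not only for ``part of'' the branch divisor. Because $p$ is prime, every branch point is totally ramified, and $\lfloor ic_j/p\rfloor+\lfloor (p-i)c_j/p\rfloor=c_j-1$ for all $j$. This common target is exactly what allows the images of different pairs to be added.

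The substantive gap is that the surjectivity step, which is the whole content of the theorem, is not carried out.

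For $m\ge4$ you need a lemma guaranteeing that some pair has both degrees $\ge2$. This is not automatic. One has $M_1=L$ with $1\le\deg L\le m-1$. If $\deg L=1$, use $\deg M_{i+1}\le\deg M_i+1$ together with $\deg M_i+\deg N_i=m$ to find $i\le(p-1)/2$ with $\deg M_i=2$. For $m=4$ you must also check that $N_i\otimes M_i^{-1}\not\cong\mathcal O_E$ for general data before applying the base-point-free pencil trick.

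For $p\ge5$ and $m\in\{2,3\}$, \emph{no} pair has both degrees $\ge2$: the degrees are $(1,1)$, resp.\ $\{1,2\}$. So your primary strategy fails exactly there, and the degeneration/induction fallback is not an argument (it would in any case need these cases as its base).

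The missing idea is elementary. A pair containing a degree-one bundle $\mathcal O_E(y)$ has as image the hyperplane of sections of $\mathcal O_E(D)$ vanishing at $y$. For $p\ge5$ there are at least two pairs, and for general branch data two of them give different hyperplanes, which together span $H^0(E,\mathcal O_E(D))$.
\begin{itemize}
\item For $m=3$ this holds once the two points $y$ differ, since $\mathcal O_E(D)$ is very ample.
\item For $m=2$, normalize $B=x_1+(p-1)x_2$. The lines coming from $M_1=L$ and $M_2=L^{2}(-x_2)$ coincide only if $M_1\cong M_2$ or $M_1\otimes M_2\cong\mathcal O_E(D)$. These force $x_1=x_2$ or $(p-3)(x_1-x_2)\sim0$, both avoided generically.
\end{itemize}
This also explains the restriction for $p=3$: there is a single pair, and for $m\le3$ its image is a proper subspace. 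The exclusion is not a borderline dimension count.
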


The result is a consequence of the generic finiteness of the Prym--map of elliptic ramified covers, which can be described in terms of multiplication maps of sections on the base elliptic curve. We use the notation $\mathcal{PT}_{p,m}$ after fixing the discrete choice of generating vector.

The paper is organized as follows. In Section \ref{section2} we recall background on polarized abelian varieties, coverings of curves, and Prym varieties. Section \ref{section3} develops the Prym--Tyurin construction via intermediate coverings and establishes the criterion for exponent $p$. Section \ref{section4} applies this framework to $\mathbb Z_p\times\mathbb Z_p$--covers and proves the existence of the corresponding families. Finally, Section \ref{sec5} studies the associated maps and proves their generic finiteness.

\subsection*{Acknowledgments}
The author is deeply grateful to Rubí E. Rodríguez for her guidance throughout the development of this work, which forms part of his doctoral thesis. He also thanks Ángel Carocca and Robert Auffarth for their invaluable insights, which greatly enriched this research, and Juan Carlos Naranjo for his hospitality in Barcelona, where part of this work was carried out, and for many fruitful discussions.

\section{Preliminaries}\label{section2}

In this section we recall some known results and fix notation concerning abelian varieties and Galois covers of curves, which will be used throughout the paper.

\subsection{Abelian varieties and polarizations} Let $(A, L)$ be a polarized abelian variety with associated isogeny $\phi_L : A \to \widehat{A}$ where $\widehat A$ denotes the dual variety of $A$. For any subvariety $B$ of $A$, denote by $\imath_B:B\hookrightarrow A$ the canonical embedding,  $L_B$ the induced polarization on $B$ by $L$, $e_B:=\exp K(L_B)$ the exponent of the induced polarization on $B$, $B[n]$ the $n$--torsion points of $B$,  and denote by $K(L)$ the kernel of the isogeny $\phi_L$. For any polarization $L$ on an abelian variety $A$ of dimension $g$ it holds that
$$K(L)\simeq (\mathbb{Z}_{d_1})^2\times\dots\times (\mathbb{Z}_{d_g})^2,$$

where the integers $d_1,\dots,d_g\geq 1$ satisfy that $d_j \mid d_{j+1}$ for $j\in\{1,\dots,g-1\}$. The tuple of positive integers $(d_1,\dots,d_g)$ is called the {\it type} of the polarization, sometimes denoted by $\type (L)$. In the case that $\type(L)=(1,\dots,1)$, we have that $\phi_L$ is an isomorphism and we say that $L$ is a principal polarization of $A$.

Recall that for any abelian subvariety $B$ of $A$ there exists the norm endomorphism denoted by $N_B$, given by the composition
$$
\begin{tikzcd}
 A\ar[r,"\phi_L"] &\widehat{A}\ar[r,"\hat{\imath}_B"]&\widehat{B}\ar[r,"\psi_{L_B}"]& B\ar[r,"\imath_B"]& A   
\end{tikzcd}
$$
where $\psi_{L_B}=e_B\cdot\phi_{L_B}^{-1}:\widehat{B}\to B$ and $\phi_{L_B}^{-1}$ is the inverse isogeny of $\phi_{L_B}$ in $\Hom(\widehat{B},B)\otimes \mathbb{Q}$.

In the following, let $(B,P)$ be a pair of abelian subvarieties of $A$, complementary with respect to the polarization $L$. Let  $\mu:B\times P\to A$ denote the addition map. The kernel of $\mu$ is given by

$$\ker \mu=\{(x,-x)\in B\times P\mid x\in B\cap P\}\simeq B\cap P.$$

We have the following useful results.

\begin{proposition}[{\cite[Proposition 5.3.11]{BL}}]\label{prop:kernel}
$\ker \mu\subset B[e_B]\cap P[e_P]$.
\end{proposition}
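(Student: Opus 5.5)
The plan is to reduce the inclusion to two facts: every point of $\ker\mu$ comes from $B\cap P$, and the norm endomorphisms annihilate the complementary subvariety while restricting to a fixed multiple of the identity on their own subvariety. Since $\ker\mu=\{(x,-x)\mid x\in B\cap P\}$, it suffices to show that every $x\in B\cap P$ satisfies $e_Bx=0$ and $e_Px=0$. By symmetry it is enough to prove $e_Bx=0$ for $x\in B\cap P$.

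The first key step is the identity $N_B|_B=e_B\cdot\mathrm{id}_B$. Write $N_B=\imath_B\psi_{L_B}\hat\imath_B\phi_L$. By definition of the induced polarization, $\hat\imath_B\phi_L\imath_B=\phi_{L_B}$. Hence $N_B\imath_B=\imath_B\psi_{L_B}\phi_{L_B}=\imath_B\,(e_B\phi_{L_B}^{-1})\phi_{L_B}=e_B\,\imath_B$. The composite here is a genuine homomorphism, not merely an element of $\Hom\otimes\mathbb Q$: $\psi_{L_B}$ is an honest isogeny because $K(L_B)\subset B[e_B]$, and $e_B$ is the exponent of $K(L_B)$. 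So for $x\in B$ we get $N_B(x)=e_Bx$.

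The second key step is that $N_B$ vanishes on $P$. This is exactly what complementarity with respect to $L$ means. Complementarity is the condition $P=\ker(N_B)^0$, or equivalently $\hat\imath_B\phi_L\imath_P=0$, meaning $P$ is orthogonal to $B$ for the form defined by $L$. Using the second formulation, $N_B\imath_P=\imath_B\psi_{L_B}(\hat\imath_B\phi_L\imath_P)=0$ identically on $P$, not just on its connected part. If complementarity is instead defined through the symmetric idempotent $\varepsilon_B=N_B/e_B$ and $\varepsilon_P=1-\varepsilon_B$, I will first show the orthogonality $\hat\imath_B\phi_L\imath_P=0$. This follows from the Rosati-symmetry of $\varepsilon_B$ and the relation $\varepsilon_B\varepsilon_P=0$.

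Combining the two steps: for $x\in B\cap P$ we have $e_Bx=N_B(x)=0$, so $x\in B[e_B]$. The same argument with the roles of $B$ and $P$ swapped gives $x\in P[e_P]$. Therefore $\ker\mu\simeq B\cap P\subset B[e_B]\cap P[e_P]$.

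The main obstacle is the vanishing of $N_B$ on all of $P$. One must make sure it holds as a homomorphism on the whole of $P$, not just up to isogeny. Passing through the orthogonality $\hat\imath_B\phi_L\imath_P=0$ is the cleanest way to handle this.
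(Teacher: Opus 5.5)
Your argument is correct. The paper gives no proof of its own and only cites Birkenhake--Lange; your proof is the standard norm-endomorphism argument for this fact: $N_B\imath_B=e_B\imath_B$ and $N_B\imath_P=0$ force $e_Bx=0$ for every $x\in B\cap P$, and symmetrically $e_Px=0$. The ``main obstacle'' you flag is not really one, because $P=(\ker N_B)^0$ already gives $P\subset\ker N_B$; likewise the symmetric input $N_P\imath_B=0$ comes directly from dualizing $\hat\imath_B\phi_L\imath_P=0$ to obtain $\hat\imath_P\phi_L\imath_B=0$.
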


\begin{lemma}[{\cite[Lemma 2.3]{Lange-Pauli}}]\label{lemma:kernel}
With the notations above
$$|K(L_B)|\cdot |K(L_P)|=|B\cap P|^2\cdot |K(L)|.$$
\end{lemma}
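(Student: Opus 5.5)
The plan is to prove Lemma \ref{lemma:kernel} by comparing degrees of isogenies, factoring the isogeny $\phi_L$ through the addition map $\mu$. Write $\mu:B\times P\to A$. Since $B$ and $P$ are complementary, $\mu$ is an isogeny, and by the description above $\deg\mu=|\ker\mu|=|B\cap P|$. The dual $\widehat\mu:\widehat A\to\widehat B\times\widehat P$ is the map $(\hat\imath_B,\hat\imath_P)$. It has the same degree as $\mu$, because dual isogenies have equal degree.

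The key step is the identity
$$\widehat\mu\circ\phi_L\circ\mu=\phi_{L_B}\times\phi_{L_P}$$
as homomorphisms $B\times P\to\widehat B\times\widehat P$. Equivalently, $\mu^*L$ is algebraically equivalent to $L_B\boxtimes L_P$. To see this, note that $\phi_{\mu^*L}=\widehat\mu\phi_L\mu$ is the $2\times2$ matrix with entries $\hat\imath_X\phi_L\imath_Y$ for $X,Y\in\{B,P\}$. The diagonal entries are $\phi_{L_B}$ and $\phi_{L_P}$ by definition of the induced polarization. The off-diagonal entries vanish by complementarity. Indeed, $P$ is the complement of $B$, that is, $P=\ker N_B$, or equivalently the connected component of $\ker(\hat\imath_B\phi_L)$. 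This gives $\hat\imath_B\phi_L\imath_P=0$. By the symmetry of $\phi_L$ (the Rosati involution), the transpose entry $\hat\imath_P\phi_L\imath_B=0$ as well. I expect this vanishing to be the only step needing care: one must check that complementarity in the sense of the norm-endomorphism criterion gives exactly this orthogonality.

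Taking degrees then finishes the argument, using that degree is multiplicative and that $\deg\phi_M=|K(M)|$:
$$|B\cap P|\cdot|K(L)|\cdot|B\cap P| = |K(L_B)|\cdot|K(L_P)|.$$
This is precisely the stated formula.
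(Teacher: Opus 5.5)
Your proof is correct and is essentially the paper's argument for its generalization, Lemma \ref{lemma:kernel_mu} with $r=2$. You compare $\deg(\widehat\mu\,\phi_L\,\mu)=|B\cap P|^2|K(L)|$ with $\deg(\phi_{L_B}\times\phi_{L_P})$; the only difference is that you kill the off-diagonal entries using $P\subseteq\ker(\hat\imath_B\phi_L)$ together with $\widehat{\phi_L}=\phi_L$, whereas the paper uses $N_BN_P=0$. (Minor point: $P$ is $(\ker N_B)^0$ rather than $\ker N_B$ itself, but you only use $P\subseteq\ker(\hat\imath_B\phi_L)$, which holds.)
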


\begin{remark}
In \cite{Lange-Pauli} the authors assume that $e_B=e_P$, however, from the proof one can conclude that this assumption is not necessary in Lemma \ref{lemma:kernel}. Moreover, one can obtain a more general result as follows.
\end{remark}

\begin{lemma}\label{lemma:kernel_mu}
Let $(A,L)$ be a polarized abelian variety and let $B_1,\dots,B_r$ be abelian subvarieties of $A$ with norm endomorphisms pairwise orthogonal (i.e.\ $N_{B_i}N_{B_j}=0$ for $i\neq j$), and assume that the addition map 
$$\mu:B_1\times \dots\times B_r\to A$$
is an isogeny. Then the induced polarization $\mu^*L$ splits as a product polarization and
$$|K(L_{B_1})|\cdots|K(L_{B_r})|=|\ker \mu|^2\cdot|K(L)|.$$
\end{lemma}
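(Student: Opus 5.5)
The plan is to reduce the statement to the case $r=2$, i.e.\ to Lemma \ref{lemma:kernel}, by proving two things: that the pulled-back polarization $\mu^*L$ is the product polarization, and that degrees of polarizations behave well under the isogeny $\mu$. Set $X=B_1\times\dots\times B_r$ and write $\iota_i:B_i\hookrightarrow A$ for the inclusions.

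\textbf{Splitting of $\mu^*L$.} The isogeny $\phi_{\mu^*L}=\hat\mu\,\phi_L\,\mu$ is an $r\times r$ matrix whose $(i,j)$ entry is $\hat\iota_i\phi_L\iota_j:B_j\to\widehat{B_i}$. On the diagonal this entry is $\phi_{L_{B_i}}$. So the splitting reduces to showing that the off-diagonal entries vanish.

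For $i\neq j$ I would argue as follows. We have $N_{B_i}=\iota_i\psi_{L_{B_i}}\hat\iota_i\phi_L$, and both $\iota_i$ and $\psi_{L_{B_i}}$ are injective up to isogeny. Hence $N_{B_i}N_{B_j}=0$ forces
$$\hat\iota_i\phi_L\iota_j\,\psi_{L_{B_j}}\hat\iota_j\phi_L=0.$$
The map $\psi_{L_{B_j}}\hat\iota_j\phi_L\iota_j=e_{B_j}\cdot\mathrm{id}_{B_j}$ is surjective. Composing on the right with $\iota_j$ therefore gives $e_{B_j}\,\hat\iota_i\phi_L\iota_j=0$. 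Since $\Hom$ groups are torsion free, $\hat\iota_i\phi_L\iota_j=0$. Consequently $\phi_{\mu^*L}=\mathrm{diag}(\phi_{L_{B_1}},\dots,\phi_{L_{B_r}})$. By the Appell--Humbert description, polarizations are determined by $\phi$, so $\mu^*L$ is algebraically equivalent to $\boxtimes_i L_{B_i}$.

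\textbf{Counting.} Next I compare degrees using $\phi_{\mu^*L}=\hat\mu\phi_L\mu$ and $\deg\hat\mu=\deg\mu=|\ker\mu|$. This gives
$$|K(\mu^*L)|=|\ker\mu|^2\,|K(L)|.$$
The diagonal form of $\phi_{\mu^*L}$ gives
$$|K(\mu^*L)|=\prod_i|K(L_{B_i})|.$$
Combining the two identities yields the formula in Lemma \ref{lemma:kernel_mu}.

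The main obstacle is the vanishing of the cross terms $\hat\iota_i\phi_L\iota_j$ from orthogonality of the norm endomorphisms. The difficulty is keeping track of the factors $e_{B_i}$ and of which maps may be cancelled. Two facts do the work: $\psi_{L_{B}}\hat\iota_B\phi_L\iota_B=e_B$, which is the standard identity $N_B|_B=e_B$, and the torsion-freeness of $\Hom$.
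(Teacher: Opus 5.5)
Your proof is correct and follows essentially the same route as the paper: you write $\phi_{\mu^*L}=\hat\mu\,\phi_L\,\mu$ as the matrix $(\hat\imath_{B_i}\phi_L\imath_{B_j})$, use $N_{B_i}N_{B_j}=0$ to kill the off-diagonal entries, and count $|K(\mu^*L)|$ in two ways. The only difference is in the cancellation step. You compose with $\imath_{B_j}$, use $\psi_{L_{B_j}}\hat\imath_{B_j}\phi_L\imath_{B_j}=e_{B_j}$, and invoke torsion-freeness of $\Hom$, whereas the paper uses surjectivity of $\psi_{L_{B_j}}\hat\imath_{B_j}\phi_L$; both are fine, and the ``reduction to $r=2$'' you announce at the start is never actually used.
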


\begin{proof}
Consider the polarization on $B_1\times\dots\times B_r$ induced by pullback via the addition map $\mu$. We have that
\begin{equation}\label{eq:mu}
|\ker (\phi_{\mu^*L})|=|\ker(\widehat{\mu}\phi_L \mu)|=|\ker \mu|^2\cdot |K(L)|.    
\end{equation}

On the other hand, consider the isogeny
$$\phi_{\mu^*L}=\phi_{(\imath_{B_1}+\dots+\imath_{B_r})^*L}=(\widehat{\imath}_{B_1}+\dots+\widehat{\imath}_{B_r})\phi_L\,(\imath_{B_1}+\dots+\imath_{B_r}),$$
which, as a map $B_1\times\dots\times B_r\to \widehat{B}_1\times\dots\times \widehat{B}_r$, is given by the matrix
$$(a_{ij})=\widehat{\imath}_{B_i}\phi_L \imath_{B_j}.$$

Since $N_{B_i}N_{B_j}=0$ for all $i\neq j$ we obtain
$$0=N_{B_i}N_{B_j}=\imath_{B_i}\psi_{L_{B_i}}(\widehat{\imath}_{B_i}\phi_L \imath_{B_j})\psi_{L_{B_j}}\widehat{\imath}_{B_j}\phi_L.$$
Since $\psi_{L_{B_j}}\widehat{\imath}_{B_j}$ is surjective onto $B_j$, $\imath_{B_i}$ is injective and $\psi_{L_{B_i}}$ and $\phi_L$ are isogenies, we conclude that the map $\widehat{\imath}_{B_i}\phi_L \imath_{B_j}=0$. Therefore $\mu^*L$ splits, and consequently

$$\phi_{\mu^*L}=\phi_{L_{B_1}}\times\dots\times \phi_{L_{B_r}}.$$

Then we have that

\begin{equation}\label{eq:mu*}
|K(\mu^*L)|=|\ker (\phi_{L_{B_1}}\times\dots\times \phi_{L_{B_r}})|=|K(L_{B_1})|\cdots |K(L_{B_r})|.
\end{equation}
Combining \eqref{eq:mu} and \eqref{eq:mu*} we obtain the result.
\end{proof}

\begin{corollary}\label{cor:transitivity}
In the situation of Lemma \ref{lemma:kernel_mu}, fix an index $i$ and set $P$ to be the complementary abelian subvariety of $B_i$ in $A$ with respect to $L$. Then $P$ is isogenous to the product
$$
\prod_{j\neq i} B_j,
$$
and the polarization induced by $L$ on $P$ is the product polarization $\otimes_{j\neq i} L_{B_j}$. In particular, for every $j\neq i$ the polarization induced by $L$ on $B_j$ coincides with the polarization induced by $L|_P$ on $B_j$.
\end{corollary}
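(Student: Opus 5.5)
We argue with norm endomorphisms, writing everything in $\operatorname{End}(A)\otimes\mathbb Q$. Put $\varepsilon_j = N_{B_j}/e_{B_j}$; this is the symmetric idempotent attached to $B_j$. The hypotheses say that the $\varepsilon_j$ are pairwise orthogonal. Since $\mu$ is an isogeny, $\sum_j \dim B_j = \dim A$. The symmetric idempotent $\sum_j \varepsilon_j$ then has image of dimension $\dim A$, so $\sum_j\varepsilon_j = 1$. The complementary subvariety $P$ of $B_i$ corresponds to the idempotent $1-\varepsilon_i=\sum_{j\neq i}\varepsilon_j$. Hence $P=\operatorname{Im}\bigl(\sum_{j\neq i}N_{B_j}\bigr)=\sum_{j\neq i}B_j$, where the last equality holds because the images of orthogonal idempotents add. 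It follows that the addition map $\mu_P:\prod_{j\neq i}B_j\to P$ is surjective. It is also the restriction of the isogeny $\mu$ to the factor $\prod_{j\ne i}B_j$, so it has finite kernel. Therefore $\mu_P$ is an isogeny, which proves the first claim.

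For the polarization, we note that $L|_P$ pulled back along $\mu_P$ equals $\mu^*L$ restricted to the sub-product $\prod_{j\neq i}B_j\subset\prod_j B_j$. This holds because $\mu_P=\mu\circ(\text{inclusion})$ and the image of $\mu_P$ lies in $P$. In the proof of Lemma~\ref{lemma:kernel_mu} we showed that $\mu^*L$ splits, with $\phi_{\mu^*L}=\phi_{L_{B_1}}\times\dots\times\phi_{L_{B_r}}$, since the off-diagonal entries $\widehat{\imath}_{B_i}\phi_L\imath_{B_j}$ vanish. Restricting a product polarization to a sub-product of the factors gives the product of the corresponding factors. Thus $\mu_P^*(L|_P)\equiv\boxtimes_{j\neq i}L_{B_j}$, which is the statement "the induced polarization on $P$ is the product polarization" in the sense of Lemma~\ref{lemma:kernel_mu}.

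The final claim follows by transitivity of restriction. The embedding $B_j\hookrightarrow A$ factors as $B_j\hookrightarrow P\hookrightarrow A$ for $j\neq i$, so $(L|_P)|_{B_j}=L|_{B_j}=L_{B_j}$. The step that needs the most care is the identification $P=\sum_{j\ne i}B_j$, that is, showing $\sum_j\varepsilon_j=1$. I would justify it through the criterion for symmetric idempotents, \cite[5.3]{BL}: symmetric idempotents correspond bijectively to abelian subvarieties, and a sum of orthogonal symmetric idempotents is again a symmetric idempotent whose image is the sum of the images. Equality with $1$ then follows from the dimension count.
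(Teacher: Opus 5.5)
Your proof is correct. For the polarization statement it follows the paper: pull $L|_P$ back along $\mu_P=\mu\circ(\text{inclusion})$, use the splitting $\phi_{\mu^*L}=\phi_{L_{B_1}}\times\dots\times\phi_{L_{B_r}}$ from Lemma~\ref{lemma:kernel_mu}, restrict to the sub-product, and get the last claim by transitivity of restriction.

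The one genuine difference is how you identify $P$ with $\sum_{j\neq i}B_j$.
\begin{itemize}
\item The paper reads this off from the splitting. It takes the complement of the factor $B_i$ in $\prod_j B_j$ with respect to the product polarization, namely $\prod_{j\neq i}B_j$, and transports it to $A$ via $\mu$. This tacitly uses that complements are compatible with pulling $L$ back along an isogeny.
\item You work directly in $\mathrm{End}_{\mathbb Q}(A)$ with the symmetric idempotents. You show $\sum_j\varepsilon_j=1$, hence $1-\varepsilon_i=\sum_{j\neq i}\varepsilon_j$, and conclude that $\mu_P$ is surjective with finite kernel.
\end{itemize}

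Your route uses only the orthogonality of the norm endomorphisms and the surjectivity of $\mu$ for this step, not the splitting. It also makes explicit the isogeny $\prod_{j\neq i}B_j\to P$, which the paper only asserts. The paper's version is shorter but leaves that transport step implicit.
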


\begin{proof}
By Lemma \ref{lemma:kernel_mu}, the pullback $\mu^*L$ splits as the product $L_{B_1}\otimes\cdots\otimes L_{B_r}$. Hence the orthogonal complement $P$ corresponds, up to isogeny, to the product of the remaining factors, endowed with the product polarization, and the statement follows by restriction.
\end{proof}

The following relation between the exponents of complementary abelian subvarieties helps to determine the type of the induced polarization in some cases.

\begin{lemma}\label{lemma:exponents}
Let $(A,L)$ be a polarized abelian variety and let $B,P\subset A$ be complementary with respect to $L$, with addition isogeny $\mu:B\times P\to A$. Then the exponents satisfy
$$ e_P\ \mid \exp K(L)\exp(\ker\mu) \quad and \quad \exp(\ker\mu) \mid \lcm(e_B,e_P).$$
\end{lemma}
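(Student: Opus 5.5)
We need to prove Lemma \ref{lemma:exponents}: $e_P \mid \exp K(L)\cdot\exp(\ker\mu)$ and $\exp(\ker\mu)\mid \lcm(e_B,e_P)$.

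The second divisibility is the easy one and we dispose of it first. By Proposition \ref{prop:kernel}, $\ker\mu\subset B[e_B]\cap P[e_P]$. Hence every element of $\ker\mu$ is killed by $e_B$ (and also by $e_P$). In fact this gives the stronger statement $\exp(\ker\mu)\mid\gcd(e_B,e_P)$, which in particular divides $\lcm(e_B,e_P)$.

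For the first divisibility we work with the pullback polarization $\mu^*L$ on $B\times P$. Since $B$ and $P$ are complementary, $N_BN_P=0$, so the argument in the proof of Lemma \ref{lemma:kernel_mu} (case $r=2$) shows that $\mu^*L$ splits. Thus
$$\phi_{\mu^*L}=\phi_{L_B}\times\phi_{L_P},\qquad K(\mu^*L)=K(L_B)\times K(L_P).$$
In particular $K(L_P)=\{0\}\times K(L_P)$ is a subgroup of $K(\mu^*L)$. On the other hand, $\phi_{\mu^*L}=\widehat\mu\,\phi_L\,\mu$, so
$$K(\mu^*L)=\mu^{-1}\bigl(\phi_L^{-1}(\ker\widehat\mu)\bigr).$$
Write $N=\exp K(L)$ and $M=\exp(\ker\mu)$. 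Recall that $\ker\widehat\mu$ is the Cartier dual of $\ker\mu$, so it is a finite group of exponent $M$.

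Now take $x\in K(\mu^*L)$ and set $y=\mu(x)$. Then $\phi_L(y)\in\ker\widehat\mu$, so $M\phi_L(y)=\phi_L(My)=0$, which means $My\in K(L)$. It follows that $NMy=0$, i.e.\ $NMx\in\ker\mu$, and therefore $NM^2x=0$. This naive chase only yields $\exp K(\mu^*L)\mid NM^2$, so one factor of $M$ must be saved; this is the main obstacle.

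To save that factor we use the splitting. For $x=(0,z)$ with $z\in K(L_P)$, we have $\mu(x)=z\in P$, and $NMz\in\ker\mu\cap(\{0\}\times P)$ when viewed in $B\times P$. But $\ker\mu=\{(w,-w)\}$, so an element of $\ker\mu$ lying in $\{0\}\times P$ has $w=0$. Hence $NMz=0$ directly, and so $e_P=\exp K(L_P)$ divides $NM=\exp K(L)\exp(\ker\mu)$, as claimed.

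The only point to verify carefully is the identification $K(\mu^*L)=\mu^{-1}\phi_L^{-1}(\ker\widehat\mu)$ together with the exponent of $\ker\widehat\mu$, both of which are standard.
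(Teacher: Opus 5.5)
Your proof is correct and is essentially the paper's: for $z\in K(L_P)$ the paper evaluates $\phi_{\mu^*L}(0,z)=\widehat\mu\,\phi_L(z)=0$ and gets $\exp(\ker\mu)\,z\in K(L)$ directly. So the ``lost factor'' you describe as the main obstacle never arises, because $\mu(0,z)=z$ and your own chase already gives $NMz=0$ in $A$, hence in $P$. For the second divisibility the paper uses $\ker\mu\subset K(\mu^*L)=K(L_B)\times K(L_P)$ instead of Proposition~\ref{prop:kernel}; since the elements of $\ker\mu$ have the form $(x,-x)$, both routes give your sharper conclusion $\exp(\ker\mu)\mid\gcd(e_B,e_P)$.
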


\begin{proof}
Let $e_L = \exp K(L)$ and $e_\mu = \exp(\ker\mu)$.  For any $x \in K(L_P)$, we have $(0,x) \in K(\mu^*L)$, so
$$
0 = \phi_{\mu^*L}(0,x) = \hat\mu(\phi_L(x)) \implies \phi_L(x) \in \ker \hat\mu.
$$
Since $\ker\mu\simeq \ker \hat\mu$ it holds that $\exp(\ker\hat\mu) = e_\mu$, and it follows that $e_\mu\,\phi_L(x) = 0$, hence $e_\mu\,x \in K(L)$.  By definition of $e_L$, we have $e_L(e_\mu\,x) = 0$.  

For the second divisibility, note that $\ker\mu \subset K(\mu^*L) = K(L_B)\times K(L_P)$. The exponent of a subgroup divides the exponent of the group, hence
\begin{equation*}
\exp(\ker\mu) \mid \lcm(e_B,e_P).\qedhere
\end{equation*}
\end{proof}

Recall also the Weil pairing associated with a polarization. If $A=V/\Lambda$ and $H$ is the Hermitian form representing $c_1(L)$, we denote the Weil pairing of $L$ by
$$
e^L:K(L)\times K(L)\longrightarrow \mathbb C^*.
$$
It is defined by
$$
e^L(x,y)=\exp\bigl(-2\pi i\,\operatorname{Im}H(\tilde{x},\tilde{y})\bigr),
$$
where $\tilde{x},\tilde{y}\in \Lambda(L):=\pi^{-1}(K(L))$ are lifts of $x,y\in K(L)$. This pairing is a useful tool for controlling exponents of polarizations descending to quotients. For a subgroup $G\subset K(L)$ we write
$$
G^{\perp_L}:=\{x\in K(L)\mid e^L(x,g)=1 \text{ for all } g\in G\}.
$$
Thus $G$ is isotropic if $G\subset G^{\perp_L}$, and maximal isotropic if $G=G^{\perp_L}$.

\begin{proposition}\label{prop:exponent_descended_polarization}
Let $(A,L)$ be a polarized abelian variety and let $G\subset K(L)$ be a finite subgroup. Let $\pi:A\to A/G$ be the quotient isogeny. Suppose that $L$ descends to a polarization $M$ on $A/G$, so that $\pi^*M\simeq L$. Then, for every integer $n\geq 1$,
$$
\exp K(M^{\otimes n})=n
$$
if and only if $G$ is maximal isotropic in $K(L)$ with respect to $e^L$.
\end{proposition}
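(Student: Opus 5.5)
The plan is to reduce the statement to the case $n=1$, namely to showing that $M$ is principal if and only if $G$ is maximal isotropic. The main input is a counting argument for $|K(M)|$ in terms of $|G|$ and $|K(L)|$, combined with the nondegeneracy of $e^L$.

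\textbf{Step 1: reduction to $n=1$.} Let $\type(M)=(d_1,\dots,d_g)$. Then $\phi_{M^{\otimes n}}=n\phi_M$, and $K(M^{\otimes n})$ has type $(nd_1,\dots,nd_g)$. Hence
$$\exp K(M^{\otimes n})=n\,d_g.$$
So $\exp K(M^{\otimes n})=n$ for every $n\ge 1$ holds if and only if $d_g=1$, that is, if and only if $M$ is a principal polarization. (Applying the condition with $n=1$ alone already forces $K(M)=0$.) It therefore remains to show that $M$ is principal exactly when $G$ is maximal isotropic.

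\textbf{Step 2: a degree count.} Since $\pi^*M\simeq L$, we have
$$\phi_L=\widehat\pi\,\phi_M\,\pi.$$
Using $\deg\widehat\pi=\deg\pi=|G|$ and taking degrees gives
$$|K(L)|=|G|^2\,|K(M)|.$$
Consequently $M$ is principal if and only if $|G|^2=|K(L)|$.

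\textbf{Step 3: isotropy from descent.} I would invoke the standard descent criterion (Mumford's theorem, see \cite[Ch.~6]{BL}): if $L$ descends along $\pi$, then $G$ is isotropic for $e^L$. This is the one genuinely external input. Directly, it follows because $G$ lifts to a level subgroup of the theta group $\mathcal G(L)$, which is commutative, and the commutator pairing of $\mathcal G(L)$ is exactly $e^L$. So in every case $G\subset G^{\perp_L}$.

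\textbf{Step 4: counting $G^{\perp_L}$.} The Weil pairing $e^L$ is nondegenerate on $K(L)$. Hence $G^{\perp_L}$ is the annihilator of $G$ under a perfect pairing of finite groups, and
$$|G^{\perp_L}|=\frac{|K(L)|}{|G|}.$$

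\textbf{Step 5: conclusion.} Given $G\subset G^{\perp_L}$, maximal isotropy $G=G^{\perp_L}$ is equivalent to $|G|=|K(L)|/|G|$, i.e. to $|G|^2=|K(L)|$. By Step 2 this is equivalent to $M$ being principal, and by Step 1 to $\exp K(M^{\otimes n})=n$ for all $n$.

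As a byproduct, the same argument identifies $K(M)\simeq G^{\perp_L}/G$.

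The step I expect to require the most care is Step 3, justifying isotropy from descent. Everything else is degree bookkeeping together with the perfectness of $e^L$.
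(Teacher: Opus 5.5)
Your proof is correct, but it reaches the key equivalence ``$M$ principal $\iff$ $G=G^{\perp_L}$'' by a different route than the paper.

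The paper cites Moret-Bailly's description $K(M)\simeq G^{\perp_L}/G$ of the kernel of a descended polarization. From this, principality of $M$ is immediately equivalent to $G=G^{\perp_L}$. The reduction from $M^{\otimes n}$ to $M$ via $\exp K(M^{\otimes n})=n\exp K(M)$ is then the same as your Step 1.

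You instead compute only the order of $K(M)$, from $\phi_L=\widehat\pi\,\phi_M\,\pi$. You combine it with two standard facts. First, a subgroup along which $L$ descends is isotropic for $e^L$. Second, $|G^{\perp_L}|=|K(L)|/|G|$ by nondegeneracy.

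Your route is more elementary and self-contained. It also shows where isotropy of $G$ enters: it is needed only for ``$M$ principal $\Rightarrow$ $G$ maximal isotropic'', while the converse is pure counting. The paper's citation packages isotropy and the group structure of $K(M)$ into one statement. That buys more: it gives the full type of $M$ even when $G$ is not maximal isotropic.

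One small caveat concerns your closing remark that the same argument identifies $K(M)\simeq G^{\perp_L}/G$. The counting gives only $|K(M)|=|G^{\perp_L}/G|$. An actual isomorphism needs the extra check that $\pi$ maps $G^{\perp_L}$ into $K(M)$, which is exactly the content of the result the paper cites. Your proof does not use this remark, so the argument is unaffected.
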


\begin{proof}
By \cite[Corollary 4.1.1]{MoretBailly}, the kernel of the descended polarization is $K(M)\simeq G^{\perp_L}/G$. Hence $M$ is principal if and only if $G^{\perp_L}=G$. If $M$ has type $(d_1,\ldots,d_g)$, then $M^{\otimes n}$ has type $(nd_1,\ldots,nd_g)$, and therefore $\exp K(M^{\otimes n})=n\exp K(M)$. The result follows.
\end{proof}

\subsection{Galois coverings and Prym varieties}

We now turn to Galois covers and Prym varieties. We recall some known results, following \cite{LR2022}. Let $f : \widetilde{C} \to C$ be a finite cover of smooth projective curves. The ramification and branch divisors are defined by 
$$
R_f:=\sum_{p\in\widetilde C}\bigl(\mult_p(f)-1\bigr)\,p,
\quad
B_f:=\sum_{q\in C}\left(\sum_{p\in f^{-1}(q)}\bigl(\mult_p(f)-1\bigr)\right)q,
$$
where $\mult_p(f)$ denotes the multiplicity of $f$ at the point $p\in\widetilde C$. The ramification degree of $f$ is $r(f):=\deg R_f$.

We say that a cover $f : \widetilde{C} \to C$ is a {\it Galois cover with group $G$} if $C$ is the quotient $\widetilde{C}/G$ and $f$ is the projection map. Let $B(f)=\{q_1,\dots,q_s\}$ be the branch points of a Galois cover $f$. For each $i$, choose a point $p_i \in f^{-1}(q_i)$ and denote by $G_i := G_{p_i}$ its stabilizer, of order $n_i := |G_i| \ge 2$, which is always a cyclic subgroup of $G$. The \emph{signature} of the cover is the tuple $(g_C; n_1,\dots,n_s)$, where $g_C$ is the genus of $C$.

For a Galois cover of signature $(g_C; n_1,\dots,n_s)$, the Hurwitz formula reads
\begin{equation}\label{eq:RH_formula}
g_{\widetilde{C}} = |G|(g_C - 1) + 1 + \frac{|G|}{2} 
\sum_{i=1}^{s} \left(1 - \frac{1}{n_i}\right).
\end{equation}

where $g_{\widetilde{C}}$ denotes the genus of $\widetilde{C}$. Let $g_C, s \ge 0$, and let $n_1,\dots,n_s \ge 2$ be integers. A $(2g_C+s)$-tuple
$$
(a_1,\dots,a_{g_C},b_1,\dots,b_{g_C};c_1,\dots,c_s)
$$
of elements of $G$ is called a \emph{generating vector of type $(g_C;n_1,\dots,n_s)$} if the following conditions are satisfied:
\begin{enumerate}
    \item[(a)] The elements $a_1, \dots, a_{g_C}, b_1, \dots, b_{g_C}, c_1, \dots, c_s$ generate the group $G$.
    \item[(b)] $c_i$ is of order $n_i$ for $i = 1, \dots, s$.
    \item[(c)] $\prod_{i=1}^{g_C} [a_i, b_i] \prod_{j=1}^{s} c_j = 1$, where $[a_i, b_i] = a_i b_i a_i^{-1} b_i^{-1}$.
\end{enumerate}
The following result is a consequence of the Uniformization Theorem (see \cite{B1991}).

\begin{theorem}\label{theo:Riemann:existence}
Given a finite group $G$, there exists a curve $\widetilde{C}$ of genus $g_{\widetilde{C}}$ endowed with a $G$-action of signature $(g_C;n_1,\dots,n_s)$ if and only if:
\begin{itemize}
    \item[(i)] the Hurwitz formula \eqref{eq:RH_formula} holds;
    \item[(ii)] $G$ admits a generating vector of type $(g_C;n_1,\dots,n_s)$.
\end{itemize}
\end{theorem}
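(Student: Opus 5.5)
The plan is to translate both directions into statements about Fuchsian groups via uniformization, so that a $G$-action of the given signature corresponds exactly to a surjective homomorphism with torsion-free kernel from a suitable Fuchsian group onto $G$. Let $\Gamma$ be the abstract group with presentation
$$
\Gamma=\Bigl\langle \alpha_1,\dots,\alpha_{g_C},\beta_1,\dots,\beta_{g_C},\gamma_1,\dots,\gamma_s \;\Bigm|\; \gamma_j^{n_j}=1,\ \prod_{i=1}^{g_C}[\alpha_i,\beta_i]\prod_{j=1}^{s}\gamma_j=1\Bigr\rangle .
$$
This is the orbifold fundamental group of a genus $g_C$ surface with $s$ cone points of orders $n_1,\dots,n_s$. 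We also need that the finite-order elements of $\Gamma$ are exactly the conjugates of powers of the $\gamma_j$, and that $\gamma_j$ has order exactly $n_j$.

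\emph{Necessity.} Suppose $G$ acts on $\widetilde C$ with quotient $C$ and signature $(g_C;n_1,\dots,n_s)$.
\begin{enumerate}
\item Hurwitz's formula \eqref{eq:RH_formula} follows from the Riemann--Hurwitz formula for $f$. Over $q_i$ there are $|G|/n_i$ points, each of multiplicity $n_i$, so $\deg R_f=\sum_i |G|(1-1/n_i)$.
\item For the generating vector, restrict $f$ to $\widetilde C\setminus f^{-1}(B(f))\to C\setminus B(f)$. This is an unramified Galois cover with group $G$, hence it is given by a surjective monodromy map $\pi_1(C\setminus B(f),q_0)\to G$.
\item The fundamental group $\pi_1(C\setminus B(f))$ is generated by standard loops $\alpha_i,\beta_i,\gamma_j$ subject only to $\prod[\alpha_i,\beta_i]\prod\gamma_j=1$. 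Their images $a_i,b_i,c_j$ therefore satisfy (c), and they satisfy (a) because $\widetilde C$ is connected.
\item The image $c_j$ of the small loop $\gamma_j$ around $q_j$ generates the stabilizer of a point over $q_j$, so $c_j$ has order $n_j$, which is (b). One must take care with orientation conventions, since the monodromy could be an anti-homomorphism; if so, replace each element by its inverse and reverse the order.
\end{enumerate}

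\emph{Sufficiency.} Suppose (i) and (ii) hold.
\begin{enumerate}
\item By uniformization (Poincar\'e's theorem on Fuchsian groups), realize $\Gamma$ as a cocompact Fuchsian group acting on $\mathbb H$, or on $\mathbb C$ or $\mathbb P^1$ in the few non-hyperbolic cases. Fix a genus $g_C$ curve $C$ with $s$ marked points; then $\mathbb H/\Gamma\simeq C$, and the $\gamma_j$ are elliptic elements of order exactly $n_j$.
\item The generating vector defines a homomorphism $\theta:\Gamma\to G$ with $\alpha_i\mapsto a_i$, $\beta_i\mapsto b_i$, $\gamma_j\mapsto c_j$. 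It is well defined by (c) and the relation $c_j^{n_j}=1$, and surjective by (a).
\item Let $K=\ker\theta$. Every torsion element of $\Gamma$ is conjugate to $\gamma_j^k$ with $0<k<n_j$. Condition (b) gives $\theta(\gamma_j^k)=c_j^k\neq 1$, so $K$ is torsion free.
\item Since $K$ is torsion free, $K$ acts freely on $\mathbb H$, and $\widetilde C:=\mathbb H/K$ is a compact Riemann surface, hence a smooth projective curve. The group $G\simeq\Gamma/K$ acts on it with $\widetilde C/G\simeq\mathbb H/\Gamma\simeq C$.
\item The stabilizers are the images of the conjugates of $\langle\gamma_j\rangle$, of order $n_j$, so the signature is $(g_C;n_1,\dots,n_s)$. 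The genus of $\widetilde C$ is then forced by \eqref{eq:RH_formula}, which is why (i) is needed to match the prescribed $g_{\widetilde C}$.
\end{enumerate}

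\emph{Main obstacle.} The delicate step is the existence and structure of the Fuchsian group $\Gamma$ with this presentation. This requires that $\gamma_j$ has order exactly $n_j$ and that the only torsion elements are conjugates of powers of the $\gamma_j$. I would take both facts from Poincar\'e's polygon theorem or the standard theory of cocompact Fuchsian groups (as in \cite{B1991}) rather than reprove them. The exceptional spherical and Euclidean signatures, such as $g_C=0$ with small $s$, are handled in the same way using finite or crystallographic groups acting on $\mathbb P^1$ or $\mathbb C$.
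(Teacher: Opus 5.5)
Your argument is correct and is the standard uniformization/Fuchsian-group proof that the paper relies on; the paper gives no proof of its own and simply cites Broughton \cite{B1991}, calling the statement a consequence of the Uniformization Theorem. One small correction to your closing remark: the ``bad'' signatures $(0;n)$ and $(0;n_1,n_2)$ with $n_1\neq n_2$ are not realized by any group acting on $\mathbb P^1$, but they need no treatment, because conditions (b) and (c) already rule out generating vectors of those types.
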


Let $f:\widetilde C\to C$ be a finite morphism of smooth projective curves, and denote by $(J\widetilde C,\widetilde\Theta)$ and $(JC,\Theta)$ the corresponding principally polarized Jacobians. Viewing Jacobians as groups of degree--zero line bundles, the pullback of line bundles defines a homomorphism 
$$
f^*:\; JC \longrightarrow J\widetilde C.
$$

This homomorphism is not injective if and only if $f$ factorizes through a cyclic \'etale cover $h:C'\to C$ of degree $\deg h\ge 2$, that is, if $f=h\circ k$ for some $k:\widetilde C\to C'$. Moreover, if $h$ is cyclic \'etale, then $\ker h^*$ has order $\deg h$ and is generated by the line bundle defining the cover $h$; in particular, if $k^*$ is injective, then $\ker f^*$ is cyclic of order $\deg h$ (see \cite[Propositions 3.2.2 and 3.2.3]{LR2022}).

On the other hand, the norm map $\Nm_f:\Pic^0(\widetilde C)\to \Pic^0(C)$ induces a homomorphism
$$
\Nm_f:\; J\widetilde C \longrightarrow JC.
$$

The \emph{Prym variety} of $f$ is defined as the abelian subvariety of $J\widetilde C$ complementary to $f^*JC$ with respect to the canonical polarization $\widetilde\Theta$, denoted by $P(f)=P(\widetilde C/C)$ or equivalently, $P(f)$ is the connected component of the origin of the kernel of the norm map. The following results are a useful tool to determine the type of $\widetilde\Theta_{P(f)}$ in some cases.

\begin{proposition}[{\cite[Proposition 3.2.8]{LR2022}}]\label{prop:type_prym}
Let $f:\widetilde C \to C$ be a cover of degree $d \ge 2$ and $g(C)=g\ge 1$. Then
\begin{enumerate}
\item[(a)]
If $f$ does not factorize via an \'etale cyclic cover of degree $\ge 2$, then $\widetilde\Theta_{P(f)}$ has type
$$
(1,\dots,1,d,\dots,d)
$$
with $g$ numbers equal to $d$ and $\widetilde g-2g$ numbers equal to $1$.

\item[(b)]
If $f$ factorizes as $f=h k$ with $h:C'\to C$ cyclic \'etale of degree $d_1<d$ and $k$ does not factorize via a cyclic \'etale cover of degree $\ge 2$, then $\widetilde\Theta_{P(f)}$ has type
$$
(1,\dots,1,\tfrac{d}{d_1},d,\dots,d)
$$
with $g-1$ numbers equal to $d$.
\end{enumerate}
\end{proposition}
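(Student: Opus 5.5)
The plan is to compute the polarization on the complement $B:=f^*JC\subset J\widetilde C$ and then transfer the answer to $P(f)$ using complementarity inside the principally polarized $(J\widetilde C,\widetilde\Theta)$.

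\textbf{Step 1 (the complementary subvariety).} Write $\pi:=f^*:JC\to B$ for the surjection onto its image. Using $\operatorname{Nm}_f\circ f^*=d$, the pullback $\pi^*(\widetilde\Theta_B)$ is algebraically equivalent to $d\Theta$. Equivalently, $\widehat{f^*}\,\phi_{\widetilde\Theta}\,f^*=d\,\phi_\Theta$, since $\widehat{f^*}$ is identified with $\operatorname{Nm}_f$ via the principal polarizations. Hence $K(\pi^*\widetilde\Theta_B)=JC[d]\simeq(\mathbb Z_d)^{2g}$.

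\textbf{Step 2 (transfer to the Prym).} Since $\widetilde\Theta$ is principal, Lemma \ref{lemma:kernel} gives
$$|K(\widetilde\Theta_B)|\cdot|K(\widetilde\Theta_{P(f)})|=|B\cap P(f)|^2 .$$
The inclusion $\ker\mu\subset K(\mu^*\widetilde\Theta)=K(\widetilde\Theta_B)\times K(\widetilde\Theta_{P(f)})$ (the argument in Lemma \ref{lemma:exponents}) shows that the projections embed $B\cap P(f)$ into both $K(\widetilde\Theta_B)$ and $K(\widetilde\Theta_{P(f)})$. Thus $|B\cap P(f)|\le|K(\widetilde\Theta_B)|$ and $|B\cap P(f)|\le|K(\widetilde\Theta_{P(f)})|$, and by the displayed identity both inequalities must be equalities. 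Therefore
$$K(\widetilde\Theta_{P(f)})\simeq B\cap P(f)\simeq K(\widetilde\Theta_B)$$
as abstract groups. The type of $\widetilde\Theta_{P(f)}$ is then read off from this group, padded with $1$'s up to $\dim P(f)=\widetilde g-g$.

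\textbf{Case (a).} Here $f^*$ is injective by the factorization criterion recalled above, so $\pi$ is an isomorphism. Hence $K(\widetilde\Theta_B)=JC[d]\simeq(\mathbb Z_d)^{2g}$, which gives the type $(1,\dots,1,d,\dots,d)$ with $g$ entries $d$.

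\textbf{Case (b).} Since $k^*$ is injective, $\ker f^*=\ker h^*=\langle\eta\rangle$ is cyclic of order $d_1$, and $\eta\in JC[d_1]\subset JC[d]$. Thus $\widetilde\Theta_B$ descends from $d\Theta$ along $JC\to JC/\langle\eta\rangle$. By \cite[Corollary 4.1.1]{MoretBailly}, as in Proposition \ref{prop:exponent_descended_polarization},
$$K(\widetilde\Theta_B)\simeq\langle\eta\rangle^{\perp}/\langle\eta\rangle,$$
where $\perp$ is taken for the Weil pairing $e^{d\Theta}$ on $JC[d]$. Choose a symplectic basis $e_1,\dots,e_g,f_1,\dots,f_g$ of $JC[d]\simeq(\mathbb Z_d)^{2g}$ with $\eta=\tfrac{d}{d_1}e_1$. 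Then:
\begin{itemize}
\item the orthogonal $\langle\eta\rangle^{\perp}$ consists of the elements whose $f_1$-coordinate lies in $d_1\mathbb Z_d$;
\item quotienting by $\langle\eta\rangle$ reduces the $e_1$-coordinate modulo $\tfrac{d}{d_1}$.
\end{itemize}
Hence $K(\widetilde\Theta_B)\simeq(\mathbb Z_{d/d_1})^2\times(\mathbb Z_d)^{2g-2}$. This gives the type $(1,\dots,1,\tfrac d{d_1},d,\dots,d)$ with $g-1$ entries $d$.

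\textbf{Main obstacle.} I expect the delicate point to be Step 2. One must check that the order identity of Lemma \ref{lemma:kernel}, combined with the embeddings of $B\cap P(f)$ into each kernel, really yields an isomorphism of groups and not just equality of orders. In case (b) one must also justify that the primitive element $\eta\in JC[d]$ can be taken as a multiple of a symplectic basis vector.
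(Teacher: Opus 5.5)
Your proposal is correct and is essentially the argument behind the citation. Steps 1 and 2 together with the descent formula reproduce the isomorphism $(\ker f^*)^{\perp_{d\Theta_C}}/\ker f^*\simeq K(\widetilde\Theta_{f^*JC})=K(\widetilde\Theta_{P(f)})=P(f)\cap f^*JC$ recorded in Proposition \ref{prop:kernel_cover}, and your case (b) is the same symplectic-basis Weil-pairing computation the paper carries out in Lemma \ref{lem:type_p2}. The two points you flag are harmless: since $B\cap P(f)$ is a subgroup of both $K(\widetilde\Theta_B)$ and $K(\widetilde\Theta_{P(f)})$ of the same order, the three groups are actually equal as subgroups of $J\widetilde C$; and $\eta=\tfrac{d}{d_1}e_1$ can be arranged by writing $\eta$ as the class of $\tfrac{1}{d_1}\lambda$ with $\lambda\in H_1(C,\mathbb Z)$ primitive (possible since $\eta$ has exact order $d_1$), extending $\lambda$ to a symplectic basis, and taking $e_1$ to be the class of $\tfrac1d\lambda$.
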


\begin{proposition}[{\cite[Proposition 3.2.9]{LR2022}}]\label{prop:kernel_cover}
The homomorphism $f^*:JC\to J\widetilde C$ induces an isomorphism
$$
(\ker f^*)^{\perp_{d\Theta_C}} / \ker f^*
\longrightarrow 
K(\Theta_{f^*JC})
=
K(\Theta_{P(f)})
=
P(f)\cap f^*JC
\subset P[d],
$$
and
$$
|P(f)\cap f^*JC|
=
\frac{|JC[d]|}{|\ker f^*|^2}.
$$
\end{proposition}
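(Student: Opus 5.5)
The plan is to compute the induced polarization on $B:=f^*JC$ by pulling it back to $JC$, identify its kernel through descent, and then transfer the answer to the complement $P(f)$ using the fact that $\widetilde\Theta$ is principal.

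First I would show that $(f^*)^*\widetilde\Theta\equiv d\,\Theta$ on $JC$. This is the standard identity $\widehat{f^*}\,\phi_{\widetilde\Theta}\,f^*=\phi_\Theta\,\Nm_f\,f^*=d\,\phi_\Theta$. It uses two facts: that $\widehat{f^*}$ corresponds to $\Nm_f$ under the principal polarizations, and that $\Nm_f f^*=d$.

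Next, factor $f^*$ as the isogeny $JC\to B$ with kernel $H:=\ker f^*$, followed by the inclusion $B\hookrightarrow J\widetilde C$. The polarization $\widetilde\Theta_B$ pulls back to $d\Theta$ under this isogeny, so $d\Theta$ descends along $JC\to JC/H\simeq B$ to $\widetilde\Theta_B$. In particular $H\subset K(d\Theta)=JC[d]$. By the descent result \cite[Corollary 4.1.1]{MoretBailly}, already used in Proposition \ref{prop:exponent_descended_polarization}, we obtain $K(\widetilde\Theta_B)\simeq H^{\perp_{d\Theta}}/H$, and the isomorphism is induced by $f^*$.

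Then I pass to the complement. Since $(J\widetilde C,\widetilde\Theta)$ is principal, Lemma \ref{lemma:kernel} with $|K(L)|=1$ gives $|K(\widetilde\Theta_B)|\cdot|K(\widetilde\Theta_{P(f)})|=|B\cap P(f)|^2$. Proposition \ref{prop:kernel} places $B\cap P(f)$ inside both $K(\widetilde\Theta_B)$ and $K(\widetilde\Theta_{P(f)})$. Indeed, in the principal case $\ker\mu\subset K(\mu^*\widetilde\Theta)=K(\widetilde\Theta_B)\times K(\widetilde\Theta_{P(f)})$, and $\ker\mu=\{(x,-x)\}$. The two inclusions together with the cardinality identity force
$$K(\widetilde\Theta_B)=B\cap P(f)=K(\widetilde\Theta_{P(f)}).$$
This group is the image of $H^{\perp}\subset JC[d]$ under $f^*$, so it is killed by $d$, and hence it lies in $P(f)[d]$.

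Finally I count. The Weil pairing $e^{d\Theta}$ is nondegenerate on $K(d\Theta)=JC[d]$, so $|H^{\perp}|=|JC[d]|/|H|$. Therefore $|P(f)\cap B|=|JC[d]|/|\ker f^*|^2$.

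The main obstacle is the equality of the three groups. The key point is that principality of $\widetilde\Theta$ makes Lemma \ref{lemma:kernel} rigid enough that the inclusions from Proposition \ref{prop:kernel} must be equalities. I would check carefully that $\ker\mu$ embeds diagonally into each factor, so that this comparison is legitimate.
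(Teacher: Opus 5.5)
Your argument is correct. The paper gives no proof of its own here and simply cites \cite[Proposition 3.2.9]{LR2022}; your route (the identity $(f^*)^*\widetilde\Theta\equiv d\Theta$, descent along $JC\to f^*JC$ giving $K(\widetilde\Theta_{f^*JC})\simeq(\ker f^*)^{\perp_{d\Theta}}/\ker f^*$, and $K(\widetilde\Theta_B)=B\cap P(f)=K(\widetilde\Theta_{P(f)})$ for complementary subvarieties of a principally polarized abelian variety) is the standard argument behind that citation.

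One small point of attribution: Proposition \ref{prop:kernel} as stated only gives $\ker\mu\subset B[e_B]\cap P[e_P]$. The inclusions $B\cap P(f)\subset K(\widetilde\Theta_B)\cap K(\widetilde\Theta_{P(f)})$ therefore come from your own justification $\ker\mu\subset K(\mu^*\widetilde\Theta)=K(\widetilde\Theta_B)\times K(\widetilde\Theta_{P(f)})$. That step rests on the splitting of $\mu^*\widetilde\Theta$ for complementary subvarieties, which holds without principality; principality enters only through $|K(\widetilde\Theta)|=1$ in Lemma \ref{lemma:kernel}.
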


\subsection{Construction of cyclic coverings and canonical decomposition}\label{section_coverings}

In order to analyze the codifferential of the Prym map in Section \ref{section5} and prepare the proof of the finiteness theorem, we briefly recall the basic theory of cyclic coverings of curves in full generality following Section 3.5 in \cite{EV}. The same results with a slightly different notation appear in \cite{Cornalba,Pa}.

A cyclic Galois cover of degree $n$ between smooth curves
$$
f:Y\to X
$$
is determined by the following data:
\begin{itemize}
\item a line bundle $L\in\mathrm{Pic}(X)$,
\item an effective divisor
$$
B=\sum_j b_jB_j,\; \text{with}\; 0<b_j<n \;\text{and}\; \sum b_j\equiv 0 \,(n),
$$
\end{itemize}
whose irreducible components are $B_j$, such that $L^{\otimes n}\simeq\mathcal O_X(B)$.  In particular,
$$
n\deg(L)=\deg(B)=\sum_j b_j.
$$

Consider the $\mathcal O_X$-algebra
$$
\mathcal A'=\bigoplus_{i=0}^{n-1}L^{-i},
$$
where the multiplication is defined using the identification $L^{-n}\simeq\mathcal O_X(-B)\subset\mathcal O_X$. Let $Y'$ be the spectrum of this algebra. Then $Y$ is the normalization of $Y'$.

For $0\le i\le n-1$ define
$$
L^{(i)}=L^i\left(-\left\lfloor\frac{i}{n}B\right\rfloor\right),
$$
where $\lfloor x\rfloor$ denotes the floor of $x$ and set
$$
\mathcal A=\bigoplus_{i=0}^{n-1}L^{-(i)}.
$$
By \cite[Claim 3.8]{EV}, the natural inclusions $L^{-i}\subset L^{-(i)}$ induce a morphism of $\mathcal O_X$-algebras $\phi:\mathcal A'\to\mathcal A$, and $\mathcal A$ carries a natural $\mathcal O_X$-algebra structure. Moreover, $Y$ is the spectrum of $\mathcal A$.

Fix a primitive $n$-th root of unity $\zeta$. The cyclic group $G=\mathbb Z/n\mathbb Z$, generated by $\sigma$, acts on $\mathcal A$ by $\mathcal O_X$-algebra automorphisms as follows: if $s$ is a local section of $L^{-(i)}$, then
$$
\sigma(s)=\zeta^i s.
$$
With this action one has $\mathcal A^G=\mathcal O_X$ and $Y/G=X$.

The fundamental structural decomposition is
\begin{equation}\label{structural}
f_*\mathcal O_Y
=
\bigoplus_{i=0}^{n-1}
L^{-(i)}
=
\bigoplus_{i=0}^{n-1}
L^{-i}\left(\left\lfloor\frac{i}{n}B\right\rfloor\right),
\end{equation}
and the summands $L^{-(i)}$ are precisely the eigensheaves of the $G$-action.

Since $f$ is finite, the functor $f_*$ is exact on coherent sheaves. Moreover, for finite morphisms between smooth curves one has the identification
$$
f_*\omega_Y\cong\mathcal Hom_{\mathcal O_X}(f_*\mathcal O_Y,\omega_X).
$$
As $f_*\mathcal O_Y$ is locally free of finite rank, no higher $\mathcal Ext$ terms appear. Substituting the decomposition \eqref{structural} into this formula yields
\begin{equation}\label{canonical}
f_*\omega_Y
=
\bigoplus_{i=0}^{n-1}
\omega_X\otimes L^{i}
\left(-\left\lfloor\frac{i}{n}B\right\rfloor\right).
\end{equation}

\section{Prym--Tyurin varieties coming from intermediate coverings}\label{section3}

As discussed in the introduction, some of the new examples of Prym--Tyurin varieties of small exponent obtained using the techniques of \cite{LR_isotypical} arise as isotypical components of Jacobians of curves admitting a group action. Although this approach produces genuinely new Prym--Tyurin varieties, these examples do not, in general, admit a direct geometric interpretation in terms of coverings of curves.

Motivated by this observation, the aim of the present section is to identify situations in which a Prym--Tyurin variety can be described geometrically as the Prym variety associated to an intermediate covering of $f:\widetilde{C}\to C$, endowed with the polarization induced from the Jacobian $J\widetilde C$. More precisely, we study factorizations of a cover $f:\widetilde{C}\to C$ of smooth projective curves as $f=h\circ k$, and analyze when the induced polarization on the abelian subvariety $k^{*}P(h)\subset J\widetilde C$ is a multiple of a principal polarization, giving rise to Prym--Tyurin varieties.

 Consider the diagram in Figure \ref{fig:intermediate_cover}.
 \begin{figure}[h!]
    \centering 
    \begin{subfigure}[t]{0.3\textwidth}
    \centering
    \begin{tikzcd}[row sep=large, column sep=large]
     &\widetilde{C} \ar[dd,"f"] \ar[dl,"k"]\\
     C' \ar[dr,"h"]& \\
     &C
    \end{tikzcd}

    \label{fig01}
    \end{subfigure} \begin{subfigure}[t]{0.4\textwidth}
    \centering
    \begin{tikzcd}[row sep=large, column sep=large]
     &J\widetilde{C} \\
     JC' \ar[ur,"k^*"]& \\
     &JC \ar[ul,"h^*"] \ar[uu,"f^*"]
    \end{tikzcd}

    \label{fig02}
    \end{subfigure} 
    \caption{}
    \label{fig:intermediate_cover}
\end{figure}
The following result describes the polarization induced on $f^*JC$, which will be a useful tool to determine in some cases the type of induced polarizations on certain abelian subvarieties.

\begin{lemma}\label{lemma_type}
Let $f:\widetilde{C}\to C$ be a cover of smooth projective curves over an elliptic curve $C$, which factors as $f=h\circ k$, where $k:\widetilde{C}\to C'$ is an \'etale cyclic cover of degree $p$ defined by  $\eta\in JC'[p]$, and $h:C'\to C$ is a ramified cover of degree $q$, with $p$ and $q$ odd primes. Then  the induced polarization on $f^*JC$ satisfies
$$
\ptype(\widetilde{\Theta}_{f^*JC})=
\begin{cases}
(q),  & \text{if } \eta \in h^*JC,\\
(pq), & \text{if } \eta \notin h^*JC.
\end{cases}
$$
\end{lemma}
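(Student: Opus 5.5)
Since $C$ is elliptic, $f^*JC\subset J\widetilde C$ is an elliptic curve. The plan is to compute its induced polarization by pulling back along the isogeny $f^*:JC\to f^*JC$ and comparing degrees. On an elliptic curve a polarization is determined by its degree $d$, and its type is $(d)$. If $\varphi:E_1\to E_2$ is an isogeny and $M$ is a polarization on $E_2$, then $\deg\varphi^*M=\deg\varphi\cdot\deg M$. So it suffices to know two things: the degree of $(f^*)^*\widetilde\Theta$ on $JC$, and the order of $\ker f^*$.

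\textbf{Step 1.} I will use the standard fact $(f^*)^*\widetilde\Theta\equiv \deg f\cdot\Theta$, equivalently $\widehat{f^*}\,\phi_{\widetilde\Theta}\,f^*=\deg(f)\,\phi_\Theta$, which follows from $\Nm_f\circ f^*=[\deg f]$. This gives $\deg (f^*)^*\widetilde\Theta_{f^*JC}=pq$. Writing $f^*:JC\to f^*JC$ as an isogeny of degree $|\ker f^*|$, the degree of the induced polarization is
\[
d=\frac{pq}{|\ker f^*|}.
\]
Hence the type is $(pq/|\ker f^*|)$.

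\textbf{Step 2: compute $\ker f^*$.} Write $f^*=k^*\circ h^*$.
\begin{itemize}
\item First, $h^*$ is injective. By the criterion recalled before Proposition \ref{prop:type_prym}, $h^*$ fails to be injective only if $h$ factors through a cyclic \'etale cover of degree $\ge 2$. But $\deg h=q$ is prime, so such a factor would be all of $h$, and $h$ is ramified.
\item Second, $k$ is cyclic \'etale of degree $p$ defined by $\eta$, so $\ker k^*=\langle\eta\rangle$ has order $p$.
\end{itemize}
Therefore
\[
\ker f^*=(h^*)^{-1}\bigl(\langle\eta\rangle\cap h^*JC\bigr)\simeq \langle\eta\rangle\cap h^*JC .
\]
Since $\langle\eta\rangle$ has prime order $p$, this intersection is either all of $\langle\eta\rangle$ or trivial. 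It is all of $\langle\eta\rangle$ exactly when $\eta\in h^*JC$, giving $|\ker f^*|=p$; otherwise $|\ker f^*|=1$.

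\textbf{Step 3: conclude.} Substituting into Step 1 gives type $(q)$ when $\eta\in h^*JC$ and type $(pq)$ when $\eta\notin h^*JC$.

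The main point needing care is Step 1, namely justifying $(f^*)^*\widetilde\Theta\equiv\deg(f)\Theta$. I would cite it as the standard identity $\widehat{f^*}=\Nm_f$ under the principal polarizations, as in \cite{LR2022}, so that $\widehat{f^*}\phi_{\widetilde\Theta}f^*=\phi_\Theta\Nm_f f^*=\deg(f)\,\phi_\Theta$. The injectivity of $h^*$ is immediate from primality of $q$.
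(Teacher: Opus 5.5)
Your proof is correct and follows the same route as the paper: you compute $\ker f^*\simeq\langle\eta\rangle\cap h^*JC$ using injectivity of $h^*$, and then read off the type from $|\ker f^*|$. Your appeal to the primality of $q$ is a slightly more careful justification of injectivity than the paper's bare ``$h$ is ramified''. The only other difference is that you derive the degree $pq/|\ker f^*|$ directly from $(f^*)^*\widetilde\Theta\equiv\deg(f)\,\Theta$ and multiplicativity of degrees under isogenies of elliptic curves, whereas the paper cites the count $|K(\widetilde\Theta_{f^*JC})|=|JC[d]|/|\ker f^*|^2$ from Proposition~\ref{prop:kernel_cover}; in dimension one these are the same computation.
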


\begin{proof}
Since $h$ is ramified, the induced homomorphism $h^*:JC\to JC'$ is injective. As $f^*=k^*\circ h^*$, we have
$$
h^*(\ker f^*)=\ker(k^*)\cap h^*JC.
$$
Since $k$ is an \'etale cyclic cover of prime degree $p$ defined by $\eta$, one has $\ker(k^*)=\langle\eta\rangle\simeq\mathbb Z_p$, and therefore $\ker f^*$ is trivial if and only if $\eta\notin h^*JC$ and thus has order $p$ if $\eta\in h^*JC$. Let $d=pq$ be the degree of the cover $f$. According to Proposition \ref{prop:kernel_cover},
$$
|K(\widetilde{\Theta}_{f^*JC})|=\frac{|JC[d]|}{|\ker f^*|^2}=\frac{d^2}{|\ker f^*|^2}.
$$
Since $\dim f^*JC=1$, we have that the induced polarization has type $\left(\frac{d}{|\ker f^*|}\right)$, and the claim follows.
\end{proof}

\begin{lemma}\label{lemma:kernel_mu_tilde}
Let $f:\widetilde{C}\to C$ be a cover of smooth projective curves which factors as $f=h\circ k$, where $k:\widetilde{C}\to C'$ is an \'etale cyclic cover of degree $p$ defined by $h^*(\xi)=\eta\in JC'[p]$ where $\xi\in JC[p]$. Let
$$
\mu: h^*JC\times P(h)\longrightarrow JC'\quad \text{and}\quad \tilde\mu: k^*h^*JC\times k^*P(h)\longrightarrow k^*JC'
$$
be the corresponding addition maps. Then
$$ \ker(\tilde\mu) \simeq\ker(\mu)/ (\ker(k^*)\cap \ker \mu).$$
Moreover, if $h$ is a ramified cover of degree $p$, then $\eta\in P(h)$ and
$$ \ker(\tilde\mu) \simeq\ker(\mu)/ \ker(k^*).$$
\end{lemma}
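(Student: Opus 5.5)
The plan is to compare the two addition maps through the commutative square
$$
k^*\circ\mu=\tilde\mu\circ(k^*\times k^*),
$$
where $k^*\times k^*:h^*JC\times P(h)\to k^*h^*JC\times k^*P(h)$ is surjective. By Lemma \ref{lemma:kernel_mu_tilde}'s hypotheses, $\ker k^*=\langle\eta\rangle$ has order $p$. Throughout we use that $\ker\mu=\{(x,-x)\mid x\in h^*JC\cap P(h)\}$, and similarly $\ker\tilde\mu=\{(y,-y)\mid y\in k^*h^*JC\cap k^*P(h)\}$. So the whole statement reduces to comparing $h^*JC\cap P(h)$ with $k^*h^*JC\cap k^*P(h)$ via $k^*$.

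First I will show that $k^*$ maps $h^*JC\cap P(h)$ onto $k^*h^*JC\cap k^*P(h)$. Take $y=k^*a=k^*b$ with $a\in h^*JC$ and $b\in P(h)$. Then $a-b\in\ker k^*=\langle\eta\rangle$. Since $\eta=h^*\xi\in h^*JC$, we get $b=a-m\eta\in h^*JC$, so $b\in h^*JC\cap P(h)$ and $y=k^*b$. This proves surjectivity.

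The restricted map $h^*JC\cap P(h)\to k^*h^*JC\cap k^*P(h)$ has kernel $\ker k^*\cap(h^*JC\cap P(h))$. Transported to $\ker\mu$ via $x\mapsto(x,-x)$, this is exactly what the statement denotes by $\ker(k^*)\cap\ker\mu$. This yields the first isomorphism.

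For the second part, assume $h$ is ramified of degree $p$. I need $\eta\in P(h)$, i.e.\ $\Nm_h(\eta)=0$ (together with membership in the connected component). We have
$$
\Nm_h h^*\xi=p\,\xi=0,
$$
so $\eta\in\ker\Nm_h$. Since $h$ is ramified, $h^*$ is injective and $\ker\Nm_h$ is connected by the standard duality ($\ker \Nm_h$ is dual to $\ker h^*$, which is trivial), so $\ker\Nm_h=P(h)$. Hence $\eta\in P(h)\cap h^*JC$, which gives $\ker k^*\subset h^*JC\cap P(h)$, and the quotient becomes $\ker\mu/\ker k^*$.

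The main obstacle I anticipate is justifying the connectedness of $\ker\Nm_h$ for ramified $h$. If that is not available in the paper, I would fall back on Proposition \ref{prop:kernel_cover}: it gives $h^*JC\cap P(h)=K(\Theta_{h^*JC})$. Since $h^*$ is injective, this equals $h^*(JC[p])$ (the perp of the trivial group in $JC[p]$), which directly contains $\eta=h^*\xi$. This alternative route avoids the connectedness question entirely, so I would use it as the primary argument.
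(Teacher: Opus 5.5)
Your proposal is correct. The first part is the paper's argument: restrict $k^*\times k^*$ to $\ker\mu$, get surjectivity onto $\ker\tilde\mu$ from $a-b\in\langle\eta\rangle\subset h^*JC$, identify the kernel with $\ker k^*\cap\ker\mu$, and apply the first isomorphism theorem.

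For the second part, the paper's proof is exactly your first argument. Since $h$ is ramified of prime degree, $h^*$ is injective, so $\ker\Nm_h$ is connected by \cite[Proposition 3.2.4]{LR2022}. Together with $\Nm_h(\eta)=p\xi=0$ this gives $\eta\in P(h)$.

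Your preferred route through Proposition \ref{prop:kernel_cover} is a genuinely different and equally valid way to get $\eta\in P(h)$. With $\ker h^*=0$, the orthogonal of the trivial group in $K(p\Theta_C)$ is all of $JC[p]$. Hence $h^*$ maps $JC[p]$ isomorphically onto $K(\Theta_{h^*JC})=h^*JC\cap P(h)$, and $\eta=h^*\xi$ lies there directly. This avoids the connectedness question, though it still rests on a cited result from \cite{LR2022}. It also gives more: it identifies $\ker\mu\simeq JC[p]$ explicitly, so $|\ker\mu|=p^2$ when $C$ is elliptic. That is the count the paper later extracts from Lemma \ref{lemma:kernel} in the proof of Theorem \ref{lemma:PTlemma}.

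Both routes need $h^*$ to be injective. You should say why: a ramified cover of prime degree cannot factor through an \'etale cyclic cover of degree $\ge 2$.
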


\begin{proof}
Consider the commutative diagram
\begin{equation}\label{eq:diagram-mu}
\begin{tikzcd}[row sep=large, column sep=large]
h^*JC\times P(h) \ar[r,"\mu"] \ar[d,"k^*\times k^*"'] 
& JC' \ar[d,"k^*"] \\
k^*h^*JC\times k^*P(h) \ar[r,"\tilde\mu"] 
& k^*JC'.
\end{tikzcd}
\end{equation}

Restricting $k^*\times k^*$ to $\ker\mu$ we obtain a homomorphism
$$
\alpha:=(k^*\times k^*)|_{\ker\mu}:\ker\mu\longrightarrow\ker\tilde\mu.
$$

The map $\alpha$ is well defined by commutativity of \eqref{eq:diagram-mu}. We claim that $\alpha$ is surjective. Indeed, let $(z,-z) \in\ker\tilde\mu$. Since $k^*h^*JC\times k^*P(h)$ is the image of $k^*\times k^*$, there exist $x\in h^*JC$ and $y\in P(h)$ such that $z=k^*x=k^*y$. Hence $k^*(x-y)=0$, and since $\ker k^*=\langle\eta\rangle\simeq\mathbb Z_p$, we have  $y=x-t\eta$ for some $t\in\{0,\dots,p-1\}$. Note that $x,\eta\in h^*JC$ implies $y\in h^*JC\cap P(h)$, thus $(y,-y)\in\ker\mu$  and
$$
\alpha(y,-y)=(k^*y,-k^*y)=(z,-z),
$$
which proves surjectivity.

Then we have that
$$
\ker\alpha=\{(x,-x)\in\ker\mu:\ k^*x=0\}
\simeq \ker k^*\cap \ker \mu
$$
and the result follows by the first isomorphism theorem. Now suppose that $h$ is ramified of degree $p$, then $h^*$ is injective and \cite[Proposition 3.2.4]{LR2022} implies that $\ker\Nm_h$ is connected. Note that for $\eta\in JC'[p]$ 
$$\Nm_h(\eta)=\Nm_h(h^*\xi)=\deg(h)\cdot \xi=0$$
since $\xi$ is a $p$-torsion point of $JC$. Then $\eta\in P(h)$. Since also $\eta=h^*\xi\in h^*JC$, we have
$$
\ker k^*=\langle\eta\rangle\subset h^*JC\cap P(h)\simeq\ker\mu.
$$
Thus $\ker k^*\cap\ker\mu=\ker k^*$, and the result follows.
\end{proof}

Now we want to determine the type of the induced polarization on $k^*P(h)$. We have the following result.

\begin{theorem}\label{lemma:PTlemma}
Let $f:\widetilde{C}\to C$ be a cover of smooth projective curves over an elliptic curve $C$ which factors as $f=h\circ k$, where $k:\widetilde{C}\to C'$ is an \'etale cyclic cover of degree $p$ defined by $\eta\in h^*JC[p]\subset JC'$, and $h:C'\to C$ is a ramified cover of degree $q$ with ramification degree $r(h)\ge 4$, and $p$ and $q$ odd primes. Then $P:=k^*P(h)$ is a Prym--Tyurin variety if and only if $p=q$ and $\exp K(\widetilde\Theta_{P})=p$.
\end{theorem}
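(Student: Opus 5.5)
The plan is to compute $|K(\widetilde\Theta_P)|$ exactly and compare it with $e^{2\dim P}$, which is what a polarization of type $(e,\dots,e)$ requires. Write $g'=g(C')$. Since $C$ is elliptic, Hurwitz gives $2g'-2=r(h)\ge 4$, so $\dim P=\dim P(h)=g'-1\ge 2$.

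\textbf{Step 1: reduce to $k^*JC'$.} Consider the three abelian subvarieties $f^*JC$, $P=k^*P(h)$ and $P(k)$ of $J\widetilde C$. I claim their addition map is an isogeny and their norm endomorphisms are pairwise orthogonal. Both facts should follow from the splitting $JC'\sim h^*JC\times P(h)$ and from the complementarity of $k^*JC'$ and $P(k)$, since the norm endomorphisms are built from $k^*$, $\Nm_k$, $h^*$ and $\Nm_h$. Corollary \ref{cor:transitivity} then says that $\widetilde\Theta_P$ and $\widetilde\Theta_{f^*JC}$ coincide with the polarizations induced from $(k^*JC',\widetilde\Theta|_{k^*JC'})$. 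So we may work inside $k^*JC'$, where $f^*JC$ and $P$ are complementary.

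The polarization on $k^*JC'$ is computed from Proposition \ref{prop:kernel_cover} applied to the étale cover $k$. There $\ker k^*=\langle\eta\rangle$, so
$$|K(\widetilde\Theta_{k^*JC'})|=\frac{p^{2g'}}{p^2}=p^{2g'-2}.$$
Since $\eta\in h^*JC$, Lemma \ref{lemma_type} gives $|K(\widetilde\Theta_{f^*JC})|=q^2$.

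\textbf{Step 2: the intersection.} I use Lemma \ref{lemma:kernel_mu_tilde}, which gives $\ker\tilde\mu\simeq\ker\mu/(\langle\eta\rangle\cap\ker\mu)$. Here $h^*$ is injective because $h$ is ramified, so Proposition \ref{prop:kernel_cover} gives $|\ker\mu|=|h^*JC\cap P(h)|=q^2$. Write $\eta=h^*\xi$ with $\xi\in JC[p]$. Then $\Nm_h(\eta)=q\xi$, and $\ker \Nm_h$ is connected because $h$ is ramified. Hence $\eta\in P(h)$ if and only if $q\xi=0$, which for primes means $p=q$. So $|\ker\tilde\mu|=q^2$ if $p\ne q$, and $|\ker\tilde\mu|=p$ if $p=q$.

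\textbf{Step 3: count.} Lemma \ref{lemma:kernel} applied to the pair $f^*JC$, $P$ inside $k^*JC'$ gives
$$|K(\widetilde\Theta_P)|=\frac{|\ker\tilde\mu|^2\,p^{2g'-2}}{q^2}.$$
\begin{itemize}
\item If $p\neq q$, this equals $q^2p^{2(g'-1)}$. A type $(e,\dots,e)$ would force $e^{g'-1}=q\,p^{g'-1}$. Since $g'-1\ge 2$ and $q\ne p$ is prime, $q$ is not a $(g'-1)$-th power, so no such $e$ exists and $P$ is not Prym--Tyurin.
\item If $p=q$, this equals $p^{2(g'-1)}$. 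If $P$ is Prym--Tyurin of exponent $e$, then $e^{2(g'-1)}=p^{2(g'-1)}$, so $e=p=\exp K$. Conversely, if $\exp K(\widetilde\Theta_P)=p$, then every elementary divisor divides $p$. Their product is $p^{g'-1}$ over $g'-1$ entries, so all of them equal $p$. Hence the type is $(p,\dots,p)$ and $P$ is Prym--Tyurin of exponent $p$.
\end{itemize}

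The main obstacle is Step 1. One must verify rigorously the orthogonality hypothesis of Lemma \ref{lemma:kernel_mu}, namely that $N_{f^*JC}$, $N_P$ and $N_{P(k)}$ pairwise compose to zero. One must also check that $k^*P(h)$ really is the complement of $f^*JC$ inside $k^*JC'$, so that the induced polarization may be computed inside $k^*JC'$. My first attempt would be to express these norm endomorphisms, up to isogeny, as symmetric idempotent-type elements in $\operatorname{End}_{\mathbb Q}(J\widetilde C)$ built from $k^*\Nm_k$ and $f^*\Nm_f$, and check the vanishing of the products directly.
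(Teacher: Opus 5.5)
Your proposal is correct and follows the paper's proof essentially step by step. The paper likewise reduces to $k^*JC'$ via Corollary \ref{cor:transitivity}. It then applies Lemma \ref{lemma:kernel} to $\tilde\mu$ with $|K(\widetilde\Theta_{k^*JC'})|=p^{2(g'-1)}$ and $|K(\widetilde\Theta_{f^*JC})|=q^2$. Finally it computes $|\ker\tilde\mu|$ from Lemma \ref{lemma:kernel_mu_tilde} according to whether $\eta\in P(h)$, i.e.\ whether $p=q$.

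The complementarity you flag as the main obstacle is simply asserted in the paper. It also follows quickly from the fact that $k^*:(JC',p\Theta')\to(k^*JC',\widetilde\Theta|_{k^*JC'})$ is a polarized isogeny, which carries the complementary pair $h^*JC,\,P(h)$ to a complementary pair.
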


\begin{proof}
Assume that $P$ is a Prym--Tyurin variety of exponent $\varepsilon\geq 2$. The hypothesis on the ramification degree and the Riemann--Hurwitz formula imply that $\dim P=d\ge 2$, then $\widetilde\Theta_P$ has type $(\varepsilon,\dots,\varepsilon)$ with $d$ entries equal to $\varepsilon$. According to Corollary \ref{cor:transitivity}, we can determine the type of $P$ studying its induced polarization as a subvariety of $k^*JC'$. Consider the isogenies
$$\tilde{\mu}:k^*h^*JC\times k^*P(h)\to k^*JC', \quad \mu:h^*JC\times P(h)\to JC'.$$
Since $k$ is an \'etale cyclic cover of degree $p$ and $h$ is ramified of prime degree, Proposition \ref{prop:type_prym} implies that the type of $\widetilde{\Theta}_{k^*JC'}$ is $(1,p,\dots,p)$ with $d$ entries equal to $p$, $\Theta'_{h^*JC}$ has type $(q)$ and $\Theta'_{P(h)}$ has type $(1,\dots,1,q)$. Moreover, since $\eta\in h^*JC[p]\subset JC'$, Lemma \ref{lemma_type} shows that the induced polarization on $k^*h^*JC=f^*JC$ has type $(q)$. Applying Lemma \ref{lemma:kernel} for $\tilde{\mu}$ we obtain 
\begin{equation}\label{eq01}
p^{2d}\,|\ker\tilde{\mu}|^2 = q^2\,\varepsilon^{2d}.
\end{equation}
and applying Lemma \ref{lemma:kernel} for $\mu$ we obtain
$$
|\ker\mu| = q^2.
$$

Since $h$ is ramified of degree $q$, then $h^*$ is injective and \cite[Proposition 3.2.4]{LR2022} implies that $\ker\Nm_h$ is connected. Note that, since $\eta=h^*\xi$ with $\xi\in JC[p]$, 
$$\Nm_h(\eta)=\Nm_h(h^*\xi)=\deg(h)\cdot \xi=q\xi.$$
Suppose that $p\neq q$. Since $\xi$ is a $p$-torsion point of $JC$, $\xi\neq 0$ implies that $q\xi\neq 0$, thus $\eta\notin P(h)$, $\ker k^*\cap \ker \mu=0$ and then according to Lemma \ref{lemma:kernel_mu_tilde} $\ker \tilde\mu\simeq \ker \mu$. Therefore \eqref{eq01} implies that $\varepsilon^{2d}=p^{2d}q^2$, which is impossible since $d\ge 2$. Hence $p=q$.

Then $\Nm_h(\eta)=p\xi=0$, and since $\ker\Nm_h$ is connected, we conclude that $\eta\in P(h)$. Then Lemma \ref{lemma:kernel_mu_tilde} implies that $|\ker\tilde{\mu}|=p$, and thus $\ker \tilde{\mu}\simeq \mathbb{Z}_p$. Then \eqref{eq01} implies that $p=\varepsilon$.

For the converse, suppose that $p=q$ and that $\exp K(\widetilde\Theta_P)=p$. Then the type of the induced polarization $\widetilde{\Theta}_P$ is $(1,\dots,1,p,\dots,p)$, with $r$ entries equal to $1$ and $d-r$ entries equal to $p$. By Lemma \ref{lemma:kernel} applied to $\tilde{\mu}$, we obtain that
$$
|\ker\tilde{\mu}| = p^{1-r}.
$$
As above, since $p=q$, we have $\eta\in P(h)$ and Lemma \ref{lemma:kernel_mu_tilde} gives $\ker\tilde\mu\simeq \mathbb Z_p$. We conclude that $r=0$ which implies that $\widetilde{\Theta}_P$ is $p$ times a principal polarization.
\end{proof}

Now let $f:\widetilde C\to C$ be an \'etale cover of degree $d$. Since $\ker f^*\subset JC[d]=K(d\Theta_C)$, the Weil pairing $e^{d\Theta_C}$ gives a natural notion of isotropy for the cover. Following \cite{BoOr20}, we say that $f$ is \emph{isotropic} if $\ker f^*$ is isotropic with respect to $e^{d\Theta_C}$; otherwise, $f$ is called \emph{non-isotropic}.

We now consider covers $f:\widetilde C\to C$ which factorize as $f=h\circ k$, where $k:\widetilde C\to C'$ and $h:C'\to C$ are cyclic \'etale covers of degrees $p$ and $q$, respectively, with $p,q$ odd primes. If $p\neq q$, then $f$ is cyclic of degree $pq$, and $\ker f^*\subset JC[pq]$ is a cyclic subgroup of order $pq$. Hence $f$ is automatically isotropic.

Thus the distinction between isotropic and non-isotropic covers becomes relevant for the elementary abelian case $p=q$, namely for \'etale Galois covers with group $\mathbb Z_p\times\mathbb Z_p$. In this case $\ker f^*\subset JC[p]$ is isomorphic to $\mathbb Z_p\times\mathbb Z_p$, and the two possibilities give different polarization types for $P(f)$, as described below.

The following result is a direct generalization of Lemma 2.5 in
\cite{BoSh25}.

\begin{lemma}\label{lem:type_p2}
Let $f:\widetilde{C}\to C$ be an \'etale Galois cover with Galois group $G=\mathbb{Z}_p\times \mathbb{Z}_p$, where $p$ is an odd prime, and let $C$ be a curve of genus $g\ge 2$. Let $P(f)$ be the Prym variety of $f$, endowed with the polarization induced by $\widetilde{\Theta}$. Then:

\begin{enumerate}
\item if the covering $f$ is isotropic, the induced polarization on $P(f)$ has
type
$$
(1,\ldots,1,\; p,p,\; p^2,\ldots,p^2),
$$
with $g-2$ entries equal to $p^2$;

\item if the covering $f$ is non--isotropic, the induced polarization on $P(f)$
has type
$$
(1,\ldots,1,\; p^2,\ldots,p^2),
$$
with $g-1$ entries equal to $p^2$.
\end{enumerate}
\end{lemma}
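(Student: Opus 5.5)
The plan is to reduce everything to a computation inside the finite group $JC[p^2]$, via Proposition \ref{prop:kernel_cover}. Here $d=\deg f=p^2$. Since $f$ is étale Galois with group $\mathbb Z_p\times\mathbb Z_p$, we have $K:=\ker f^*\simeq \mathbb Z_p\times\mathbb Z_p\subset JC[p]$. Proposition \ref{prop:kernel_cover} then gives
$$K(\widetilde\Theta_{P(f)})\simeq K^{\perp_{p^2\Theta_C}}/K .$$
The type of a polarization is determined by the isomorphism class of the finite group $K(\widetilde\Theta_{P(f)})$. Its number of entries equals $\dim P(f)=g_{\widetilde C}-g=(p^2-1)(g-1)$, which is $\ge g$ for $g\ge 2$. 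So it suffices to compute the abstract group $K^{\perp}/K$ and pad with $1$'s.

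\textbf{Step 1: rewrite the orthogonal in terms of $e_p$.} For $x\in JC[p^2]$ and $k\in K\subset JC[p]$, the compatibility of Weil pairings gives $e_{p^2}(x,k)=e_p(px,k)$. Hence
$$K^{\perp_{p^2\Theta_C}}=\{x\in JC[p^2]\;:\; px\in K^{\perp_p}\},$$
where $K^{\perp_p}\subset JC[p]$ is the orthogonal for the ordinary Weil pairing $e_p$. Note that $JC[p]$ itself is isotropic for $e_{p^2}$. The meaningful isotropy notion for $K$ is therefore isotropy for $e_p$, i.e.\ for the pairing of $p\Theta_C$, and I will interpret the dichotomy in the statement this way.

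\textbf{Step 2: normal form.} This is the step I expect to be the main technical point. I will choose a symplectic basis $a_1,\dots,a_g,b_1,\dots,b_g$ of $JC[p^2]\simeq(\mathbb Z_{p^2})^{2g}$ adapted to $K$. In the isotropic case I want $K=\langle pa_1,pa_2\rangle$; in the non-isotropic case I want $K=\langle pa_1,pb_1\rangle$.

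To get this, first put $K$ in normal form inside the symplectic $\mathbb F_p$-space $JC[p]$ using Witt's theorem, which handles both the isotropic plane and the nondegenerate plane. Then lift the resulting symplectic basis of $JC[p]$ to a symplectic basis of $JC[p^2]$, using the surjectivity of $\mathrm{Sp}_{2g}(\mathbb Z/p^2)\to \mathrm{Sp}_{2g}(\mathbb Z/p)$. Since $JC[p]=p\,JC[p^2]$ and $pa_i,pb_i$ reduce to the chosen $\mathbb F_p$-basis, this produces the required form.

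\textbf{Step 3: compute the quotients.}

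In the isotropic case, $K^{\perp_p}$ is $p$ times the span of all basis vectors except $b_1,b_2$. By Step 1,
$$K^\perp=\langle a_1,a_2,\;pb_1,pb_2\rangle\oplus\bigoplus_{i\ge3}\langle a_i,b_i\rangle .$$
Dividing by $K=\langle pa_1,pa_2\rangle$ gives
$$K^\perp/K\simeq \mathbb Z_p^2\times\mathbb Z_p^2\times(\mathbb Z_{p^2})^{2(g-2)},$$
which corresponds to the type $(1,\dots,1,p,p,p^2,\dots,p^2)$ with $g-2$ entries equal to $p^2$.

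In the non-isotropic case, the same computation gives
$$K^\perp=\langle pa_1,pb_1\rangle\oplus\bigoplus_{i\ge2}\langle a_i,b_i\rangle .$$
Hence $K^\perp/K\simeq(\mathbb Z_{p^2})^{2(g-1)}$, which gives the type $(1,\dots,1,p^2,\dots,p^2)$ with $g-1$ entries equal to $p^2$.

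As a consistency check, the orders $p^{4g-4}$ in both cases agree with $|JC[p^2]|/|K|^2=p^{4g}/p^4$ from Proposition \ref{prop:kernel_cover}.
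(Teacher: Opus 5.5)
Your proposal is correct and follows essentially the same route as the paper: reduce via Proposition~\ref{prop:kernel_cover} to computing $K^{\perp}/K$ inside $JC[p^2]$, put $K=\ker f^*$ in the normal forms $\langle pa_1,pa_2\rangle$ and $\langle pa_1,pb_1\rangle$, and read off the same quotients. Your Step~2 (Witt's theorem plus lifting along $\mathrm{Sp}_{2g}(\mathbb Z/p^2)\to\mathrm{Sp}_{2g}(\mathbb Z/p)$) supplies the justification for the ``symplectic change of basis'' that the paper simply asserts, and your observation that $JC[p]$ is totally isotropic for $e_{p^2}$ is right: the isotropic/non-isotropic dichotomy only makes sense with respect to $e_p$, which is what the paper's normal forms implicitly use.
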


\begin{proof}
Let $e^{p^2\Theta_C}$ denote the Weil pairing on $JC[p^2]$. As in \cite[Proposition 3.2.9]{LR2022}, one has $K(\Theta_{P(f)}) \simeq (\ker f^{*})^{\perp_{p^2\Theta_C}}/\ker f^{*}$.

Choose symplectic generators $e_1,f_1,\ldots,e_g,f_g$ of $JC[p^2]$ such that $e^{p^2\Theta_C}(e_i,f_i)=\exp(2\pi i/p^2)$ for $i=1,\ldots,g$.

In the isotropic case, up to a symplectic change of basis we may assume $\ker f^{*}=\langle p e_1,\; p e_2\rangle$. Then $(\ker f^{*})^{\perp_{p^2\Theta_C}}=\langle e_1,\; p f_1,\; e_2,\; p f_2,\; e_3,\; f_3,\;\ldots,\; e_g,\; f_g\rangle$, and hence
$$K(\Theta_{P(f)})\simeq (\ker f^{*})^{\perp_{p^2\Theta_C}}/\ker f^{*}
\simeq \mathbb{Z}_p^2\times \mathbb{Z}_p^2 \times (\mathbb{Z}_{p^2})^{2(g-2)},
$$
which corresponds to polarization type $(1,\ldots,1,p,p,p^2,\dots, p^2)$ with $g-2$ entries equal to $p^2$. 

In the non-isotropic case, similarly we may assume $\ker f^{*}=\langle p e_1,\; p f_1\rangle$.  Then
$$(\ker f^{*})^{\perp_{p^2\Theta_C}}=\langle p e_1,\; p f_1,\; e_2,\; f_2,\;\ldots,\; e_g,\; f_g\rangle,$$
and thus
$$K(\Theta_{P(f)})\simeq(\ker f^{*})^{\perp_{p^2\Theta_C}}/\ker f^{*}
\simeq (\mathbb{Z}_{p^2})^{2(g-1)},
$$
which corresponds to polarization type $(1,\ldots,1,p^2,\dots,p^2)$ with $g-1$ entries equal to $p^2$.
\end{proof}

\begin{theorem}\label{lemma:PTlemmagenus2}
Let $f:\widetilde{C}\to C$ be an isotropic cover of smooth projective curves, where $C$ is a curve of genus $2$ and suppose that it factorizes as $f=h\circ k$, with $k:\widetilde{C}\to C'$ and $h:C'\to C$ \'etale covers of degree $p$ with $p$ an odd prime. Then $P=k^*P(h)$ is a Prym--Tyurin variety of exponent $p$.
\end{theorem}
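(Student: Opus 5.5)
We compute the type of $\widetilde\Theta_P$ by combining Lemma \ref{lem:type_p2} with Lemma \ref{lemma:kernel_mu}, working inside $J\widetilde C$.

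\textbf{Step 1: dimensions.} Since $C$ has genus $2$ and $h$, $k$ are étale of degree $p$, Riemann--Hurwitz gives $g(C')=p+1$ and $g(\widetilde C)=p^2+1$. Hence $\dim P(h)=p-1$, and $\dim P=p-1$ because $k^*$ has finite kernel. Note that $f$ is Galois with group $\mathbb Z_p\times\mathbb Z_p$: $\ker f^*$ has order $p^2$ and is $p$-torsion, so it is not cyclic.

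\textbf{Step 2: decompose $J\widetilde C$ isogenously into three orthogonal pieces.} These are $B_1=f^*JC$, $B_2=P=k^*P(h)$ and $B_3=P(k)$. They are pairwise orthogonal for the norm endomorphisms, since they correspond to the group-algebra idempotents of $\mathbb Z_p\times\mathbb Z_p$ attached to the trivial character, to the characters nontrivial on $G$ but trivial on $\ker(k)$'s Galois group, and to the remaining characters. Moreover $B_1+B_2=k^*JC'$ is the complement of $P(k)$, so $B_2$ is the complement of $B_1$ inside $k^*JC'$.

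\textbf{Step 3: use $P(f)$ to read off the polarization.} The complement of $B_1$ in $J\widetilde C$ is $P(f)$, which is isogenous to $B_2\times B_3$. By Corollary \ref{cor:transitivity}, the polarization on $P(f)$ is the product polarization, and the addition map $B_2\times B_3\to P(f)$ is injective on neither factor; its kernel is $B_2\cap B_3$. By Lemma \ref{lem:type_p2}(1) with $g=2$, the type of $\widetilde\Theta_{P(f)}$ is $(1,\dots,1,p,p)$, so $K(\widetilde\Theta_{P(f)})\simeq\mathbb Z_p^4$ has exponent $p$.

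Lemma \ref{lemma:exponents}, applied to the pair $(B_2,B_3)$ in $P(f)$ together with Proposition \ref{prop:kernel}, gives exponent bounds. Separately, $B_2\subset k^*JC'$ and Proposition \ref{prop:type_prym}(a) for $k$ give $K(\widetilde\Theta_{k^*JC'})$ of type $(1,p,\dots,p)$. Lemma \ref{lemma:kernel} applied to $\tilde\mu\colon B_1\times B_2\to k^*JC'$ then gives
$$|K(\widetilde\Theta_{B_1})|\cdot|K(\widetilde\Theta_P)|=|\ker\tilde\mu|^2\,p^{2(p+1)-2}.$$
Here $|K(\widetilde\Theta_{B_1})|$ comes from Proposition \ref{prop:kernel_cover}: it equals $|JC[p^2]|/p^4=p^4$. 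Also $\ker\tilde\mu\simeq\ker\mu/(\ker k^*\cap\ker\mu)$ by Lemma \ref{lemma:kernel_mu_tilde}. Here $|\ker\mu|=|P(h)\cap h^*JC|=p^{4}/p^2=p^2$ by Proposition \ref{prop:kernel_cover}. Whether $\eta\in h^*JC$ is forced by the Galois (abelian) structure: $\eta$ is a pullback of $\xi$, and $\eta\in P(h)$ because $\Nm_h\eta=p\xi=0$ although $\ker\Nm_h$ may be disconnected in the étale case. This part needs care; I expect $|\ker\tilde\mu|=p$. That yields $|K(\widetilde\Theta_P)|=p^{2+2p-4}=p^{2(p-1)}$.

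\textbf{Step 4: conclude.} Given $|K(\widetilde\Theta_P)|=p^{2\dim P}$, it suffices to show $\exp K(\widetilde\Theta_P)\mid p$, since the type is then $(p,\dots,p)$. For the exponent, use Lemma \ref{lemma:exponents} inside $k^*JC'$: the exponent of $K(\widetilde\Theta_{k^*JC'})$ is $p$ and $\exp\ker\tilde\mu=p$. This gives only $e_P\mid p^2$. To sharpen it to $p$, use the $P(f)$ picture of Step 3, where $K$ has exponent $p$ and the relevant kernel $B_2\cap B_3$ lies in the $p$-torsion.

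\textbf{Main obstacle.} The hardest part is exactly this exponent bound, together with the precise computation of $\ker\tilde\mu$ in the étale setting, where $\ker\Nm_h$ has two components' worth of subtlety and $h^*$ is not injective. There I would work directly with the Weil pairing, as in Lemma \ref{lem:type_p2}, using isotropy of $\ker f^*$.
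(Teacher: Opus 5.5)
Your Step 3 count is the same as the paper's: Lemma \ref{lemma:kernel} for $\tilde\mu\colon f^*JC\times P\to k^*JC'$, with $|K(\widetilde\Theta_{f^*JC})|=p^4$, $|K(\widetilde\Theta_{k^*JC'})|=p^{2p}$ and $|\ker\tilde\mu|=p$, gives $|K(\widetilde\Theta_P)|=p^{2(p-1)}$. You are also right that it then suffices to show $\exp K(\widetilde\Theta_P)\mid p$. (The type of $\widetilde\Theta_{k^*JC'}$ comes from Proposition \ref{prop:kernel_cover}, not from Proposition \ref{prop:type_prym}(a), since $k$ is itself cyclic \'etale.) But this exponent bound is exactly what your proposal does not prove, and the route you suggest cannot prove it. Lemma \ref{lemma:exponents} bounds $e_P$ by the \emph{product} $\exp K(L)\cdot\exp(\ker\mu)$. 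Applied to the pair $(P,P(k))$ inside $P(f)$, it gives at best $e_P\mid p\cdot p=p^2$, even granting that $P\cap P(k)$ is $p$-torsion. That is no better than the bound you already had in $k^*JC'$, so the detour through $P(k)$ and $P(f)$ gains nothing.

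The missing idea is descent along the isogeny $u:=k^*|_{P(h)}\colon P(h)\to P$. If $\eta\in P(h)$, then $\ker u=\langle\eta\rangle$ is a subgroup of order $p$ of $K(\Theta'_{P(h)})=h^*JC\cap P(h)\simeq(\mathbb Z_p)^2$, hence maximal isotropic for $e^{\Theta'_{P(h)}}$. So $\Theta'_{P(h)}$ descends to a polarization $M$ on $P$ with $K(M)\simeq\langle\eta\rangle^{\perp}/\langle\eta\rangle=0$, i.e.\ $M$ is principal. Since $u^*\widetilde\Theta_P\equiv p\,\Theta'_{P(h)}\equiv u^*(M^{\otimes p})$ and $u$ is an isogeny, $\widetilde\Theta_P\equiv M^{\otimes p}$, which is the theorem. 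This is how the paper obtains the exponent (via Proposition \ref{prop:exponent_descended_polarization}); the count then only serves as a check.

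The other open point, $\eta\in P(h)$, you rightly flag: $\Nm_h\eta=0$ is not enough, because $\ker\Nm_h$ is disconnected for \'etale $h$. The paper cites \cite[Lemma 2.10]{BoSh25}. With the tools at hand, Proposition \ref{prop:kernel_cover} applied to $h$ says that $h^*$ maps $(\ker h^*)^{\perp}\subset JC[p]$ onto $h^*JC\cap P(h)$. Hence $\eta=h^*\xi\in P(h)$ as soon as $\xi$ is Weil-orthogonal to the generator of $\ker h^*$, which is exactly the isotropy condition on $\ker f^*=\langle\ker h^*,\xi\rangle$ for the Weil pairing on $JC[p]$. This same fact is what makes $|\ker\tilde\mu|=p$ rather than $p^2$, so without it your Step 3 count is not established either.
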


\begin{proof}
By \eqref{eq:RH_formula}, the genus of $\widetilde C$ is $p^{2}+1$ and the genus of $C'$ is $p+1$. Hence $\dim P = g(C')-g(C)=p-1.$

According to \cite[Lemma 2.10]{BoSh25}, in the isotropic case $\eta\in P(h)$. Set $u:=k^*_{|P(h)}:P(h)\to P$. Then $\ker u=\langle\eta\rangle$. Since $\eta\in h^*JC\cap P(h)=K(\Theta'_{P(h)})$ by Proposition \ref{prop:kernel_cover}, and $K(\Theta'_{P(h)})\simeq(\mathbb Z/p)^2$, the subgroup $\langle\eta\rangle$ is maximal isotropic with respect to $e^{\Theta'_{P(h)}}$. Thus $\Theta'_{P(h)}$ descends to a polarization $M$ on $P$. Moreover, \cite[Proposition 3.2.1(a)]{LR2022} gives $u^*\widetilde\Theta_P=(\Theta'_{P(h)})^{\otimes p}$, hence $\widetilde\Theta_P=M^{\otimes p}$. By Proposition \ref{prop:exponent_descended_polarization}, $\exp K(\widetilde\Theta_P)=p$. Thus $\widetilde{\Theta}_P$ has type
$$
(1,\ldots,1,p,\ldots,p),
$$
with $r$ entries equal to $1$ and $p-1-r$ entries equal to $p$. Therefore
$$
|K(\widetilde\Theta_{P})| = p^{2(p-1-r)}.
$$

Moreover, by Lemma \ref{lem:type_p2}, the induced polarization on $k^{*}h^{*}JC$ has type $(p,p)$. Consider the isogeny
$$
\tilde{\mu}: k^{*}h^{*}JC \times P \longrightarrow k^*JC'.
$$
Applying Lemma \ref{lemma:kernel} to $\tilde{\mu}$, we obtain that
$$
|\ker\tilde{\mu}| = p^{1-r}.
$$

Lemma \ref{lemma:kernel_mu_tilde} implies that $\ker \tilde{\mu}\simeq \mathbb{Z}_p$, so we conclude that $r=0$ and then $P$ is a Prym--Tyurin variety of exponent $p$.
\end{proof}

The following result shows that in other covers the subvariety $P$ is not a Prym--Tyurin variety for $\widetilde C$.

\begin{proposition}\label{prop:exp_p2_general_genus}
Let $p$ be an odd prime and let $f=h\circ k:\widetilde C\to C$ be a cover of smooth projective curves, where $k:\widetilde C\to C'$ is an \'etale cyclic cover of degree $p$ and $h:C'\to C$ is a cyclic cover of degree $p$. Let $g$ be the genus of $C$ and set $P:=k^*P(h)\subset J\widetilde C$. If $g\ge3$, then the polarization induced by $\widetilde\Theta$ on $P$ has exponent $p^2$, and it is not equal to $p^2$ times a principal polarization.
\end{proposition}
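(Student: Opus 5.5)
The plan is to compute $K(\widetilde\Theta_P)$ from the restriction of $k^*$ to $P(h)$, as in the proof of Theorem \ref{lemma:PTlemmagenus2}, and then read off both the exponent and the failure of the polarization to be a multiple of a principal one.

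\textbf{Step 1: the kernel of $u$.} Set $u:=k^*|_{P(h)}:P(h)\to P$. Then $\ker u=\ker k^*\cap P(h)$, which is either trivial or $\langle\eta\rangle\simeq\mathbb Z_p$. By \cite[Proposition 3.2.1(a)]{LR2022} we have $u^*\widetilde\Theta_P=(\Theta'_{P(h)})^{\otimes p}$. Consequently
$$|K(\widetilde\Theta_P)|=\frac{p^{2\dim P}\,|K(\Theta'_{P(h)})|}{|\ker u|^2}.$$

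\textbf{Step 2: the type of $\Theta'_{P(h)}$.} This comes from Proposition \ref{prop:type_prym}. If $h$ is ramified, the type is $(1,\dots,1,p,\dots,p)$ with $g$ entries equal to $p$. If $h$ is étale, the type is $(1,\dots,1,p,\dots,p)$ with $g-1$ entries equal to $p$. In either case, by Proposition \ref{prop:kernel_cover}, $K(\Theta'_{P(h)})=h^*JC\cap P(h)\subset P(h)[p]$ has order at least $p^{2(g-1)}\ge p^4$ when $g\ge3$.

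\textbf{Step 3: the exponent.} The descent description gives $K(\widetilde\Theta_P)\simeq \ker(u)^{\perp}/\ker u$, where $\perp$ is taken with respect to the Weil pairing of $(\Theta'_{P(h)})^{\otimes p}$ on $P(h)[p]+K(\Theta'_{P(h)})$, that is, inside $u^{-1}(K(\widetilde\Theta_P))$. Since $K(\Theta'_{P(h)})\cong(\mathbb Z_p)^{2s}$ with $s\ge g-1\ge2$, the group $K((\Theta'_{P(h)})^{p})$ contains elements of order $p^2$. These are the preimages under multiplication by $p$ of nonzero points of $K(\Theta'_{P(h)})$. Quotienting by a subgroup of order at most $p$ cannot kill all of them: the orthogonal of $\langle\eta\rangle$ still contains $x$ with $px\in K(\Theta'_{P(h)})\setminus\langle\eta\rangle$, because $s\ge2$ leaves a complementary symplectic plane. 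So $K(\widetilde\Theta_P)$ has an element of order $p^2$. It is also killed by $p^2$, because $\ker u\subset P(h)[p]$ and everything lies in $P(h)[p^2]$. Hence the exponent is exactly $p^2$. This is where $g\ge 3$ is used; for $g=2$ the isotropic plane absorbs everything, matching Theorem \ref{lemma:PTlemmagenus2}.

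\textbf{Step 4: not $p^2$ times a principal polarization.} If $\widetilde\Theta_P$ had type $(p^2,\dots,p^2)$, then $|K|=p^{4\dim P}$. Comparing with Step 1 gives $p^{2\dim P}|K(\Theta'_{P(h)})|/|\ker u|^2\le p^{2\dim P}p^{2g}$. Since $\dim P\ge g(C')-g\ge (p-1)(g-1)>g$ in the relevant cases (by Riemann--Hurwitz), this is a contradiction. Alternatively, Lemma \ref{lemma:kernel} applied to $\tilde\mu$ with Lemma \ref{lemma:kernel_mu_tilde} bounds $|K(\widetilde\Theta_P)|$ in the same way.

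The main obstacle is the symplectic bookkeeping in Step 3, which shows that the order-$p^2$ elements survive the quotient by $\ker u$. I would do it with explicit symplectic bases of $P(h)[p^2]$, as in the proof of Lemma \ref{lem:type_p2}, splitting into the cases $\eta\in P(h)$ and $\eta\notin P(h)$.
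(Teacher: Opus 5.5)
Your proof is correct, but it follows a different route from the paper.

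\textbf{How the routes differ.} The paper never descends along $u=k^*|_{P(h)}$. It proceeds in three moves:
\begin{enumerate}
\item Lemma \ref{lemma:exponents} restricts the elementary divisors of $\widetilde\Theta_P$ to $\{1,p,p^2\}$.
\item Lemma \ref{lemma:kernel}, applied to the addition map $\tilde\mu\colon f^*JC\times P\to k^*JC'$, gives $|K(\widetilde\Theta_P)|=p^{2v+4w}$ with $v+2w\in\{g'-2,g'-1\}$. Here $v,w$ are the multiplicities of $p,p^2$ and $g'=g(C')$.
\item A pigeonhole argument finishes. Since $v+2w>\dim P=g'-g$ for $g\ge3$, we get $w\ge1$. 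Type $(p^2,\dots,p^2)$ would force $v+2w=2\dim P$.
\end{enumerate}
You instead get the order of $K(\widetilde\Theta_P)$ from $u^*\widetilde\Theta_P=(\Theta'_{P(h)})^{\otimes p}$. You get the exponent by exhibiting classes of order $p^2$ in $\ker(u)^{\perp}/\ker u$.

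\textbf{What each route buys.} Your route gives a uniform count. Your formula yields $|K(\widetilde\Theta_P)|=p^{2g'-2\epsilon}$, where $\epsilon\in\{0,1,2\}$ counts how many of ``$h$ is \'etale'' and ``$\eta\in P(h)$'' hold. The $\tilde\mu$ computation instead rests on Lemma \ref{lemma:kernel_mu_tilde}, which assumes $\eta=h^*\xi\in h^*JC$. The paper's two-case display is not accurate in every case. For instance, for ramified $h$ and $\eta\notin P(h)$ the true value is $p^{2g'}$, not $p^{2g'-4}$. This is harmless there, since only the lower bound is used.

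The paper's route, in turn, makes your Step 3 unnecessary. Your Step 1 already gives $|K(\widetilde\Theta_P)|\ge p^{2\dim P+2g-4}$, that is, $v+2w\ge \dim P+g-2>\dim P$. The pigeonhole then forces an elementary divisor $p^2$ with no symplectic bookkeeping.

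\textbf{Corrections if you keep Step 3.}
\begin{enumerate}
\item The ambient group is $K((\Theta'_{P(h)})^{\otimes p})=[p]^{-1}K(\Theta'_{P(h)})$. It is not $P(h)[p]+K(\Theta'_{P(h)})$, which is just $P(h)[p]$.
\item The case that genuinely needs $s\ge2$ is $\eta\in K(\Theta'_{P(h)})$. There $e^{p\Theta'_{P(h)}}(x,\eta)=e^{\Theta'_{P(h)}}(px,\eta)$ depends only on $px$. So you need $px\in\eta^{\perp}\setminus\langle\eta\rangle$ inside $K(\Theta'_{P(h)})\cong\mathbb F_p^{2s}$, which is possible because $2s-1\ge3$.
\item If $\eta\notin K(\Theta'_{P(h)})$, the character $e^{p\Theta'_{P(h)}}(\cdot,\eta)$ is non-trivial on $P(h)[p]$. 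Any lift of a non-zero point of $K(\Theta'_{P(h)})$ can then be corrected by $P(h)[p]$ to be orthogonal to $\eta$.
\end{enumerate}

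\textbf{A citation fix.} For \'etale $h$, the type of $\Theta'_{P(h)}$ comes from Proposition \ref{prop:kernel_cover}. Proposition \ref{prop:type_prym} does not cover it, since its part (b) assumes $d_1<d$.
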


\begin{proof}
By Lemma \ref{lemma:exponents}, the elementary divisors of $\widetilde\Theta_P$ belong to $\{1,p,p^2\}$ and $\exp K(\widetilde\Theta_P)$ divides $p^2$. Let $u$, $v$ and $w$ denote respectively the number of times that $1$, $p$ and $p^2$ occur among the elementary divisors of $\widetilde\Theta_P$ and denote by $g'$ the genus of $C'$. Then
$$
u+v+w=\dim P=g'-g
\quad\text{and}\quad
|K(\widetilde\Theta_P)|=p^{2v+4w}.
$$

Applying Lemma \ref{lemma:kernel} to the addition map $\widetilde\mu:f^*JC\times P\to k^*JC'$ and using the computation of the relevant kernels obtained above, we get
$$
|K(\widetilde\Theta_P)|=
\begin{cases}
p^{2g'-4}, & \text{if $h$ is \'etale or $|\ker f^*|=1$,}\\
p^{2g'-2}, & \text{if $h$ is ramified and $|\ker f^*|=p$.}
\end{cases}
$$
Hence
\begin{equation}\label{eq:dimP}
v+2w=
\begin{cases}
g'-2,& \text{if $h$ is \'etale or $|\ker f^*|=1$,}\\
g'-1,& \text{if $h$ is ramified and $|\ker f^*|=p$.}
\end{cases}
\end{equation}

Since $\dim P=g'-g$, if $g\ge3$ then $v+2w>\dim P$. If $w=0$, then $v\le u+v=\dim P$, a contradiction. Thus $w\ge1$, and therefore $\exp K(\widetilde\Theta_P)=p^2$.

Finally, if $\widetilde\Theta_P$ were equal to $p^2$ times a principal polarization, then $u=v=0$, so that $v+2w=2\dim P$, which is incompatible with \eqref{eq:dimP} when $g\ge3$.
\end{proof}

\section{Prym--Tyurin varieties coming from Galois covers}\label{section4}
As a consequence of Theorem \ref{lemma:PTlemma}, the search for Prym--Tyurin varieties arising as a Prym of an intermediate covering of the Jacobian of a curve endowed with a group action by $G$ naturally leads to the study of Galois covers of elliptic curves whose Galois group contains a subgroup isomorphic to $\mathbb{Z}_p \times \mathbb{Z}_p$. In this section, we investigate in more detail such Galois covers. 
\subsection{Prym--Tyurin varieties via  $\mathbb{Z}_p\times \mathbb{Z}_p$ action}
Let $$G_p=\left\langle a,b\,\big|\,a^p=b^p=1,ab=b a\right\rangle\simeq \mathbb{Z}_p\times \mathbb{Z}_p$$ with $p$ an odd prime number, and let $f:\widetilde{C}\to C$ be a Galois covering with $G_p$ as its Galois group. Let $C_g$ denote the quotient curve $\widetilde{C}/\left\langle g\right\rangle$, with $g\in G_p$. Consider $X=\{a,b,ab,a^2b,\dots,a^{p-1}b\}$, a set of  generators for the non-trivial cyclic subgroups of $G_p$. The subcoverings of $\widetilde{C}$ are as follows:
 \begin{figure}[hbt!]
    \centering 
    \begin{tikzcd}[row sep=1.6cm, column sep=0.8cm]
    & & & \widetilde{C}\ar[dlll,"k_a"]\ar[dl,"k_b"]\ar[dr,"k_{ab}"]\ar[drrrrr,"k_{a^{p-1}b}"]& & & & \\
    C_a\ar[drrr,"h_a"]& &C_b\ar[dr,"h_b"] & & C_{ab}\ar[dl,"h_{ab}"] & &\dots & &C_{a^{p-1}b}\ar[dlllll,"h_{a^{p-1}b}"] \\
    & & & C& & & & &\\
    \end{tikzcd}
    \caption{Subcoverings of $\widetilde{C}$ by the action of $G_p$.}
    \label{fig:subcoverings_C3xC3}
\end{figure} 

We have the following result.
\begin{proposition}\label{prop:iso_decomposition}
The Jacobian $J\widetilde C$ of $\widetilde{C}$ decomposes as
$$J\widetilde C\sim JC\times \prod_{g\in X}P(h_g).$$
\end{proposition}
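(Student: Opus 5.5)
This is the standard group-algebra decomposition, specialised to an abelian group. We work in $\operatorname{End}(J\widetilde C)\otimes\mathbb Q$, where $G_p$ acts through $\mathbb Q[G_p]\to \operatorname{End}_{\mathbb Q}(J\widetilde C)$. For a subgroup $H\le G_p$ put
$$
e_H=\frac1{|H|}\sum_{h\in H}h .
$$
This is a symmetric idempotent, so its image is an abelian subvariety. It is known (e.g.\ \cite{LR2022}) that $\operatorname{Im}(e_H)=k_H^*J(\widetilde C/H)$. In particular, $e_{G_p}$ has image $f^*JC$ and $e_{\langle g\rangle}$ has image $k_g^*JC_g$.

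The key step is the identity of idempotents
$$
1=e_{G_p}+\sum_{g\in X}\bigl(e_{\langle g\rangle}-e_{G_p}\bigr).
$$
To check it, evaluate both sides on each character $\chi$ of $G_p$; an idempotent acts on the $\chi$-isotypic part by $\chi(e_H)$.
\begin{itemize}
\item For $\chi=1$, every $e_H$ acts as $1$, so the right-hand side is $1+(p+1)(1-1)=1$.
\item For $\chi\neq1$, the kernel $\ker\chi$ is exactly one of the $p+1$ cyclic subgroups $\langle g\rangle$ with $g\in X$.
\item Then $\chi(e_{\langle g\rangle})=1$ for that single $g$, and it vanishes for the others. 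Also $\chi(e_{G_p})=0$, so the right-hand side equals $1$.
\end{itemize}
The same character computation shows that the idempotents $\epsilon_g:=e_{\langle g\rangle}-e_{G_p}$ and $e_{G_p}$ are pairwise orthogonal and symmetric. Hence they give a decomposition up to isogeny:
$$
J\widetilde C\sim \operatorname{Im}(e_{G_p})\times\prod_{g\in X}\operatorname{Im}(\epsilon_g).
$$

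Next we identify the factors. The first factor is $f^*JC$, which is isogenous to $JC$ because $f^*$ has finite kernel. For the others, $\operatorname{Im}(\epsilon_g)\subset k_g^*JC_g$, and inside $k_g^*JC_g$ it is the complement of $f^*JC=k_g^*h_g^*JC$. This follows from $e_{\langle g\rangle}=e_{G_p}+\epsilon_g$ with the two summands orthogonal. Pulling back via $k_g^*$, which is an isogeny onto its image, identifies this complement with $k_g^*P(h_g)$. Indeed, $\epsilon_g$ kills $f^*JC$ and its image is $\langle g\rangle$-invariant and killed by $\Nm$ down to $C$. This matches the definition of $P(h_g)$ as the complement of $h_g^*JC$ in $JC_g$, and Corollary~\ref{cor:transitivity} supports the compatibility of complements. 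Hence $\operatorname{Im}(\epsilon_g)\sim P(h_g)$, which gives the statement.

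The main obstacle is this last identification of $\operatorname{Im}(\epsilon_g)$ with $k_g^*P(h_g)$ rather than just with some abelian subvariety of the right dimension. Rather than argue it abstractly, I would do it concretely. Note that $k_g^*\Nm_{k_g}=\sum_{h\in\langle g\rangle}h=p\,e_{\langle g\rangle}$ and $f^*\Nm_f=p^2e_{G_p}$. Then $p\,\epsilon_g$ is $k_g^*\bigl(1-\tfrac1p h_g^*\Nm_{h_g}\bigr)\Nm_{k_g}$, and $1-\tfrac1p h_g^*\Nm_{h_g}$ is the rational projector of $JC_g$ onto $P(h_g)$. As a sanity check, the dimensions add up by Riemann--Hurwitz, $g_{\widetilde C}=g_C+\sum_g(g_{C_g}-g_C)$.
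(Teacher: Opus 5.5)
Your argument is correct. It yields the same isotypical decomposition as the paper, but the justification is more self-contained. The paper computes the rational character table of $G_p$ and shows that $\operatorname{Ind}_{\langle g\rangle}^{G_p}1=W_0\oplus W_g$. It then quotes \cite[Corollary 3.5.10]{LR2022} to get $P(h_g)\sim A_{W_g}$, so the statement follows from $J\widetilde C\sim\prod_W A_W$.

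Your $\epsilon_g=e_{\langle g\rangle}-e_{G_p}$ is exactly the central idempotent of $\mathbb Q[G_p]$ attached to $W_g$. Your identity $1=e_{G_p}+\sum_{g\in X}\epsilon_g$ is equivalent to $\sum_{g\in X}e_{\langle g\rangle}=1+p\,e_{G_p}$. This also follows at once from the fact that each non-trivial element of $G_p$ lies in exactly one subgroup of order $p$. It is the Kani--Rosen relation for the partition of $G_p$ into its $p+1$ cyclic subgroups.

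What replaces the citation of \cite{LR2022} is your factorization $p\,\epsilon_g=k_g^*\bigl(1-\tfrac1p h_g^*\Nm_{h_g}\bigr)\Nm_{k_g}$. It comes from $k_g^*\Nm_{k_g}=\sum_{h\in\langle g\rangle}h$ and $f^*\Nm_f=\sum_{h\in G_p}h$. Since $\Nm_{k_g}$ is surjective, it gives $\ima(\epsilon_g)=k_g^*P(h_g)$ as an equality of subvarieties of $J\widetilde C$, not just an isogeny. This makes your middle paragraph about complements and Corollary~\ref{cor:transitivity} unnecessary; that is the only loose part of the write-up, and you can drop it.

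Your route is elementary and gives the sharper statement that $k_g^*P(h_g)$ is itself the $W_g$-isotypical component. The paper uses this later without separate proof, for instance in Proposition~\ref{prop:degree_abel_prym}. The paper's route instead places the result in the general group-algebra framework of \cite{LR2022}, and it reuses that framework for the dimension count in Theorem~\ref{Theo1}.
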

\begin{proof}
The \, group \,$\mathbb{Z}_p$ \, has \, $p$ \, irreducible \, complex \, representations \, of \, degree \, $1$, \, denoted \, by \, $1,\omega,\dots,\omega^{p-1}$, where each of them maps the generator of the group to $\omega^j$ where $j=0,\dots,p-1$ and $\omega=e^{2\pi i/p}$. We have that the irreducible complex representations of $\mathbb{Z}_p\times \mathbb{Z}_p$ are given by $\{\omega^i\otimes \omega^j:0\leq i,j\leq p-1\}$.

Following \cite[Section 2.8]{LR2022}, we obtain the irreducible rational representations of $\Z_p\times \mathbb{Z}_p$:
$$ W_0=1\otimes 1,\quad W_{1\otimes \omega}=\bigoplus_{j=1}^{p-1}1\otimes \omega^j,\quad  W_{\omega\otimes \omega^i}=\bigoplus_{j=1}^{p-1}\omega^j\otimes \omega^{ij}:1\leq i\leq p$$

Thus, we have $p+2$ rational conjugacy classes in $G_p$. Actually, the rational conjugacy classes are the trivial class and the class of each element in $X$. Denote by $[g]$ the rational class of $g\in G_p$, then we have that the rational characters are given by:
\begin{table}[h!]
\centering
\small
\caption{Rational character table of the group $G_p$}
\label{table:C3xC3CharacterTable}
\begin{tabular}{>{$}c<{$}>{$}c<{$}>{$}c<{$}>{$}c<{$}>{$}c<{$}>{$}c<{$}>{$}c<{$}>{$}c<{$}>{$}c<{$}}\hline
G & 1 &  [a] &  [b] & [ab]  & \dots  &  [a^{-j}b] & \dots  & [a^{p-1}b]   \\\hline
W_0 & 1               & 1 & 1 & 1 &  & 1 & 1 & 1   \\
W_{1\otimes \omega} & p-1          & p-1 & -1 & -1 &  & -1 &-1 & -1     \\
W_{\omega\otimes 1} & p-1  & -1 & p-1 & -1 &   & -1 & -1 &-1   \\
\vdots &           &  &  &  & &  &  &   \\
W_{\omega\otimes \omega^j} & p-1  & -1 & -1 & -1 &  & p-1 &  & -1   \\
\vdots &  &  &  &  &  & &  &    \\
W_{\omega\otimes \omega^{p-1}} &  p-1  & -1 & -1 & -1 &  & -1 &   & p-1   \\\hline
\end{tabular}
\end{table}

On the other hand, it is not difficult to see that for each $g\in X$ the character $\chi_g$ of the representation induced on $G_p$ from the trivial representation of the subgroup generated by $g$ is given by 
$$\chi_g(h)=\begin{cases}
   p & \mbox{if } h=1\\
   p & \mbox{for each } h\mbox{{ in the rational class of }g,}\\
   0 & \mbox{otherwise.}
\end{cases}$$

Therefore we obtain that the representations induced on $G_p$ from the trivial representation of each subgroup of $G_p$ are given by
$$\rho_{a}=W_0\oplus W_{1\otimes \omega},\quad\rho_{a^{-j}b}=W_0\oplus W_{\omega\otimes \omega^j},\quad 1\leq j\leq p $$

and we have the following diagram

 \begin{figure}[H]
    \centering 
    \begin{tikzcd}[row sep=huge, column sep=-0.05cm]
    & & & W_0\oplus W_{1\otimes \omega}\oplus\dots \oplus W_{\omega\otimes \omega^{p-1}}\ar[dlll]\ar[dl]\ar[dr]\ar[drrrrr]& & & & \\
    W_0\oplus W_{1\otimes \omega}\ar[drrr]& &W_0\oplus W_{\omega\otimes 1}\ar[dr] & & W_0\oplus W_{\omega\otimes \omega} \ar[dl] & &\dots & &W_0\oplus W_{\omega\otimes \omega^{p-1}}\ar[dlllll] \\
    & & & W_0& & & & &\\
    \end{tikzcd}
    \caption{Induced representation by trivial representations of subgroups of $G_p$.}
    \label{fig:induced_C3xC3}
\end{figure} 

According to \cite[Corollary 3.5.10]{LR2022}, we have that 
$$P(h_g)\sim A_{W_g}.$$

This implies the assertion.
\end{proof}

\begin{remark}\label{rmk:vector}
Let $c_1,\dots,c_m\in\{1,\dots,p-1\}$ be such that
$$\sum_{j=1}^m c_j\equiv0\pmod p.$$
Then every tuple
$$v=(\alpha,\beta;a^{c_1},\dots,a^{c_m}),\qquad \alpha,\beta\in G_p,$$
such that $\langle a,\alpha,\beta\rangle=G_p$ is a generating vector of type $s=(1;p,\dots,p)$ with $m$ entries equal to $p$. Indeed, the product relation follows from the congruence, since $G_p$ is abelian, and the generating condition holds by assumption. Such vectors exist; for instance, one may take $\alpha=1$ and $\beta=b$. Therefore Theorem \ref{theo:Riemann:existence} gives a curve $\widetilde C$ of genus $\frac{m}{2}p(p-1)+1$ endowed with the desired $G_p$--action. Conversely, every generating vector of this type whose local monodromies are non--trivial powers of $a$ is of this form. Thus these are precisely the generating vectors producing the $\mathbb Z_p\times\mathbb Z_p$--actions required in the construction.
\end{remark}

We have the following result.
\begin{theorem}\label{Theo1}
For each odd prime $p$, each integer $m\ge 2$ and each generating vector $v$ as in Remark \ref{rmk:vector}, there exists an $m$--dimensional family of curves $\widetilde{C}$ of genus $\frac{m}{2}p(p-1)+1$ endowed with an action of $G_p$ with signature $s$ and generating vector $v$, such that the abelian subvarieties $k_g^*P(h_g)\subset J\widetilde{C}$, for $g\in X\setminus\{a\}$, are Prym--Tyurin varieties of exponent $p$ and dimension $\frac{m}{2}(p-1)$.
\end{theorem}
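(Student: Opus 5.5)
Existence and the family come from Remark \ref{rmk:vector} together with Theorem \ref{theo:Riemann:existence}: for the fixed generating vector $v$ of type $s=(1;p,\dots,p)$, Riemann's existence theorem gives curves $\widetilde C$ of genus $\frac m2 p(p-1)+1$ with a $G_p$--action of signature $s$ and monodromy $v$. Such actions are parametrized by the choice of the elliptic curve $C=\widetilde C/G_p$ together with $m$ branch points on it, modulo automorphisms of $C$. This gives $1+m-1=m$ parameters, since translations of $C$ remove one parameter. The family is therefore $m$--dimensional, namely the corresponding Hurwitz space component.

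For $g\in X\setminus\{a\}$ I would first record the geometry of the factorization $f=h_g\circ k_g$. Every local monodromy is a non-trivial power of $a$, so stabilizers of ramification points are $\langle a\rangle$. Since $\langle g\rangle\cap\langle a\rangle=1$, the cover $k_g:\widetilde C\to C_g$ is étale cyclic of degree $p$. The cover $h_g:C_g\to C$ is cyclic of degree $p$ and ramified exactly over the $m$ branch points, because $a\notin\langle g\rangle$ acts non-trivially on $C_g$. Riemann--Hurwitz then gives $g(C_g)=1+\frac m2(p-1)$, so $r(h_g)=m(p-1)\ge 4$ for $m\ge2$, and $\dim P(h_g)=\frac m2(p-1)$. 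The étale cover $k_g$ is also the pullback under $h_g$ of the étale $\mathbb Z_p$--cover of $C$ given by the quotient $\widetilde C/\langle a\rangle\to C$. Indeed, $\widetilde C/\langle a\rangle\to C$ is unramified, and $\widetilde C=C_g\times_C(\widetilde C/\langle a\rangle)$ because $\langle g\rangle\cap\langle a\rangle=1$ and $\langle g,a\rangle=G_p$. Consequently $k_g$ is defined by $\eta=h_g^*\xi$ for some $\xi\in JC[p]$, so $\eta\in h_g^*JC[p]$. All the hypotheses of Theorem \ref{lemma:PTlemma} then hold with $p=q$. The same holds for Lemma \ref{lemma:kernel_mu_tilde}, which gives $\eta\in P(h_g)$ and $\ker\tilde\mu\simeq\mathbb Z_p$.

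By the converse direction of Theorem \ref{lemma:PTlemma}, it remains to prove $\exp K(\widetilde\Theta_P)=p$ for $P=k_g^*P(h_g)$. I would argue as in the proof of Theorem \ref{lemma:PTlemmagenus2}. Let $u=k_g^*|_{P(h_g)}:P(h_g)\to P$, so $\ker u=\langle\eta\rangle$. By Proposition \ref{prop:type_prym}(a), $\Theta'_{P(h_g)}$ has type $(1,\dots,1,p)$. Hence $K(\Theta'_{P(h_g)})\simeq(\mathbb Z_p)^2$, and by Proposition \ref{prop:kernel_cover} this group equals $h_g^*JC\cap P(h_g)$, which contains $\eta$. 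The order-$p$ subgroup $\langle\eta\rangle$ is then automatically maximal isotropic, because the Weil pairing on $(\mathbb Z_p)^2$ is non-degenerate and alternating. Therefore $\Theta'_{P(h_g)}$ descends to a polarization $M$ on $P$. Since $u^*\widetilde\Theta_P=(\Theta'_{P(h_g)})^{\otimes p}$ by \cite[Proposition 3.2.1(a)]{LR2022}, we get $\widetilde\Theta_P=M^{\otimes p}$, at least up to algebraic equivalence. I expect the justification of this descent identity to be the main point needing care, specifically checking that $\ker u$ is exactly $\langle\eta\rangle$. Proposition \ref{prop:exponent_descended_polarization} then yields $\exp K(\widetilde\Theta_P)=p$.

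The type count then pins down the polarization, as in the final paragraph of the proof of Theorem \ref{lemma:PTlemma}. Lemma \ref{lemma:kernel} forces no entries equal to $1$, so $\widetilde\Theta_P$ is $p$ times a principal polarization. This makes $P$ a Prym--Tyurin variety of exponent $p$ and dimension $\frac m2(p-1)$.
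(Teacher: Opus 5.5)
Your proposal is correct and follows the paper's proof. You check the hypotheses of Theorem \ref{lemma:PTlemma} with $p=q$, use that $\langle\eta\rangle$ is maximal isotropic in $K(\Theta'_{P(h_g)})\simeq(\mathbb Z_p)^2$ to descend the polarization so that $\widetilde\Theta_P=M^{\otimes p}$ and $\exp K(\widetilde\Theta_P)=p$, and then conclude; the only differences are minor: you obtain $\dim P(h_g)$ directly from Riemann--Hurwitz instead of the group-algebra decomposition, and you get $\eta=h_g^*\xi$ by identifying $\widetilde C\simeq C_g\times_C C_a$ (so $k_g$ is the base change of the \'etale cover $h_a$) rather than from the non-injectivity of $f^*$.
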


\begin{proof}
Let $\widetilde C$ be a curve with the $G_p$-action described in the statement, and set $E:=\widetilde C/G_p$. By Riemann--Hurwitz, $g(\widetilde C)=\frac{m}{2}p(p-1)+1$. Using \cite[Corollary 3.5.17]{LR2022}, the dimensions of the group algebra components associated with this action are
$$
\dim B_{W_0}=\dim JE=1,\quad \dim B_{W_{1\otimes\omega}}=\dim P(h_a)=0,
$$
and
$$
M:=\dim B_{W_{\omega\otimes\omega^j}}=\dim P(h_{a^{-j}b})=\frac{m}{2}(p-1),\qquad 1\le j\le p.
$$
By Proposition \ref{prop:iso_decomposition}, for every $g\in X\setminus\{a\}$, the Prym variety $P(h_g)$ is isogenous to the component $B_{W_g}$. Hence $\dim P(h_g)=\frac{m}{2}(p-1)$. Set
$$
P_g:=k_g^*P(h_g)\subset J\widetilde C
$$
and denote by
$$
u_g:=k_g^*|_{P(h_g)}:P(h_g)\longrightarrow P_g
$$
the induced isogeny.

We now verify the hypotheses needed to apply Theorem \ref{lemma:PTlemma}. The ramification of the $G_p$-cover is generated by elements of the subgroup $\langle a\rangle$. Therefore $h_a:C_a\to E$ is an étale cyclic cover of degree $p$. On the other hand, if $g\in X\setminus\{a\}$, then $k_g:\widetilde C\to C_g$ is étale of degree $p$. Moreover, the image of $\langle a\rangle$ in $G_p/\langle g\rangle$ is non-trivial, so $h_g:C_g\to E$ is a ramified cyclic cover of degree $p$. Its ramification degree is at least $4$, because $m\ge2$ and $p$ is odd.

Let $\eta_g\in JC_g[p]$ be the point defining the étale cyclic cover $k_g:\widetilde C\to C_g$. Since $f:\widetilde C\to E$ factors through the \'etale cover $h_a:C_a\to E$ and also as $f=h_g\circ k_g$, the argument in the proof of Lemma \ref{lemma_type} gives
$$
\ker k_g^*\cap h_g^*JE\neq 0.
$$
Hence, as $\ker k_g^*=\langle\eta_g\rangle$, we have $\eta_g\in h_g^*JE[p]$. Therefore $\eta_g=h_g^*\xi$ for some $\xi\in JE[p]$. Since $h_g$ is ramified of degree $p$, $h_g^*:JE\to JC_g$ is injective and $\ker\Nm_{h_g}$ is connected. Moreover $\Nm_{h_g}(\eta_g)=\Nm_{h_g}(h_g^*\xi)=p\xi=0$, so $\eta_g\in P(h_g)$. In particular, $\ker u_g=\langle\eta_g \rangle$.

Let $\widetilde\Theta_{P_g}$ be the polarization induced by $\widetilde\Theta$ on $P_g$. Since $h_g:C_g\to E$ is a ramified cyclic cover of degree $p$ over an elliptic curve, the induced polarization on $P(h_g)$ has type $(1,\ldots,1,p)$, and hence we have that $K(\Theta'_{P(h_g)})\simeq(\mathbb Z/p)^2$. Moreover, $\eta_g\in h_g^*JE\cap P(h_g)=K(\Theta'_{P(h_g)})$ by Proposition \ref{prop:kernel_cover}, so $\langle\eta_g\rangle$ is maximal isotropic with respect to $e^{\Theta'_{P(h_g)}}$, since $\langle\eta\rangle$ has index $p$. Therefore $\Theta'_{P(h_g)}$ descends to a polarization $M_g$ on $P_g$. Since \cite[Proposition 3.2.1(a)]{LR2022} gives $u_g^*\widetilde\Theta_{P_g}=(\Theta'_{P(h_g)})^{\otimes p}$, we have $\widetilde\Theta_{P_g}=M_g^{\otimes p}$. Proposition \ref{prop:exponent_descended_polarization} gives $\exp K(\widetilde\Theta_{P_g})=p$.

Since $k_g$ is étale cyclic of degree $p$ defined by $\eta_g\in h_g^*JE[p]\subset JC_g[p]$,  $h_g$ is ramified of degree $p$ with ramification degree at least $4$ and the exponent of $P_g$ is $p$, Theorem \ref{lemma:PTlemma} applies. Hence $P_g=k_g^*P(h_g)$ is a Prym--Tyurin variety of exponent $p$.

Finally, the dimension of the family of curves with this action in the moduli space $\mathcal M_{pM+1}$ is determined by the number of branch points of the action and by the fact that the quotient curve $\widetilde{C}/G_p$ has genus $1$.
\end{proof}

\begin{corollary}
Let
$$\mu_1: f^*JC\times \prod_{g\in X} k_g^*P(h_g)\to J\widetilde C,\quad \mu_2: \prod_{g\in X} k_g^*P(h_g)\to P(f)$$
be the addition maps. Then 
$$\deg \mu_1=p^{\widetilde{g}}\quad \mbox{and}\quad \deg \mu_2=p^{\widetilde{g}-2}.$$
\end{corollary}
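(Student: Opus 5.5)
The plan is to apply Lemma \ref{lemma:kernel_mu} twice, once to $\mu_1$ and once to $\mu_2$, and read off $|\ker\mu_i|$ from the orders of the kernels of the induced polarizations. We work in the setting of Theorem \ref{Theo1}, so $C=E$ is elliptic and $\widetilde g=\frac m2p(p-1)+1$. Write $M=\frac m2(p-1)$ and $P_g=k_g^*P(h_g)$.

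First we record the relevant data.
\begin{enumerate}
\item Since $\dim P(h_a)=0$, the factor $g=a$ contributes nothing. The $p$ remaining factors $P_g$, $g\in X\setminus\{a\}$, are Prym--Tyurin of exponent $p$ and dimension $M$ by Theorem \ref{Theo1}, so $|K(\widetilde\Theta_{P_g})|=p^{2M}$.
\item For $f^*JE$: the map $f$ factors as $h_a\circ k_a$ with $h_a$ étale cyclic of degree $p$ and $k_a$ ramified, so $k_a^*$ is injective and $|\ker f^*|=p$. Proposition \ref{prop:kernel_cover} with $d=p^2$ gives
$$|K(\widetilde\Theta_{f^*JE})|=|P(f)\cap f^*JE|=|K(\widetilde\Theta_{P(f)})|=\frac{|JE[p^2]|}{p^2}=p^2.$$
In particular the type of $\widetilde\Theta_{f^*JE}$ is $(p)$.
\item By Proposition \ref{prop:iso_decomposition} and dimension count, both maps are isogenies.
\end{enumerate}

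Next we verify the orthogonality hypothesis of Lemma \ref{lemma:kernel_mu}. Each $P_g$ is $G_p$-stable and isogenous to $P(h_g)\sim B_{W_g}$, and $f^*JE$ is $B_{W_0}$. Hence these subvarieties are exactly the isotypical components of $J\widetilde C$ for pairwise distinct rational irreducible representations. Their norm endomorphisms are then nonzero rational multiples of pairwise orthogonal central idempotents of $\mathbb Q[G_p]$, so they are pairwise orthogonal. This identification of each $P_g$ with the full isotypical component $B_{W_g}$, rather than just something isogenous to it, is the step requiring the most care. Since both are connected abelian subvarieties of the same dimension and $P_g$ lies in $B_{W_g}$ (it is contained in the image of the corresponding idempotent, being a pullback of $P(h_g)$), they coincide.

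Now apply Lemma \ref{lemma:kernel_mu} to $\mu_1$, using that $\widetilde\Theta$ is principal:
$$p^2\cdot (p^{2M})^p=|\ker\mu_1|^2.$$
This gives $\deg\mu_1=p^{1+pM}=p^{\widetilde g}$, since $pM=\widetilde g-1$.

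For $\mu_2$, note that $P(f)$ is the complement of $f^*JE$. By Corollary \ref{cor:transitivity}, the polarization induced on each $P_g$ by $\widetilde\Theta_{P(f)}$ agrees with $\widetilde\Theta_{P_g}$. Applying Lemma \ref{lemma:kernel_mu} inside $(P(f),\widetilde\Theta_{P(f)})$ and using $|K(\widetilde\Theta_{P(f)})|=p^2$ from step (2) gives
$$p^{2pM}=|\ker\mu_2|^2\cdot p^2.$$
Therefore $\deg\mu_2=p^{pM-1}=p^{\widetilde g-2}$.
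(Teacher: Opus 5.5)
Your proof is correct and follows the paper's route. You apply Lemma \ref{lemma:kernel_mu} to $\mu_1$ in $(J\widetilde C,\widetilde\Theta)$ and to $\mu_2$ inside $(P(f),\widetilde\Theta_{P(f)})$, using $|K(\widetilde\Theta_{f^*JE})|=|K(\widetilde\Theta_{P(f)})|=p^2$ and $|K(\widetilde\Theta_{P_g})|=p^{2M}$. The differences are only in the supporting steps: you obtain the order $p^2$ directly from Proposition \ref{prop:kernel_cover} via $|\ker f^*|=p$, rather than from Lemma \ref{lemma_type} and \cite[Proposition 2.6.3]{LR2022}, and you spell out the orthogonality and isogeny hypotheses that the paper leaves implicit, namely that $P_g=B_{W_g}$.
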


\begin{proof}
The degree of $\mu_1$ is a direct consequence of applying Lemma \ref{lemma:kernel_mu}. The degree of $\mu_2$ can be calculated using Lemma \ref{lemma:kernel_mu} and the fact that the type of the induced polarization on $P(f)$ is $(1,\dots,1,p)$, since it is a consequence of Lemma \ref{lemma_type}, Theorem \ref{Theo1} and  \cite[Proposition 2.6.3]{LR2022}.
\end{proof}

Theorem \ref{Theo1} admits a completely \'etale analog over a base curve of genus $2$, provided the covering is isotropic.

\begin{theorem}\label{Theo1_genus2}
Let $p$ be an odd prime. There exists a $3$--dimensional family of curves $\widetilde C$ endowed with an \emph{isotropic} \'etale action of $G_p\simeq \mathbb Z_p\times \mathbb Z_p$ such that, for every $g\in X$, the abelian subvariety
$$
P_g:=k_g^*P(h_g)\subset J\widetilde C
$$
is a Prym--Tyurin variety of exponent $p$ and dimension $p-1$.
\end{theorem}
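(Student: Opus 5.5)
Existence comes first, then we reduce to Theorem \ref{lemma:PTlemmagenus2}. An \'etale $G_p$--cover $f:\widetilde C\to C$ with $g(C)=2$ is the same thing as a subgroup $K\subset JC[p]$ with $K\simeq(\mathbb Z/p)^2$, namely $K=\ker f^*$. By definition, $f$ is isotropic exactly when $K$ is isotropic for the Weil pairing $e^{p\Theta_C}$. Equivalently, in the language of Theorem \ref{theo:Riemann:existence}, we need a generating vector of type $(2;-)$, i.e. $(a_1,a_2,b_1,b_2)$ generating $G_p$, where the commutator relation holds automatically because $G_p$ is abelian. The Hurwitz formula gives $g(\widetilde C)=p^2+1$. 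Isotropic subgroups exist: take $K=\langle e_1,e_2\rangle$ for a symplectic basis $e_1,f_1,e_2,f_2$ of $JC[p]$.

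Next I will count dimensions. $\mathcal M_2$ has dimension $3$, and for each $C$ there are only finitely many isotropic subgroups $K\subset JC[p]$. Hence the isotropic \'etale $G_p$--covers form a finite cover of (an open subset of) $\mathcal M_2$. Their images in $\mathcal M_{p^2+1}$ therefore form a $3$--dimensional family, at least after restricting to one component corresponding to a fixed monodromy class.

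Fix $g\in X$. The subgroup $\langle g\rangle$ corresponds to a factorization $f=h_g\circ k_g$ with $k_g:\widetilde C\to C_g$ and $h_g:C_g\to C$. Both maps are \'etale of degree $p$, because $f$ is \'etale and $G_p$ is abelian. Each is cyclic, since it is Galois with group $\mathbb Z_p$. In terms of kernels, $\ker h_g^*$ is the order-$p$ subgroup $K_g\subset K$ that is the annihilator of $\langle g\rangle$ under the duality $K\simeq \widehat{G_p}$. The cover $k_g$ is then defined by $\eta_g=h_g^*\xi$ for any $\xi\in K\setminus K_g$.

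Isotropy of $f$ is a property of $K$ alone and does not depend on the factorization. So the hypotheses of Theorem \ref{lemma:PTlemmagenus2} hold for every $g\in X$, and $P_g=k_g^*P(h_g)$ is a Prym--Tyurin variety of exponent $p$. Its dimension is $\dim P(h_g)=g(C_g)-2=(p+1)-2=p-1$, because $k_g^*|_{P(h_g)}$ is an isogeny onto its image.

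The main point to check is that Theorem \ref{lemma:PTlemmagenus2} really applies uniformly to all $p+1$ subgroups. Its proof uses \cite[Lemma 2.10]{BoSh25}, which gives $\eta_g\in P(h_g)$, and that statement should be read as a condition on the isotropic subgroup $K$ rather than on a particular choice of factorization. I would verify this directly. We have $\Nm_{h_g}(\eta_g)=p\xi=0$, so $\eta_g$ lies in $\ker\Nm_{h_g}$. Since $h_g$ is \'etale, $\ker\Nm_{h_g}$ has two components, and the component containing $\eta_g$ is detected by the Weil pairing of $\xi$ with the generator of $K_g$. Isotropy of $K$ makes this pairing trivial, so $\eta_g$ lies in the identity component $P(h_g)$ for every $g$.
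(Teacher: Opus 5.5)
Your proposal is correct and follows the same route as the paper: you apply Theorem \ref{lemma:PTlemmagenus2} to each of the $p+1$ factorizations $f=h_g\circ k_g$, and you get dimension $3$ from the finiteness of isotropic \'etale $G_p$--covers over a fixed genus-$2$ curve; your direct check that $\eta_g\in P(h_g)$ makes explicit what the paper delegates to \cite[Lemma 2.10]{BoSh25}. One slip in that check: for an \'etale cyclic cover of degree $p$, $\ker\Nm_{h_g}$ has $p$ components, not two, since its component group is dual to $\ker h_g^*\simeq\mathbb Z_p$. The criterion you actually use is still valid, and it is exactly Proposition \ref{prop:kernel_cover}, which gives $h_g^*JC\cap P(h_g)=h_g^*\bigl(K_g^{\perp_{p\Theta_C}}\bigr)$; so isotropy of $K$ does put $\eta_g=h_g^*\xi$ in $P(h_g)$ for every $g$.
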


\begin{proof}
Since all coverings involved are \'etale and $g(C)=2$, the hypotheses of Theorem \ref{lemma:PTlemmagenus2} are satisfied for every subgroup $\langle g\rangle\subset G_p$, and therefore each $P_g=k_g^*P(h_g)$ is a Prym--Tyurin variety of exponent $p$ of dimension $p-1$.

Finally, the only moduli parameter in this construction is the base curve $C$ of genus $2$, whose moduli space has dimension $3$, and the choice of an \'etale isotropic $G_p$--cover over a fixed $C$ is discrete. Hence the family has dimension $3$.
\end{proof}

As in the elliptic case, the degrees of the addition maps can be computed directly from Proposition \ref{prop:iso_decomposition}, the isotypical decomposition of $J\widetilde C$, and Lemma \ref{lemma:kernel_mu}, together with the description of the induced polarizations in the isotropic case.

\begin{corollary}
In the situation of Theorem \ref{Theo1_genus2}, let
$$
\mu_1: f^*JC\times \prod_{g\in X} k_g^*P(h_g)\longrightarrow J\widetilde C,
\quad
\mu_2: \prod_{g\in X} k_g^*P(h_g)\longrightarrow P(f)
$$
be the addition maps. Then
$$
\deg \mu_1 = p^{\,g(\widetilde C)}
\quad\text{and}\quad
\deg \mu_2 = p^{\,g(\widetilde C)-4}.
$$
\end{corollary}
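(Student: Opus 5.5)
The plan is to deduce both degrees from the counting identity of Lemma \ref{lemma:kernel_mu}, exactly as in the elliptic case. Only the orders of the kernels of the induced polarizations are needed, not their full types. Throughout, $g(C)=2$ and $g(\widetilde C)=p^2+1$ (by \eqref{eq:RH_formula}), and $|X|=p+1$. By Theorem \ref{Theo1_genus2}, each $P_g$ has dimension $p-1$ and polarization of type $(p,\dots,p)$, so
$$|K(\widetilde\Theta_{P_g})|=p^{2(p-1)}.$$

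\emph{Step 1: hypotheses of Lemma \ref{lemma:kernel_mu}.} The subvarieties $f^*JC$ and $P_g$ ($g\in X$) should correspond, via Proposition \ref{prop:iso_decomposition} and \cite[Corollary 3.5.10]{LR2022}, to the group algebra components $B_{W_0}$ and $B_{W_g}$ attached to pairwise distinct rational irreducible representations. Concretely, $P_g=k_g^*P(h_g)$ is the image of $P(h_g)\sim A_{W_g}$ inside $J\widetilde C$, and $f^*JC$ corresponds to $W_0$. Their norm endomorphisms are then given by orthogonal central idempotents of $\mathbb Q[G_p]$, so they are pairwise orthogonal. The dimension count $2+(p+1)(p-1)=p^2+1=g(\widetilde C)$ shows that $\mu_1$ is an isogeny. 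I expect this identification of the $P_g$ with isotypical components, and hence orthogonality, to be the only non-formal point; I would justify it via the idempotent description of the norm endomorphisms in \cite{LR2022}.

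\emph{Step 2: degree of $\mu_1$.} By Proposition \ref{prop:kernel_cover} with $d=p^2$ and $|\ker f^*|=p^2$, we get $|K(\widetilde\Theta_{f^*JC})|=p^{8}/p^{4}=p^4$; this is consistent with the type $(p,p)$ noted in the proof of Theorem \ref{lemma:PTlemmagenus2}. Since $\widetilde\Theta$ is principal, Lemma \ref{lemma:kernel_mu} gives
$$|\ker\mu_1|^2=p^4\cdot p^{2(p-1)(p+1)}=p^{2p^2+2}=p^{2g(\widetilde C)},$$
hence $\deg\mu_1=p^{g(\widetilde C)}$.

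\emph{Step 3: degree of $\mu_2$.} $P(f)$ is the complement of $f^*JC$, so by Corollary \ref{cor:transitivity} the polarization induced on each $P_g$ by $\widetilde\Theta_{P(f)}$ coincides with $\widetilde\Theta_{P_g}$. Moreover, $\mu_2$ is an isogeny because $\dim P(f)=p^2-1$. The norm endomorphisms of the $P_g$ inside $P(f)$ remain pairwise orthogonal, so Lemma \ref{lemma:kernel_mu} applies to $\mu_2$ with target $(P(f),\widetilde\Theta_{P(f)})$. By Lemma \ref{lem:type_p2}(1) with $g=2$, in the isotropic case this polarization has type $(1,\dots,1,p,p)$, so $|K(\widetilde\Theta_{P(f)})|=p^4$. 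Hence
$$p^{2(p^2-1)}=|\ker\mu_2|^2\cdot p^4,$$
so $|\ker\mu_2|=p^{p^2-3}=p^{g(\widetilde C)-4}$, which is the claim.
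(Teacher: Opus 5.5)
Your proposal is correct and follows the route the paper indicates. You get orthogonality and the isogeny property from the isotypical decomposition (Proposition \ref{prop:iso_decomposition}), then apply Lemma \ref{lemma:kernel_mu} with $|K(\widetilde\Theta_{f^*JC})|=p^4$, $|K(\widetilde\Theta_{P_g})|=p^{2(p-1)}$ for each of the $p+1$ factors, and $|K(\widetilde\Theta_{P(f)})|=p^4$ from Lemma \ref{lem:type_p2}, and your arithmetic $|\ker\mu_1|=p^{p^2+1}$, $|\ker\mu_2|=p^{p^2-3}$ matches the stated degrees.
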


\section{Maps associated with the Prym--Tyurin construction}\label{sec5}

\subsection{The Prym--Tyurin map}\label{section5}

Fix an odd prime number $p$. Theorem \ref{lemma:PTlemma} implies that, given a cyclic covering $h:C\to E$ of degree $p$ of an elliptic curve and a non-trivial $p$--torsion point $\xi\in E[p]$, with $\eta=h^*(\xi)\in h^*JE\cap P(h)\subset JC[p]$ determining an \'etale covering $k:\widetilde C\to C$ of degree $p$, the associated abelian subvariety
$$
P:=k^*P(h)\subset J\widetilde C
$$
is then a Prym--Tyurin variety of exponent $p$.

Fix a generator $a$ of the Galois group of $h$. The cyclic covering is described by a generating vector
$$
v=(a^r,a^s;a^{c_1},\dots,a^{c_m}),
\qquad r,s\in\{1,\dots,p\},
$$
where $c_j\in\{1,\dots,p-1\}$ and $\sum_jc_j\equiv0\pmod p$. Recall from subsection \ref{section_coverings} that the corresponding algebraic data are given by a triple $(E,B,L)$, where
$$
B=\sum_{j=1}^m c_jx_j
$$
is an effective divisor and $L\in\Pic(E)$ satisfies
$$
L^{\otimes p}\simeq \mathcal O_E(B).
$$
The local monodromies $a^{c_1},\dots,a^{c_m}$ determine the weights of $B$, while the full generating vector determines the choice of $L$ with $L^{\otimes p}\simeq\mathcal O_E(B)$. The reduced branch divisor is $D=x_1+\cdots+x_m$.

Fix one such vector $v$, and let $\mathcal R(p,m)$ be the corresponding moduli space of triples $(E,B,L)$. Notice that the choice of $v$ is discrete; allowing it to vary therefore gives a family of such moduli spaces and of the maps defined below. Let
$$
\mathcal R(p,m)^{\xi}:=\{(E,B,L,\xi)\mid (E,B,L)\in\mathcal R(p,m),\ \xi\in E[p]\setminus \{0\}\}
$$
be the finite covering obtained by marking a $p$--torsion point. The cyclic cover determined by $(E,B,L)$ has a Prym variety $P(h)$ of dimension $\frac{m}{2}(p-1)$, and the Prym--Tyurin variety $P=k^*P(h)$ constructed from $(E,B,L,\xi)$ has the same dimension. We thus obtain two natural morphisms: the Prym map
$$
\mathcal P_{p,m}:\mathcal R(p,m)\longrightarrow \mathcal A_{\frac{m}{2}(p-1)}^{\delta},
\quad
(E,B,L)\longmapsto P(h),
$$
where $\delta=(1,\dots,1,p)$, and the Prym--Tyurin map
$$
\mathcal{PT}_{p,m}:\mathcal R(p,m)^{\xi}\longrightarrow \mathcal A_{\frac{m}{2}(p-1)},
\quad
(E,B,L,\xi)\longmapsto (P,\Xi).
$$
Since $\mathcal R(p,m)^{\xi}\to \mathcal R(p,m)$ is finite, the map $\mathcal{PT}_{p,m}$ is generically finite onto its image if and only if $\mathcal P_{p,m}$ is generically finite onto its image. In what follows we compute the codifferential of $\mathcal P_{p,m}$ at a general point and prove its surjectivity, following \cite{LO2011}.

The projection $\mathcal R(p,m)\to\mathcal M_{1,m}$ is finite, because once $(E,x_1,\dots,x_m)$ is fixed, there are only finitely many choices of $L$ with $L^{\otimes p}\simeq\mathcal O_E(B)$. In particular, at a general point $(E,B,L)$ the tangent space of $\mathcal R(p,m)$ identifies with the tangent space of the pointed elliptic curve $(E,x_1,\dots,x_m)$, namely
$$
T_{(E,B,L)}\mathcal R(p,m)\simeq H^1(E,T_E(-D)).
$$
By Serre duality $T_{(E,B,L)}^\vee\mathcal R(p,m)\simeq H^0(E,\omega_E\otimes T_E^\vee(D))$. Since $E$ is elliptic, both $\omega_E$ and $T_E$ are trivial, and therefore
$$
T_{(E,B,L)}^\vee\mathcal R(p,m)\simeq H^0(E,\mathcal O_E(D)).
$$

To compute the codifferential, we describe the $\mathbb Z_p$--eigenspace decomposition of $H^0(C,\omega_C)$ via the canonical decomposition \eqref{canonical}. Applying \eqref{canonical} and using $\omega_E\simeq\mathcal O_E$, we obtain
$$
h_*\omega_C=\mathcal O_E\oplus\bigoplus_{i=1}^{p-1}L^{i}\!\left(-\left\lfloor\frac{i}{p}B\right\rfloor\right).
$$
The first summand corresponds to the invariant differentials, namely pullbacks of differentials from $E$. Thus the part of $H^0(C,\omega_C)$ transforming by non--trivial characters is
$$
H^0(C,\omega_C)^-=\bigoplus_{i=1}^{p-1}H^0\!\left(E,L^{i}\!\left(-\left\lfloor\frac{i}{p}B\right\rfloor\right)
\right).
$$
Setting
$$
M_i:=L^{i}\!\left(-\left\lfloor\frac{i}{p}B\right\rfloor\right),
$$
we can write
$$
H^0(C,\omega_C)^-
=
\bigoplus_{i=1}^{p-1} H^0(E,M_i),
\quad\text{and hence}\quad
T_{P(h)}(0)\simeq \left(\bigoplus_{i=1}^{p-1} H^0(E,M_i)\right)^\vee.
$$

The differential of the Prym map $\mathcal P_{p,m}$ is induced by cup product. Dualizing and using Serre duality on $E$, the codifferential identifies with the natural map
$$
d\mathcal P_{p,m}(E,B,L)^\vee:
\Sym^2 H^0(C,\omega_C)^-
\longrightarrow
H^0(E,\mathcal O_E(D)).
$$
Using the eigenspace decomposition above, the symmetric square decomposes into blocks
$$
H^0(E,M_i)\otimes H^0(E,M_j).
$$
Since $\mathcal R(p,m)$ parametrizes deformations preserving the cyclic covering structure, the codifferential factors through the $\mathbb Z_p$--invariant part of the multiplication map. This invariant part is obtained by pairing complementary characters, hence only the blocks with $(i,j)=(i,p-i)$ contribute. Setting
$$
N_i:=M_{p-i}=L^{p-i}\!\left(-\left\lfloor\frac{p-i}{p}B\right\rfloor\right),
$$
the codifferential is the direct sum of the multiplication maps
$$
\mu_i:\;
H^0(E,M_i)\otimes H^0(E,N_i)
\longrightarrow
H^0(E,M_i\otimes N_i),
\quad 1\le i\le \frac{p-1}{2}.
$$
For every $j$, neither $ic_j/p$ nor $(p-i)c_j/p$ is an integer, and their sum is $c_j$. Hence
$$
\left\lfloor\frac{ic_j}{p}\right\rfloor
+\left\lfloor\frac{(p-i)c_j}{p}\right\rfloor=c_j-1.
$$
Since $B=\sum_{j=1}^m c_jx_j$, $D=\sum_{j=1}^m x_j$, and $L^p\simeq\mathcal O_E(B)$, it follows that
$$
M_i\otimes N_i
\simeq L^p\left(-\sum_{j=1}^m(c_j-1)x_j\right) \simeq L^p(-B+D)
\simeq\mathcal O_E(D).
$$
Taking degrees gives
\begin{equation}\label{eq:complementary_degrees}
\deg M_i+\deg N_i=m.
\end{equation}
Moreover, the definition of $M_i$, together with $p\deg L=\sum_jc_j$ and $p\nmid ic_j$, shows that $0<\deg M_i<m$. In particular, the target of every multiplication map is $H^0(E,\mathcal O_E(D))$.

We record the following elementary combinatorial fact, whose separate cases will be used in the proof of the finiteness theorem.

\begin{lemma}\label{lem:combinatorial_degrees}
Let $p$ be an odd prime and let $c_1,\dots,c_m\in\{1,\dots,p-1\}$ satisfy $\sum_jc_j\equiv0\pmod p$. Then the following assertions hold.
\begin{enumerate}
\item[(a)] If $m\ge5$, there exists $i$ such that both $M_i$ and $N_i$ have degree at least $2$, and one of them has degree at least $3$.
\item[(b)] If $m=4$, there exists $i$ such that $\deg M_i=\deg N_i=2$.
\item[(c)] If $m=2$, then $\deg M_i=\deg N_i=1$ for every $i$, while if $m=3$, then $\{\deg M_i,\deg N_i\}=\{1,2\}$ for every $i$.
\end{enumerate}
\end{lemma}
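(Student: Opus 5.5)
The plan rests on one explicit formula for $\deg M_i$; with it, parts (c) and (a) are short, and part (b) is where the real work is. Since $p\deg L=\sum_j c_j$,
$$
\deg M_i=\sum_{j=1}^m\Bigl(\frac{ic_j}{p}-\Bigl\lfloor\frac{ic_j}{p}\Bigr\rfloor\Bigr)=\sum_{j=1}^m\Bigl\{\frac{ic_j}{p}\Bigr\}=\frac1p\sum_{j=1}^m r_j(i),
$$
where $r_j(i)\in\{1,\dots,p-1\}$ is the residue of $ic_j$ modulo $p$. By \eqref{eq:complementary_degrees}, $\deg N_i=m-\deg M_i$. Everything then becomes a statement about the integer $S(i):=\sum_j r_j(i)\in\{p,2p,\dots,(m-1)p\}$. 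Replacing $i$ by $-i$ replaces each $r_j$ by $p-r_j$, which swaps $M_i$ and $N_i$.

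\textbf{Part (c).} This is immediate from $0<\deg M_i<m$, already observed before the lemma. For $m=2$ it forces $\deg M_i=1=\deg N_i$. For $m=3$ it forces $\deg M_i\in\{1,2\}$, and then $\deg N_i=3-\deg M_i$.

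\textbf{Part (b).} Here $m=4$, and we need an $i$ with $S(i)=2p$. I would split the four indices into two pairs, say $\{1,2\}$ and $\{3,4\}$. Say the pair $\{1,2\}$ \emph{wraps} at $i$ if $r_1(i)+r_2(i)>p$. Since $r_1+r_2\equiv-(r_3+r_4)\pmod p$, we get $S(i)=2p$ exactly when one of the two pairs wraps and the other does not; the case $r_1+r_2=p$, which forces $r_3+r_4=p$ as well, also gives $S(i)=2p$.
\begin{itemize}
\item If $c_1+c_2\equiv0\pmod p$, then $r_1(i)+r_2(i)=p$ for every $i$, hence $r_3+r_4=p$ too, and every $i$ works.
\item Otherwise the symmetry $i\leftrightarrow -i$ shows that the pair wraps for exactly $(p-1)/2$ values of $i$, and likewise for $\{3,4\}$.
\end{itemize}
The failure case is therefore the one where both pairs wrap on the same set of $i$, for all three ways of splitting $\{1,2,3,4\}$ into two pairs. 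I would derive a contradiction by fixing $i$ with $r_1(i)=1$, which is possible after rescaling so that $c_1=1$. Then the pair $\{1,2\}$ does not wrap at $i$, so $\{3,4\}$ does not wrap either, i.e. $r_3+r_4<p$, and the same holds for the other two splittings. Comparing the resulting inequalities among $r_2,r_3,r_4$ should force $S(i)=p$ together with a condition that fails at $i+1$ or at $2i$. This is the step I expect to be the main obstacle. If the direct inequalities do not close, I would fall back on a second-moment computation: use $\sum_i\{ic/p\}\{ic'/p\}$, which is explicit via Dedekind sums, to show that $\sum_i (S(i)-2p)^2<(p-1)p^2$, so that $S(i)\in\{p,3p\}$ cannot hold for every $i$.

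\textbf{Part (a).} Here $m\ge5$. It suffices to find $i$ with $2\le\deg M_i\le m-2$. Then both $\deg M_i$ and $\deg N_i=m-\deg M_i$ are at least $2$, and since their sum is $m\ge5$, one of them is at least $3$. I would reduce to the four-point case.
\begin{itemize}
\item \emph{Grouping.} Choose a subset $J$ of the indices with $\sum_{j\in J}c_j\not\equiv0$ whose complement has at least three elements. Merging the indices in $J$ to the single weight $\sum_{j\in J}c_j$ (taken mod $p$) lowers $S(i)/p$ by an integer between $0$ and $|J|-1$, so it changes $\deg M_i$ in a controlled way.
\item \emph{Induction.} Alternatively, induct on $m$ using that $\deg M_i$ changes by at most $1$ when a single index is removed and its weight is absorbed into a neighbour.
\end{itemize}
The simplest route is probably a direct counting argument: show that $\deg M_i\in\{1,m-1\}$ can hold for at most a few $i$, fewer than $p-1$. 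Indeed, $\deg M_i=1$ forces all $r_j(i)$ to be small, with $\sum_j r_j(i)=p$, and this can happen for only a few $i$. By the symmetry $i\leftrightarrow -i$ the same bound holds for $\deg M_i=m-1$. Comparing with the average value $\frac{m}{2}$ of $\deg M_i$ over $i$ then gives an $i$ of intermediate degree.
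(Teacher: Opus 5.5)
\textbf{There is a genuine gap: parts (a) and (b) are not proved.} Your formula $\deg M_i=\sum_j\{ic_j/p\}$ is correct, and so is the symmetry $i\leftrightarrow -i$. Part (c) is also fine and is argued exactly as in the paper.

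For (b), you leave the decisive step open yourself, and the local check you propose cannot work. After normalizing $c_1=1$, your inequalities do give $S(1)=p$, i.e.\ $\deg M_1=1$. But the degree can stay equal to $1$ for a long run of indices. Take $c=(1,p-3,1,1)$ with $p\ge 7$. No pair sums to $0 \bmod p$, and all three splittings fail at $i=1$. Yet $\deg M_k=3k/p+\{-3k/p\}=1$ for every $k<p/3$, so the degree is still $1$ at $i+1$ and at $2i$. No local test near the normalized index can close the argument. The Dedekind-sum fallback would need an explicit upper bound on $\sum_{j\neq k}s(c_jc_k^{-1},p)$, which you do not supply.

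For (a), the claim that $\deg M_i=1$ holds for only few $i$ is asserted, not shown. Comparing with the average $m/2$ adds nothing, because under $i\leftrightarrow -i$ the values $1$ and $m-1$ already average to $m/2$. The induction route can be made to work, but only with (b) as its base case. It also needs the one-sided fact that merging two weights with non-zero sum lowers $\deg M_i$ by $0$ or $1$. A two-sided ``changes by at most $1$'' would only give $1\le\deg M_i\le m-2$.

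\textbf{The missing ingredient is a step bound, which follows from your own formula.} Since $\{x+c/p\}\le\{x\}+c/p$, one gets $\deg M_{i+1}-\deg M_i\le\sum_jc_j/p=\deg L=\deg M_1$. Then argue by cases on $\deg M_1$:
\begin{itemize}
\item If $2\le\deg M_1\le m-2$, take $i=1$.
\item If $\deg M_1=m-1$, replace each $c_j$ by $p-c_j$ (the inverse generator); this reduces to the case $\deg M_1=1$.
\item If $\deg M_1=1$, the sequence $\deg M_1=1,\dots,\deg M_{p-1}=m-1\ge 3$ rises by at most one at each step. So it takes the value $2$ at some $i$, and there $\deg N_i=m-2$.
\end{itemize}
This proves (a) and (b) at once, with no analysis of pairings, and it is essentially the paper's argument.
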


\begin{proof}
Assume first that $m\ge4$. Since $1\le c_j\le p-1$, we have $\lfloor B/p\rfloor=0$, hence $M_1=L$. Moreover, $p\deg L=\sum_jc_j$ implies that $1\le\deg L\le m-1$. We distinguish three cases.

\begin{itemize}
\item If $2\le\deg L\le m-2$, the result follows immediately from $M_1=L$ and \eqref{eq:complementary_degrees}.

\item Suppose that $\deg L=1$. Then
$$
\deg M_i=i-\sum_{j=1}^m\left\lfloor\frac{ic_j}{p}\right\rfloor.
$$
Since each term $\lfloor ic_j/p\rfloor$ is non-decreasing in $i$, we have $\deg M_{i+1}\le\deg M_i+1$. The sequence $\deg M_1,\dots,\deg M_{(p-1)/2}$ starts at $1$. Its first change cannot be a decrease, because all degrees are positive, and it cannot remain constant: otherwise \eqref{eq:complementary_degrees} would give $\deg M_{(p+1)/2}=m-1$, a jump greater than $1$. Hence the first change is from $1$ to $2$, and the complementary bundle has degree $m-2$ by \eqref{eq:complementary_degrees}.

\item If $\deg L=m-1$, the complementary line bundle $M_{p-1}$ has degree $1$ by \eqref{eq:complementary_degrees}. Replacing the generator of the cyclic group by its inverse exchanges the indices $i$ and $p-i$, so this case reduces to the preceding one.
\end{itemize}

For $m=4$ this proves (b), while for $m\ge5$ it proves (a). Finally, if $m=2$, positivity and \eqref{eq:complementary_degrees} give $\deg M_i=\deg N_i=1$ for every $i$. If $m=3$, the same argument gives $\{\deg M_i,\deg N_i\}=\{1,2\}$ for every $i$. This proves (c).
\end{proof}

\begin{theorem}\label{thm:surj_codiff_Prym}
Let $p$ be an odd prime and let $h:C\to E$ be a cyclic cover of degree $p$ of an elliptic curve, totally ramified at $m$ points, with local monodromies $a^{c_1},\dots,a^{c_m}$ satisfying $\sum_jc_j\equiv0\pmod p$. Assume that either $p\ge 5$ and $m\ge 2$ or $p=3$ and $m\ge 4$. Then the codifferential of the Prym map $\mathcal P_{p,m}$ is surjective at a general such cover $h$.
\end{theorem}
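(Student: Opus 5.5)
The codifferential $d\mathcal P_{p,m}(E,B,L)^\vee$ was identified above with the sum $\bigoplus_{1\le i\le (p-1)/2}\mu_i$ of the multiplication maps $\mu_i:H^0(E,M_i)\otimes H^0(E,N_i)\to H^0(E,\mathcal O_E(D))$. The target has dimension $m$. So I would show that the images of the $\mu_i$ together span $H^0(E,\mathcal O_E(D))$ at a general point, splitting into cases according to Lemma \ref{lem:combinatorial_degrees}. The basic tool is the classical statement for an elliptic curve: if $A,B$ are line bundles with $\deg A\ge2$ and $\deg B\ge3$, then $H^0(A)\otimes H^0(B)\to H^0(A\otimes B)$ is surjective. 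I would prove this via the base-point-free pencil trick. Take a pencil $V\subset H^0(A)$ that is base point free, possible since $\deg A\ge 2$. The kernel of $V\otimes H^0(B)\to H^0(A\otimes B)$ is $H^0(B\otimes A^{-1})$. A dimension count then gives an image of dimension $2\deg B-(\deg B-\deg A)=\deg A+\deg B$ when $\deg B>\deg A$, and of dimension $2\deg B$ when $\deg A=\deg B$ and $A\not\simeq B$.

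Case $m\ge5$. By Lemma \ref{lem:combinatorial_degrees}(a) there is an $i$ with $\{\deg M_i,\deg N_i\}$ having both entries $\ge2$ and one $\ge3$. Then $\mu_i$ alone is already surjective by the statement above, and this holds for every $p$.

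Case $m=4$. By Lemma \ref{lem:combinatorial_degrees}(b) some $\mu_i$ is a map between two degree-$2$ bundles. By the pencil trick it is surjective (indeed bijective, $4\to4$) as soon as $M_i\not\simeq N_i$. Since $M_i\otimes N_i\simeq\mathcal O_E(D)$, this amounts to $M_i^{\otimes2}\not\simeq\mathcal O_E(D)$. This is a closed condition on $\mathcal R(p,m)$, so I only need one point where it fails. Here $M_i$ is $L^i$ twisted by a fixed combination of the $x_j$, and $L^{\otimes p}\simeq\mathcal O_E(\sum c_jx_j)$. Moving one branch point $x_j$ moves $M_i^{\otimes 2}\otimes\mathcal O_E(-D)$ in $\Pic^0(E)$ by a nonzero multiple of $x_j$ (up to $p$-torsion ambiguity). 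Hence the condition is not identically satisfied, and it fails on a dense open set. This covers $p=3$, where only $i=1$ is available.

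Cases $m=2,3$, which need $p\ge5$. Here I would exploit that at least two indices $i=1,2$ are available. For $m=2$, every $M_i,N_i$ has degree $1$, so $\operatorname{im}\mu_i$ is the line spanned by the section vanishing on $y_i+y_i'$, where $\mathcal O_E(y_i)=M_i$ and $\mathcal O_E(y_i')=N_i$. Two indices with $M_1\not\simeq M_2$ and $M_1\not\simeq N_2$ give non-proportional sections, hence span the $2$-dimensional space. For $m=3$, one bundle has degree $1$ with a unique section vanishing at a point $y_i$, so $\operatorname{im}\mu_i$ is the $2$-dimensional subspace $H^0(\mathcal O_E(D-y_i))$. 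Two indices with $y_1\ne y_2$ give two distinct hyperplanes in a $3$-dimensional space, which together span it. Again the needed non-isomorphisms are open conditions.

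The main obstacle is these genericity statements: $M_i\not\simeq N_i$ for $m=4$, and the distinctness of the degree-one bundles $M_1,M_2$ (and their complements) for $m=2,3$. I would check them by writing each $M_i$ in $\Pic(E)$ as $L^{i}(-\sum_j\lfloor ic_j/p\rfloor x_j)$ and substituting $L^{p}\simeq\mathcal O_E(B)$. Each coincidence then becomes a relation $\sum_j n_jx_j\equiv$ (torsion class) with integer coefficients $n_j$ not all zero, since $p\nmid ic_j$. Such a relation cannot hold for general $(x_1,\dots,x_m)$ on $E$.
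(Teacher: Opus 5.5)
Your proposal is correct and is essentially the paper's proof. It uses the same case split via Lemma~\ref{lem:combinatorial_degrees}: a single surjective $\mu_i$ for $m\ge5$ (you use the base-point-free pencil trick where the paper cites Mumford's criterion), the pencil trick plus $M_i\not\simeq N_i$ for $m=4$, two distinct hyperplanes $H^0(E,\mathcal O_E(D-y))$ for $m=3$, and two non-proportional lines for $m=2$; your uniform genericity argument (taking $p$-th powers to get a non-trivial integer relation among the $x_j$) supplies the general-position claims that the paper only asserts for $m=3,4$ and checks explicitly for $m=2$.

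One small loose end: your dimension count does not cover $\deg M_i=\deg N_i\ge3$ with $M_i\simeq N_i$, which can occur for even $m\ge6$. That locus is excluded generically by your same relation argument (or handled by projective normality), and the paper's appeal to Mumford's criterion tacitly needs the same non-isomorphism.
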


\begin{proof}
As explained above, the codifferential of the Prym map is the direct sum of the multiplication maps
$$
\mu_i:\;
H^0(E,M_i)\otimes H^0(E,N_i)
\longrightarrow
H^0(E,M_i\otimes N_i)\simeq H^0(E,\mathcal O_E(D)),
\quad 1\le i\le \frac{p-1}{2}.
$$
Since the target space does not depend on $i$, surjectivity amounts to proving that the subspace generated by the images $\ima(\mu_i)$ coincides with $H^0(E,\mathcal O_E(D))$.

Assume first that $m\ge 5$. By Lemma \ref{lem:combinatorial_degrees}, there exists an index $i$ such that one of $M_i,N_i$ has degree at least $3$ and the other has degree at least $2$. Since $E$ is elliptic, every line bundle of positive degree has vanishing $H^1$, and any line bundle of degree at least $2$ is base--point--free. Hence the multiplication map
$$
H^0(E,M_i)\otimes H^0(E,N_i)\longrightarrow H^0(E,M_i\otimes N_i)
$$
is surjective by Mumford's cohomological criterion for multiplication of sections \cite[Theorem 4]{Mumford}. Therefore the codifferential is surjective for $m\ge 5$.

If $m=4$, Lemma \ref{lem:combinatorial_degrees} gives an index $i$ such that $\deg(M_i)=\deg(N_i)=2$. Line bundles of degree $2$ on an elliptic curve are base--point--free pencils, and the base--point--free pencil trick yields the exact sequence
$$
0\to H^0(E,N_i\otimes M_i^{-1})
\to H^0(E,M_i)\otimes H^0(E,N_i)
\longrightarrow H^0(E,M_i\otimes N_i)
\to H^1(E,N_i\otimes M_i^{-1})
\to 0.
$$
For a general cover the degree--zero bundle $N_i\otimes M_i^{-1}$ is non--trivial, hence both outer terms vanish and $\mu_i$ is an isomorphism. Thus the codifferential is generically surjective when $m=4$.

Assume now that $m=3$ and $p\ge5$. By Lemma \ref{lem:combinatorial_degrees}, each complementary pair contains a degree--one line bundle. Since there are at least two complementary pairs, for a general cover we may choose two of these degree--one line bundles which are non--isomorphic, say $M\simeq\mathcal O_E(y)$ and $M'\simeq\mathcal O_E(y')$ with $y\neq y'$. Indeed, the equality of two such line bundles imposes a non--trivial relation in $\Pic^0(E)$ and hence defines a proper closed locus in the space of branch data.

Since a degree--one line bundle has a unique section up to scalar, multiplication by this section identifies the corresponding image with
$$
H^0(E,\mathcal O_E(D-y))
\subset H^0(E,\mathcal O_E(D)).
$$
The other chosen line bundle gives $H^0(E,\mathcal O_E(D-y'))$. Since $\deg(D)=3$, the line bundle $\mathcal O_E(D)$ is very ample and therefore separates the points $y$ and $y'$. Hence
$$
H^0(E,\mathcal O_E(D-y))
\neq
H^0(E,\mathcal O_E(D-y')).
$$
These are distinct hyperplanes in the three-dimensional space $H^0(E,\mathcal O_E(D))$, and consequently
$$
H^0(E,\mathcal O_E(D-y))+H^0(E,\mathcal O_E(D-y'))
=
H^0(E,\mathcal O_E(D)).
$$
Thus the codifferential is generically surjective for $m=3$ whenever $p\ge5$.

Finally, assume that $m=2$ and $p\ge5$. Up to changing the generator of the cyclic group, we may write
$$
B=x_1+(p-1)x_2.
$$
Then
$$
M_1=L,\qquad M_2=L^2(-x_2),
$$
and these line bundles belong to distinct complementary pairs. Suppose that the image lines of $\mu_1$ and $\mu_2$ coincide. Since all the factors have degree one, equality of the divisors of their generating sections implies either
$$
M_1\simeq M_2
\qquad\text{or}\qquad
M_1\otimes M_2\simeq\mathcal O_E(D).
$$
In the first case, $L\simeq\mathcal O_E(x_2)$, and comparison of $L^p$ with $\mathcal O_E(B)$ gives $x_1=x_2$, a contradiction. In the second case,
$$
L^3\simeq\mathcal O_E(x_1+2x_2).
$$
Taking $p$-th powers and using
$$
L^p\simeq\mathcal O_E(x_1+(p-1)x_2)
$$
gives
$$
\mathcal O_E\bigl((p-3)(x_1-x_2)\bigr)\simeq\mathcal O_E.
$$
This is a proper closed condition on the branch data. Hence, for a general cover, the image lines of $\mu_1$ and $\mu_2$ are distinct, and therefore generate $H^0(E,\mathcal O_E(D))$.
\end{proof}

\begin{corollary}\label{cor:generic_finite_PT_map}
Assume either $p\ge5$ and $m\ge2$, or $p=3$ and $m\ge4$, and let
$$
\mathcal{PT}_{p,m}:\mathcal R(p,m)^{\xi}\longrightarrow
\mathcal A_{\frac{m}{2}(p-1)}
$$
be the Prym--Tyurin map defined above. Then $\mathcal{PT}_{p,m}$ is generically finite onto its image.
\end{corollary}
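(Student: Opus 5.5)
The plan is to deduce the corollary from Theorem \ref{thm:surj_codiff_Prym} in two steps. First we show that $\mathcal P_{p,m}$ is generically finite onto its image. Then we transfer this to $\mathcal{PT}_{p,m}$ through the finite forgetful map $\mathcal R(p,m)^{\xi}\to\mathcal R(p,m)$, together with a comparison of $\mathcal{PT}_{p,m}$ with $\mathcal P_{p,m}$.

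For the first step, Theorem \ref{thm:surj_codiff_Prym} says that, under the stated hypotheses on $(p,m)$, the codifferential $d\mathcal P_{p,m}(E,B,L)^\vee$ is surjective at a general point. Equivalently, the differential $d\mathcal P_{p,m}$ is injective there. Surjectivity is an open condition: it is the non-vanishing of a maximal minor of a morphism of vector bundles on $\mathcal R(p,m)$, or on a finite étale chart if one wants to avoid stack issues. So injectivity of the differential holds on a dense open subset $U$. On $U$ the map $\mathcal P_{p,m}$ is unramified, so its fibres through points of $U$ are zero-dimensional, and $\dim \mathcal P_{p,m}(\mathcal R(p,m))=\dim\mathcal R(p,m)=m$. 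By upper semicontinuity of fibre dimension, $\mathcal P_{p,m}$ is generically finite onto its image.

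For the second step, we need the fibres of $\mathcal{PT}_{p,m}$, not those of $\mathcal P_{p,m}$. The key observation is that $P=k^*P(h)$ is the quotient $P(h)/\langle\eta\rangle$ with $\eta=h^*\xi$. As in the proof of Theorem \ref{Theo1}, its principal polarization $\Xi=M$ is the descent of $\Theta'_{P(h)}$ along the isogeny $u:P(h)\to P$, whose kernel is $\langle\eta\rangle\subset K(\Theta'_{P(h)})\simeq(\mathbb Z/p)^2$. Conversely, given $(P,\Xi)$, the polarized variety $(P(h),\Theta'_{P(h)})$ is recovered up to finitely many choices as $u^*\Xi$ on an isogenous cover of degree $p$. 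Concretely, $P(h)$ is determined by a subgroup of order $p$ in $\widehat P[p]$, or dually by a point of $P[p]$, of which there are finitely many. Hence the map $\mathcal{PT}_{p,m}$ factors, up to a finite correspondence on the target, through $\mathcal P_{p,m}\circ\pi$, where $\pi:\mathcal R(p,m)^\xi\to\mathcal R(p,m)$ is finite.

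Granting this, a positive-dimensional fibre of $\mathcal{PT}_{p,m}$ would produce, via the finitely many choices of the isogeny, a positive-dimensional subvariety of $\mathcal R(p,m)^\xi$ mapped by $\mathcal P_{p,m}\circ\pi$ into a finite set. Since $\pi$ is finite, this contradicts the first step on a general fibre. Therefore $\mathcal{PT}_{p,m}$ is generically finite onto its image, and this also justifies the equivalence asserted before Theorem \ref{thm:surj_codiff_Prym}.

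I expect the main obstacle to be making the recovery of $(P(h),\Theta'_{P(h)})$ from $(P,\Xi)$ rigorous in families. This means checking that the set of degree-$p$ isogenous covers carrying a polarization of type $(1,\dots,1,p)$ pulled back from $\Xi$ is finite, and that this assignment defines a finite correspondence between the relevant moduli spaces. A secondary point is the passage from the infinitesimal statement to generic finiteness, which requires working on a smooth finite cover of $\mathcal R(p,m)$ where the tangent identification $T\mathcal R(p,m)\simeq H^1(E,T_E(-D))$ holds at general points.
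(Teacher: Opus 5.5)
Your proposal is correct and follows the paper's argument: the surjectivity of the codifferential in Theorem \ref{thm:surj_codiff_Prym} gives injectivity of $d\mathcal P_{p,m}$ at a general point, hence $\mathcal P_{p,m}$ is generically finite onto its image, and this is then transferred to $\mathcal{PT}_{p,m}$. Your second step is a genuine improvement in rigour: you write $(P,\Xi)$ as the descent of $(P(h),\Theta'_{P(h)})$ along $P(h)\to P(h)/\langle h^*\xi\rangle$ and observe that only finitely many degree-$p$ isogenies land on a given $P$. The paper justifies the transfer only by the finiteness of $\mathcal R(p,m)^{\xi}\to\mathcal R(p,m)$, which by itself does not relate the two different targets $\mathcal A^{\delta}_{\frac m2(p-1)}$ and $\mathcal A_{\frac m2(p-1)}$.
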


\begin{proof}
Since $\mathcal R(p,m)^{\xi}\to\mathcal R(p,m)$ is finite, the map $\mathcal{PT}_{p,m}$ is generically finite onto its image if and only if the Prym map
$$
\mathcal P_{p,m}:\mathcal R(p,m)\longrightarrow \mathcal A^\delta_{\frac{m}{2}(p-1)},
\quad \delta=(1,\dots,1,p),
$$
is generically finite onto its image. By Theorem \ref{thm:surj_codiff_Prym}, the codifferential of $\mathcal P_{p,m}$ is surjective at a general point whenever $p\ge5$ and $m\ge2$, or $p=3$ and $m\ge4$. Hence the differential of $\mathcal P_{p,m}$ is injective at a general point, and therefore $\mathcal P_{p,m}$ is generically finite onto its image. The same conclusion holds for $\mathcal{PT}_{p,m}$.
\end{proof}

\begin{proposition}\label{prop:generic_PT_not_Jac}
Assume that either $p\ge5$ and $m\ge2$, or $p=3$ and $m\ge4$. Then a general Prym--Tyurin variety in the family defined by $\mathcal{PT}_{p,m}$ is not isomorphic as a ppav  to  a Jacobian of a smooth curve.
\end{proposition}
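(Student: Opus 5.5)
The natural strategy is a dimension count: compare the dimension of the image of $\mathcal{PT}_{p,m}$ with the dimension of the intersection of the Jacobian locus with the image. By Corollary \ref{cor:generic_finite_PT_map}, the image has dimension $\dim\mathcal R(p,m)^{\xi}=\dim\mathcal M_{1,m}=m$. The target is $\mathcal A_{g}$ with $g=\frac{m}{2}(p-1)$, and the Jacobian locus $\mathcal J_g$ has dimension $3g-3$ for $g\ge 2$. A naive count does not suffice, because $m<3g-3$ in every case. We therefore need a structural obstruction rather than a dimension obstruction.

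The plan is to exploit the group action. Every $(P,\Xi)$ in the family carries a $\mathbb Z_p$--action preserving $\Xi$. This action is induced from the deck group of $h$ acting on $P(h)$. It descends to $P=P(h)/\langle\eta\rangle$, since $\eta=h^*\xi$ is invariant, and it acts without nonzero fixed tangent vectors, because $T_0P\simeq (H^0(C,\omega_C)^-)^\vee$ contains only nontrivial characters. Suppose a general member satisfied $P\simeq JY$ as ppavs. Then, by Torelli, the automorphism group of $JY$ as a ppav is $\mathrm{Aut}(Y)$ or $\mathrm{Aut}(Y)\times\{\pm1\}$. Since $p$ is odd, the $\mathbb Z_p$ lifts to an action on $Y$. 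This forces $Y$ to be a curve with a $\mathbb Z_p$--action whose quotient has genus $0$: the invariant part of $H^0(\omega_Y)$ is $H^0(\omega_{Y/\mathbb Z_p})$, and it vanishes. Moreover, the eigenspace dimensions of the action on $H^0(\omega_Y)$ must coincide with those on $H^0(C,\omega_C)^-$, namely $h^0(E,M_i)=\deg M_i$.

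Next comes the key count. Cyclic $p$--covers $Y\to\mathbb P^1$ with $n$ branch points form a family of dimension $n-3$. By Riemann--Hurwitz, $g(Y)=\frac{(n-2)(p-1)}{2}$. Setting $g(Y)=\frac{m}{2}(p-1)$ gives $n=m+2$, so such curves $Y$ move in a family of dimension $m-1$. Hence the locus of Jacobians carrying a $\mathbb Z_p$--action with the prescribed eigenspace data has dimension at most $m-1<m$. This family is a finite union over the discrete choices of monodromy. Since the image of $\mathcal{PT}_{p,m}$ is irreducible of dimension $m$, it cannot be contained in this locus. Therefore a general member is not a Jacobian.

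The main obstacle is justifying that the isomorphism $P\simeq JY$ transports the $\mathbb Z_p$--action compatibly, so that the general member really lands in the Hurwitz-type locus. One must also ensure the Torelli lift behaves for the $-1$ ambiguity and for the case when $Y$ is hyperelliptic. Oddness of $p$ handles the lift: an element of order $p$ in $\mathrm{Aut}(Y)\times\{\pm1\}$ lies in $\mathrm{Aut}(Y)$. We should also check that the fixed-point-free action on $T_0P$ forces the quotient $Y/\mathbb Z_p$ to be rational, as argued above. If the dimension bound is delicate for small $m$, I would fall back on comparing the full eigenspace multiplicities $\deg M_i$ against the Chevalley–Weil formula for $Y\to\mathbb P^1$ to rule out matching data directly.
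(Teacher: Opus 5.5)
Your proposal is correct and follows essentially the same route as the paper: the order-$p$ automorphism with no invariant differentials lifts via Torelli to a cyclic $p$-cover $Y\to\mathbb P^1$ (you handle the $\pm1$ ambiguity more carefully than the paper does, using that $p$ is odd). Riemann--Hurwitz then forces $m+2$ branch points, and the resulting $(m-1)$-dimensional Jacobian locus cannot contain the $m$-dimensional image of $\mathcal{PT}_{p,m}$; the one small imprecision is calling that image irreducible, but the count works component by component, which is all you need.
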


\begin{proof}
Our family $\mathcal P$ has dimension $m$, since the Prym--Tyurin map is generically finite. Fix a generic $P\in\mathcal P$. By construction, $P$ carries an automorphism $\sigma$ of order $p$ whose action on $H^0(P,\Omega^1_P)$ has no trivial part. If $P\simeq JC$ for some smooth curve $C$ of genus $g=\dim P$, then by Torelli $\sigma$ is induced by an automorphism of $C$, still denoted $\sigma$, of order $p$. Since
$$
H^0(C,\omega_C)^{\langle\sigma\rangle}\;=\;H^0(P,\Omega^1_P)^{\langle\sigma\rangle}\;=\;0,
$$
the quotient $C/\langle\sigma\rangle$ has genus $0$, hence $C$ is a cyclic degree $p$ cover of $\mathbb P^1$.

In our situation one has
$$
g=\dim P=\frac{m}{2}(p-1).
$$
Let $\pi:C\to\mathbb P^1$ be the quotient map, and let $r$ denote the number of branch points of $\pi$ (with total ramification, as in the cyclic case). Then by the Riemann--Hurwitz formula one concludes that $r=m+2$. Therefore the moduli of such cyclic $p$--covers has dimension $r-3=m-1$.

Consequently, the locus of Jacobians that could arise this way has dimension strictly smaller than $\dim\mathcal P=m$. Hence a generic $P\in\mathcal P$ cannot be a Jacobian.
\end{proof}

\subsection{The Abel--Prym map}\label{subsec:abel_prym}

Fix a curve $\widetilde C$ as in Theorem \ref{Theo1} and let $g\in X\setminus\{a\}$. Set
$$
P_g:=k_g^*P(h_g)\subset J\widetilde C.
$$
By Theorem \ref{Theo1}, $P_g$ is a Prym--Tyurin variety of exponent $p$ for $\widetilde C$, endowed with the polarization induced by $\widetilde\Theta$. Following \cite{Coelho2021}, the Abel--Prym map associated with an abelian subvariety $A\subset J\widetilde C$ is the morphism
$$
\varphi_A:=N_A\circ \alpha_{\widetilde C}:\widetilde C\longrightarrow A,
$$
where $\alpha_{\widetilde C}$ is the Abel map and $N_A$ is the norm endomorphism of $A$. We denote by $\varphi_{P_g}$ the Abel--Prym map of $P_g$.

\begin{proposition}\label{prop:degree_abel_prym}
For every $g\in X\setminus\{a\}$, the Abel--Prym map
$$\varphi_{P_g}:\widetilde C\longrightarrow P_g$$
is finite onto its image and has degree $p$. Moreover, it factors through the quotient $k_g:\widetilde C\to C_g$.
\end{proposition}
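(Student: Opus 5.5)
The plan is to write the norm endomorphism of $P_g$ explicitly in terms of the pullback and the intermediate norms, and read off both the factorization and the degree from that formula.

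\textbf{Step 1: a formula for $N_{P_g}$.} Since $P_g$ has exponent $p$, we have $N_{P_g}=\imath_{P_g}\psi_{\widetilde\Theta_{P_g}}\widehat\imath_{P_g}\phi_{\widetilde\Theta}$, where $\psi=p\,\phi^{-1}$. I would show that
$$
N_{P_g}=k_g^*\circ N_{P(h_g)}'\circ \Nm_{k_g},
$$
with $N'_{P(h_g)}$ the norm endomorphism of $P(h_g)\subset JC_g$. On $P(h_g)$, whose exponent is $p$, we have $N'_{P(h_g)}=p-h_g^*\Nm_{h_g}$. To check the formula, note that $\Nm_{k_g}\circ k_g^*=p$ on $JC_g$. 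Then the right-hand side restricted to $P_g$ is multiplication by $p$, and it vanishes on the complement of $P_g$: that complement is the sum of $f^*JE$ and the other $P_{g'}$ together with $k_g^*$ of nothing, so its image under $\Nm_{k_g}$ lies in $h_g^*JE$, where $N'$ vanishes. Since the norm endomorphism is characterized by these two properties (it equals $e_{P_g}$ on $P_g$ and $0$ on the complement), the formula follows from Corollary~\ref{cor:transitivity} and Lemma~\ref{lemma:kernel_mu}.

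\textbf{Step 2: factorization and finiteness.} With the formula in hand,
$$
\varphi_{P_g}=k_g^*\circ N'_{P(h_g)}\circ \alpha_{C_g}\circ k_g,
$$
up to translation, since $\Nm_{k_g}\circ\alpha_{\widetilde C}=\alpha_{C_g}\circ k_g$. This gives the claimed factorization through $k_g$. It remains to show that
$$
\psi:=k_g^*\circ N'\circ\alpha_{C_g}:C_g\to P_g
$$
is birational onto its image, so that $\deg\varphi_{P_g}=p\cdot\deg\psi=p$. Finiteness is automatic, because a nonconstant map from a curve into an abelian variety is finite onto its image.

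\textbf{Step 3 (main obstacle): $\psi$ has degree one.} The map $N'\circ\alpha_{C_g}$ is the classical Abel--Prym map of the ramified cyclic cover $h_g$. Explicitly, $x\mapsto px-h_g^*h_g(x)=\sum_{j}(x-\sigma^jx)$ up to a constant, where $\sigma$ generates $\mathrm{Gal}(h_g)$. Suppose $\psi(x)=\psi(y)$ for $x\neq y$ generic. Then
$$
\sum_j (x-\sigma^j x)-\sum_j(y-\sigma^j y)\in\ker k_g^*=\langle\eta_g\rangle,
$$
which is $p$-torsion. Multiplying by $p$, we get that $p\big(px-h_g^*h_gx\big)-p\big(py-h_g^*h_gy\big)$ is principal. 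I would argue via linear equivalence on $C_g$. The divisor $p^2x-p\,h_g^*h_g(x)$ is linearly equivalent to $p^2y-p\,h_g^*h_g(y)$. This moves $x$ in a pencil contradicting generic gonality, using that $g(C_g)=\tfrac m2(p-1)+1$ is large relative to the needed linear series. Alternatively, one can use the injectivity of Abel--Prym maps for ramified cyclic covers (cf.\ \cite{Coelho2021}), because $\eta_g$ contributes only finitely many exceptional coincidences. If the linear-equivalence route is too delicate for small $m$, the fallback is a cohomology-class computation. The class $\varphi_{P_g*}[\widetilde C]$ equals $\deg(\varphi_{P_g})\cdot[\text{image}]$, while Welters' criterion gives $[\varphi_{P_g}(\widetilde C)]\cdot$ equal to $p\,\Xi^{d-1}/(d-1)!$. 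Comparing these two expressions shows that $\deg\varphi_{P_g}=p$ exactly, by using $\widetilde\Theta|_{P_g}\equiv p\Xi$ and computing $\varphi_{P_g*}[\widetilde C]=p\cdot$ (minimal class) from $N_{P_g}=p$ on $P_g$.
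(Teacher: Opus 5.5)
\textbf{The proposal has a genuine gap, and the statement itself appears to be false as formulated.}

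\textbf{Step 1 is off by a factor of $p$.} On $P_g=k_g^*P(h_g)$ the composite $k_g^*\circ N'_{P(h_g)}\circ\Nm_{k_g}$ acts as $p^2$, not $p$. This is because $\Nm_{k_g}\circ k_g^*=p$ and $N'_{P(h_g)}$ already acts as $p$ on $P(h_g)$. What actually holds is $p\,N_{P_g}=k_g^*\circ N'_{P(h_g)}\circ\Nm_{k_g}$. So you only get that $[p]\circ\varphi_{P_g}$ factors through $k_g$, and you cannot divide by $p$.

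\textbf{Why $\varphi_{P_g}$ itself does not factor.} Since $g$ preserves $\widetilde\Theta$ and $P_g$ and acts trivially on $P_g$, you have $N_{P_g}\circ g=N_{P_g}$. Hence $\varphi_{P_g}(gx)=\varphi_{P_g}(x)+c$, where $c=N_{P_g}([gx-x])$ is a constant in $P_g[p]$. Factoring through $k_g$ means exactly $c=0$. If $g$ had a fixed point you could evaluate there and get $c=0$, but $k_g$ is \'etale. In fact $c\neq0$. From the isotypical decomposition one checks $N_{P_g}=\sum_i g^i-\sum_i a^i$, where $\sum_i a^i=k_a^*\Nm_{k_a}=N_{f^*JE}$ because $C_a$ has genus one. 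So $c=-k_a^*[\bar g(\bar x)-\bar x]$ with $\bar x=k_a(x)$. This is nonzero: $\bar g$ acts on $C_a$ as translation by a nonzero $p$-torsion point, and $k_a^*$ is injective since $k_a$ is ramified.

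\textbf{The factorization contradicts Theorem~\ref{Theo1}.} Write $\widetilde\Theta_{P_g}\equiv p\,\Xi$ with $\Xi$ principal, and let $d=\dim P_g$. Suppose $\varphi_{P_g}=\bar\varphi\circ k_g$. Welters' criterion gives $\varphi_{P_g*}[\widetilde C]=p\,\Xi^{d-1}/(d-1)!$, hence $\bar\varphi_*[C_g]=\Xi^{d-1}/(d-1)!$. Matsusaka--Ran then forces $P_g\simeq JC_g$, which is impossible since $\dim P_g=g(C_g)-1$. So the factorization the paper imports from the results on Abel--Prym maps of isotypical components cannot hold in this \'etale situation either.

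\textbf{Step 3 does not close the gap.} $C_g$ is a $p$-sheeted cover of $E$, so it carries a $g^1_{2p}$ and no gonality contradiction is available. Moreover, for small $m$ every divisor of degree $2p^2$ moves. The cohomological fallback only yields $\deg(\varphi_{P_g})\cdot[\Gamma]=p\,\Xi^{d-1}/(d-1)!$ for $\Gamma=\varphi_{P_g}(\widetilde C)$, which does not force degree $p$.

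\textbf{In fact the degree is $1$.} Primitivity of the minimal class gives $\deg\varphi_{P_g}\in\{1,p\}$. Degree $p$ would make $\Gamma$ a smooth genus-$d$ curve with $J\Gamma\simeq P_g$ induced by the inclusion. But $\Gamma+c=\Gamma$, so translation by $c$ would induce a fixed-point-free automorphism of $\Gamma$ acting trivially on $J\Gamma$. That is impossible for $d\ge2$. Hence $\deg\varphi_{P_g}=1$, and the statement as formulated cannot be proved.
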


\begin{proof}
Fix $g\in X\setminus\{a\}$ and write $g=a^{-j}b$ for a unique $j\in\{0,\dots,p-1\}$. By the proof of Proposition \ref{prop:iso_decomposition}, $P_g$ is the isotypical component of $J\widetilde C$ corresponding to the rational representation $W_{\omega\otimes\omega^{j}}$ of $G_p$. Since
$$\ker(\omega\otimes\omega^{j})=\langle a^{-j}b\rangle=\langle a^{p-j}b\rangle,$$
the kernel of $W_{\omega\otimes\omega^{j}}$ is $\langle g\rangle$. By \cite[Theorem~9]{Coelho2021}, there is a factorization
$$\varphi_{P_g}=\bar\varphi_{P_g}\circ k_g,$$
where $\bar\varphi_{P_g}:C_g\to P_g$ has image $\varphi_{P_g}(\widetilde C)$. Since $P_g$ is a Prym--Tyurin variety of exponent $p$ for $\widetilde C$ and $|\langle g\rangle|=p$, \cite[Theorem~11(i)]{Coelho2021} implies that $\bar\varphi_{P_g}$ is the normalization of its image and $\deg(\varphi_{P_g})=p$.
\end{proof}

Let $\varphi_{P(h_g)}:=N_{P(h_g)}\circ\alpha_{C_g}:C_g\to P(h_g)$ be the Abel--Prym map associated with $P(h_g)\subset JC_g$, and set $u_g:=k_g^*|_{P(h_g)}:P(h_g)\to P_g$. By the proof of Theorem~\ref{Theo1}, $\ker u_g=\langle\eta_g\rangle\subset P(h_g)[p]$. Hence there is an isogeny $\mu_g:P_g\to P(h_g)$ such that
$$\mu_g\circ u_g=[p]_{P(h_g)},\qquad u_g\circ\mu_g=[p]_{P_g}.$$
Let $\bar\varphi_{P_g}^*:JC_g\to P_g$ be the homomorphism induced by $\bar\varphi_{P_g}$. The factorization of $\varphi_{P_g}$ and the identity $\Nm_{k_g}\circ\alpha_{\widetilde C}=\alpha_{C_g}\circ k_g$ give $N_{P_g}=\bar\varphi_{P_g}^*\circ\Nm_{k_g}$. Moreover,
$$N_{P_g}\circ k_g^*=k_g^*\circ N_{P(h_g)},$$
since both sides vanish on $h_g^*JE$ and coincide on $P(h_g)$, which generate $JC_g$ up to isogeny. Using $\Nm_{k_g}\circ k_g^*=[p]_{JC_g}$, we obtain
$$[p]_{P_g}\circ\bar\varphi_{P_g}^*=k_g^*\circ N_{P(h_g)}.$$
Composing with $\mu_g$ and cancelling multiplication by $p$ in $\Hom(JC_g,P(h_g))$ gives $\mu_g\circ\bar\varphi_{P_g}^*=N_{P(h_g)}$. Consequently, for compatible base points, we have the commutative diagram
$$
\begin{tikzcd}[column sep=large]
\widetilde C \arrow[r,"\varphi_{P_g}"] \arrow[d,"k_g"'] & P_g \arrow[d,"\mu_g"]\\
C_g \arrow[r,"\varphi_{P(h_g)}"'] & P(h_g).
\end{tikzcd}
$$
For arbitrary base points, the diagram commutes up to translation. This allows us to obtain the following result.

\begin{proposition}\label{prop:generic_embedding}
Assume $m\ge 2$. Then the morphism
$$\bar\varphi_{P_g}:C_g\longrightarrow P_g$$
is birational onto its image and immersive at a general point of $C_g$. Equivalently, it restricts to an embedding on a nonempty open subset of $C_g$.
\end{proposition}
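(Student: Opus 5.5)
Proposition \ref{prop:degree_abel_prym} already shows that $\varphi_{P_g}$ has degree $p$ and factors as $\bar\varphi_{P_g}\circ k_g$. Since $\deg k_g=p$, the map $\bar\varphi_{P_g}:C_g\to\varphi_{P_g}(\widetilde C)$ has degree one, so $\bar\varphi_{P_g}$ is birational onto its image. Equivalently, \cite[Theorem~11(i)]{Coelho2021} states that $\bar\varphi_{P_g}$ is the normalization of its image. It remains to show that $\bar\varphi_{P_g}$ is immersive at a general point. A birational morphism from a smooth curve onto its image is an isomorphism over the smooth locus of the image, and that locus is a dense open subset. Over it the map is an embedding, which gives the last sentence of the statement.

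For the immersion I would argue directly rather than through the normalization statement. The commutative diagram before the proposition gives $\mu_g\circ\bar\varphi_{P_g}=\varphi_{P(h_g)}$ up to translation. Since $\mu_g$ is an isogeny and hence étale, $d\bar\varphi_{P_g}$ vanishes at $x\in C_g$ if and only if $d\varphi_{P(h_g)}$ vanishes at $x$. Therefore it suffices to show that the Abel--Prym map $\varphi_{P(h_g)}=N_{P(h_g)}\circ\alpha_{C_g}$ is immersive at a general point. The differential of $\alpha_{C_g}$ at $x$ is the canonical map, which sends the tangent direction to the evaluation functional $\omega\mapsto\omega(x)$ on $H^0(C_g,\omega_{C_g})$. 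Composing with $N_{P(h_g)}$ projects this functional onto the anti-invariant summand
\[
H^0(C_g,\omega_{C_g})^-=\bigoplus_{i=1}^{p-1}H^0(E,M_i).
\]
So $\varphi_{P(h_g)}$ is immersive at $x$ if and only if some anti-invariant differential does not vanish at $x$. This is equivalent to $x$ not being a base point of the linear system $H^0(C_g,\omega_{C_g})^-$.

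The main point is to show that this linear system is not identically zero at a general point. This holds because $\dim H^0(C_g,\omega_{C_g})^-=\frac m2(p-1)\ge1$ for $m\ge2$, and a nonzero space of sections has only finitely many common zeros. Concretely, for $m\ge2$ each $M_i$ has positive degree on $E$ (shown before Lemma \ref{lem:combinatorial_degrees}), so $H^0(E,M_i)\neq0$. A nonzero section $s$ of $M_i$ pulls back to a nonzero differential on $C_g$, and this differential vanishes only over the finitely many zeros of $s$ and possibly over the branch points. Hence $\varphi_{P(h_g)}$, and therefore $\bar\varphi_{P_g}$, is immersive off a finite set.

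I expect the main care to be needed in checking that $\mu_g\circ\bar\varphi_{P_g}$ agrees with $\varphi_{P(h_g)}$ only up to translation. Translations do not affect differentials, so this causes no trouble.
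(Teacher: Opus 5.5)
Your proposal is correct and follows essentially the paper's route: you transfer the immersion question through the isogeny $\mu_g$ to the base locus of $H^0(C_g,\omega_{C_g})^-$, and you take birationality from the degree/normalization statement of Proposition~\ref{prop:degree_abel_prym}. The only difference is that you drop the paper's case split ($m\ge3$ versus $m=2$), observing instead that a single nonzero anti-invariant differential has only finitely many zeros. This is a legitimate simplification.
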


\begin{proof}
By Section \ref{section5} we have
$$H^0(C_g,\omega_{C_g})^-=\bigoplus_{i=1}^{p-1} H^0(E,M_i),\qquad M_i=L^{i}\!\left(-\left\lfloor\frac{i}{p}B\right\rfloor\right).$$
The codifferential of $\varphi_{P(h_g)}$ identifies $H^0(P(h_g),\Omega^1_{P(h_g)})$ with $H^0(C_g,\omega_{C_g})^-$. Since $\mu_g$ is an isogeny, its codifferential is an isomorphism, and the identity $\varphi_{P(h_g)}=\mu_g\circ\bar\varphi_{P_g}$ gives
$$\ima\bigl(d\bar\varphi_{P_g}^{\vee}\bigr)=H^0(C_g,\omega_{C_g})^-.$$
Thus the differential of $\bar\varphi_{P_g}$ vanishes at $x\in C_g$ if and only if $x$ is a base point of this subspace.

Let $x\in C_g$ be a general point outside the ramification locus of $h_g$ and set $y=h_g(x)\in E$. If $m\ge3$, there exists an index $i$ such that $\deg(M_i)\ge2$. Since $M_i$ is base--point--free, there is a section $s\in H^0(E,M_i)$ with $s(y)\neq0$. The corresponding differential in $H^0(C_g,\omega_{C_g})^-$ does not vanish at $x$.

If $m=2$, then $\deg(M_i)=1$ for every $i$, and each $H^0(E,M_i)$ is generated by a section vanishing at a unique point $y_i\in E$. Away from the ramification locus, the corresponding differential vanishes only on $h_g^{-1}(y_i)$. Hence a general point of $C_g$ is not a base point of $H^0(C_g,\omega_{C_g})^-$.

Therefore $\bar\varphi_{P_g}$ is immersive at a general point. Since it is the normalization of its image, it restricts to an embedding on a nonempty open subset of $C_g$.
\end{proof}

\begin{corollary}
Assume $m\ge2$. Then the Abel--Prym map $\varphi_{P_g}:\widetilde C\to\varphi_{P_g}(\widetilde C)$ is generically an \'etale covering of degree $p$, and $C_g$ is the normalization of the Abel--Prym curve $\varphi_{P_g}(\widetilde C)\subset P_g$.
\end{corollary}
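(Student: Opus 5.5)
The plan is to derive the corollary directly from Proposition \ref{prop:degree_abel_prym} and Proposition \ref{prop:generic_embedding}. By Proposition \ref{prop:degree_abel_prym}, there is a factorization
$$\varphi_{P_g}=\bar\varphi_{P_g}\circ k_g,$$
where $k_g:\widetilde C\to C_g$ is the \'etale cyclic cover of degree $p$ defined by $\eta_g$, as established in the proof of Theorem \ref{Theo1}. The same proposition gives $\deg\varphi_{P_g}=p$ onto its image.

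First, I show that $C_g$ is the normalization of the Abel--Prym curve. By Proposition \ref{prop:generic_embedding}, $\bar\varphi_{P_g}:C_g\to \bar\varphi_{P_g}(C_g)=\varphi_{P_g}(\widetilde C)$ is birational onto its image. Since $C_g$ is smooth and $\bar\varphi_{P_g}$ is finite (a nonconstant map from a projective curve), the universal property of normalization identifies $C_g$ with the normalization of $\varphi_{P_g}(\widetilde C)$. This also follows directly from \cite[Theorem~11(i)]{Coelho2021}, as quoted in the proof of Proposition \ref{prop:degree_abel_prym}.

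Second, I show that the covering is generically \'etale. Choose the nonempty open subset $U\subset C_g$ from Proposition \ref{prop:generic_embedding} on which $\bar\varphi_{P_g}$ restricts to an embedding, i.e. an isomorphism onto a locally closed subset $V$ of the image curve. I will shrink $U$ if necessary, removing finitely many points, so that $V$ is open in $\varphi_{P_g}(\widetilde C)$, lies in the smooth locus, and satisfies $\bar\varphi_{P_g}^{-1}(V)=U$. This is possible because a birational finite map is an isomorphism over a dense open subset of the target. Over $V$, the map $\varphi_{P_g}$ is then isomorphic to $k_g|_{k_g^{-1}(U)}:k_g^{-1}(U)\to U$, which is \'etale of degree $p$ because $k_g$ is \'etale everywhere. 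Hence $\varphi_{P_g}$ is generically an \'etale covering of degree $p$, and the degree is consistent with Proposition \ref{prop:degree_abel_prym}.

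The only delicate point is the shrinking step that makes $V$ open with $\bar\varphi_{P_g}^{-1}(V)=U$. Proposition \ref{prop:generic_embedding} only gives immersivity at a general point together with birationality. Since the map is finite and birational, its non-isomorphism locus is finite, so removing the finitely many points that lie over singular points of the image, or over points with several preimages, settles the issue.
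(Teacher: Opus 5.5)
Your proposal is correct and follows the paper's own argument. It combines the factorization $\varphi_{P_g}=\bar\varphi_{P_g}\circ k_g$, with $k_g$ \'etale of degree $p$ (Proposition~\ref{prop:degree_abel_prym}), with the birationality of $\bar\varphi_{P_g}$ onto its image (Proposition~\ref{prop:generic_embedding}), and simply makes explicit the standard facts the paper leaves implicit: a finite birational map from the smooth curve $C_g$ is the normalization, and it is an isomorphism over a dense open subset of the image, over which $\varphi_{P_g}$ is identified with the \'etale map $k_g$.
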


\begin{proof}
By Proposition~\ref{prop:degree_abel_prym}, the map $\varphi_{P_g}$ factors as $\bar\varphi_{P_g}\circ k_g$, where $k_g:\widetilde C\to C_g$ is an \'etale covering of degree $p$ and $\bar\varphi_{P_g}$ is the normalization of its image. The result follows from Proposition~\ref{prop:generic_embedding}.
\end{proof}

\bibliographystyle{amsalpha-inits}
\bibliography{references}

\end{document}